\newtheorem{thm}{Theorem}[section]
\newtheorem{coro}[thm]{Corollary}  
\newtheorem*{thm*}{Theorem}   
\newtheorem*{lemma*}{Lemma}
\newtheorem{prop}[thm]{Proposition}
\newtheorem{lemma}[thm]{Lemma}
\newtheorem{rem}[thm]{Remark}
\newtheorem{ex}[thm]{Example}
\newtheorem*{ex*}{Example \ref{exJ2} (continued)}
\newtheorem*{XJC}{$XJC$-Principle}
\newtheorem*{Ex:Ce/e3}{Example \ref{Ex:Ce/e3} (continued)}
\newcommand{\Sing}{\operatorname{Sing}}
\newcommand{\p}{{\mathbb P}}
\newcommand{\Sec}{\operatorname{Sec}}
\newcommand{\rk}{\operatorname{rk}}
\newcommand{\map}{\dasharrow}
\newcommand{\sat}{\operatorname{sat}}
\renewcommand{\th}{\operatorname{th}}
\newcommand{\reg}{\operatorname{reg}}
\def\S{Section~}
\newcommand{\End}{\operatorname{End}}
\newcommand{\Rad}{\operatorname{Rad}}
\newcommand{\Hess}{\operatorname{Hess}}
\renewcommand{\ln}{\operatorname{ln}}
\renewcommand{\ss}{\operatorname{ {\rm ss}}}
\def\og{\leavevmode\raise.3ex\hbox{$\scriptscriptstyle\langle\!\langle$~}}
\def\fg{\leavevmode\raise.3ex\hbox{~$\!\scriptscriptstyle\,\rangle\!\rangle$}}
\newcommand{\incl}[1][r]
  {\ar@<-0.2pc>@{^(-}[#1] \ar@<+0.2pc>@{-}[#1]}
\newcommand{\eq}[1][r]
   {\ar@<-3pt>@{-}[#1]
    \ar@<-1pt>@{}[#1]|<{}="gauche"
    \ar@<+0pt>@{}[#1]|-{}="milieu"
    \ar@<+1pt>@{}[#1]|>{}="droite"
    \ar@/^2pt/@{-}"gauche";"milieu"
    \ar@/_2pt/@{-}"milieu";"droite"}
\begin{document}

\title{Extremal varieties 3-rationally connected by cubics,\\ quadro-quadric Cremona transformations \\  and rank 3 Jordan algebras}  

\author{Luc PIRIO  AND Francesco RUSSO} 
\address{Luc PIRIO, IRMAR, UMR 6625 du CNRS, Universit\' e Rennes1, Campus de beaulieu, 
35000 Rennes, France}
\email{luc.pirio@univ-rennes1.fr} 
\address{Francesco RUSSO, Dipartimento di Matematica e Informatica, Universit\`a degli Studi di Catania, Viale A. Doria 6,  95125  Catania, Italy}
\email{frusso@dmi.unict.it}

\begin{abstract} 
For any $n\geq 3$, we prove that there are  equivalences between \smallskip 
\begin{itemize}
\item  irreducible $n$-dimensional non degenerate projective varieties $X\subset \mathbb P^{2n+1}$ different from rational normal scrolls and  3-covered  by rational cubic curves, up to projective equivalence;\smallskip 
\item quadro-quadric Cremona transformations of $ \mathbb P^{n-1}$, up to linear equivalence;\smallskip 
\item  $n$-dimensional complex Jordan algebras of rank three, up to isotopy.\medskip 
\end{itemize}

We also provide some applications to the classification of particular
classes of varieties in the class defined above and of quadro-quadric Cremona transformations,
proving also  a structure theorem for these
birational maps and for varieties 3-covered by twisted cubics by reinterpreting 
for these objects the solvability of the radical of a Jordan algebra.
\end{abstract}

\maketitle

\section*{Introduction}
In this paper we continue the study began in \cite{PR} of  the unexpected relations between the following three sets: $n$-dimensional complex Jordan algebras of rank three modulo isotopy;  irreducible $n$-dimensional projective varieties $X\subset\p^{2n+1}$ such
that through three general points there passes a twisted cubic contained in it modulo projective equivalence; quadro-quadric Cremona transformations in $\p^{n-1}$ modulo linear equivalence.\smallskip

Jordan algebras have been introduced  by physicists around 1930 in the attempt of  discovering 
a non-associative algebraic setting for quantum mechanics. These algebras found later applications  in many different areas of mathematics, spanning from  Lie algebras and group theory to real and complex differential geometry, see for example \cite[Part I]{McCrimmon-book} for a general panorama. 
In algebraic geometry, complex Jordan algebras of rank three were used to construct projective varieties with notable geometric properties
either by considering some determinantal varieties associated to the simple finite dimensional ones
such as Severi varieties, see \cite[IV.4.8]{Zak},  
or by defining the so called {\it twisted cubic over a rank three Jordan algebra}, see 
\cite{Freudenthal}, \cite{mukai} \cite[\S 4]{PR},  
\cite{landsbergmanivel2},\cite{landsbergmanivel3} and \S 3 below. 
These last objects are examples of projective varieties such that through three general points 
there passes a twisted cubic contained in it
and they also appear as the first exceptional 
examples to the classification of  extremal varieties $m$--covered by rational 
 curves of fixed degree, see \cite{PT} and \cite{PR} for definitions and examples and also \S 1 and
\S 3. Moreover, twisted cubic over rank three Jordan algebras 
are also  examples of varieties with one apparent double point,
 see \cite{mukai}, \cite[Corollary 5.4]{PR} and \cite{CR}, and the smooth ones 
are also {\it Legendrian} varieties, see \cite{mukai} and \cite{landsbergmanivel3}.

Quadro-quadric Cremona transformations can be considered as the simplest examples of  birational maps of a projective space different from  linear automorphisms.
In the plane these transformations are completely classified and together with projective automorphisms generate the group of birational maps of $\p^2$. In low dimension they were studied classically by the Italian school, see for example  \cite{Conforto} and the references therein, and soon later by Semple \cite{semple}.
These results were reconsidered recently in \cite{PRV}, where the classification in $\p^3$ originally outlined in \cite{Conforto} is completed, see also  \cite{brunoverra}.
In \cite{EinShB} it is proved the surprising and nice  result that there are only four examples of quadro-quadric Cremona transformations with smooth irreducible base locus.
 These four examples  are related to  the so called  Severi varieties
and are linked to the four simple complex Jordan algebras of  hermitian $3\times 3$ matrices 
with coefficients in the complexification of
the four real division algebras $\mathbb R$, $\mathbb C$, $\mathbb H$ and $\mathbb O$, see 
\cite{EinShB}, \cite{Zak}, and also \cite{EKP}, \cite{CS} and  Corollary \ref{C:chaputsab} here. \medskip

The main results of the paper, collected in Theorem  \ref{T:main} and in the related diagram,  assert that the three sets described  above are in bijection and that the composition of two of these bijections is the identity map. This correspondence, which we call ``{\it $XJC$-correspondence}'',  was conjectured in the final remarks of \cite{PR} and it is based on
the following results: every quadro--quadric Cremona transformation of $\p^{n-1}$ is linearly equivalent to an involution which is the adjoint
of a rank 3 Jordan algebra of dimension $n$ (Theorem \ref{P:jordanXf}); every irreducible $n$-dimensional variety $X^n\subset\p^{2n+1}$ which is 3--covered by twisted
cubics and different from a rational normal scroll is projectively equivalent to a twisted cubic over a rank three complex Jordan algebra (Theorem  \ref{T:XJ}). Some particular versions of the $XJC$-correspondence are the following: cartesian products of varieties $3$--covered by twisted cubics
correspond to direct product Jordan algebras of rank three 
and to the so called elementary quadratic transformations (Proposition \ref{P:product}); 
smooth varieties 3--covered by twisted cubics, modulo projective equivalence,
 are in bijection with semi-simple rank three  Jordan algebras, modulo isotopy, 
and with {\it semi--special}
 quadro-quadric Cremona transformations, modulo linear equivalence (Theorem \ref{T:simple}).\smallskip 

The $XJC$--correspondence is extended in \S \ref{S:generalizedXJC} to cover some degenerated cases: rational normal scrolls, Jordan algebras with a cubic norm
 and  `fake' quadro-quadric Cremona transformations, respectively. Moreover, the $XJC$--correspondence leads us to some new constructions and definitions. The theory of the radical  and 
the semi--simple part of a Jordan algebra suggested  the definitions of  semi--simple part,
semi--simple rank and semi-simple dimension  of
a quadro-quadric Cremona transformation, or of an extremal variety 3--covered by twisted cubics, providing for instance a general Structure Theorem for these maps,
see Theorem \ref{T:structureBir22}. As an application we prove in Corollary \ref{C:chaputsab} that every homaloidal
polynomial $f$  of degree 3 defining a quadro-quadric Cremona transformation 
whose  ramification locus scheme is cut out by $f$ is,  modulo linear equivalence,
the norm of a rank 3 semi-simple Jordan algebra,
 providing a new short proof of \cite[Theorem 3.10]{EKP} 
and of  \cite[Theorem 2, Corollary 4]{CS}. 
\medskip 

The paper is organized as follows. 
In \S 1 we introduce some notation which is not standard. In \S 2 we define precisely the objects studied giving some examples: the $X$-world consisting of extremal varieties $3$--covered by twisted cubics; the $C$--world consisting of quadro-quadric Cremona transformations and the $J$--world consisting of rank three complex Jordan algebras. Moreover the natural equivalence relations: projective equivalence, linear equivalence, respectively isotopy are introduced as well the notion of cubic Jordan pair. In \S 3 we define the correspondences between the three sets modulo equivalences. First from the $J$--world to the $X$ and $C$ worlds. Then from the $C$--world to the $X$--world. We prove the  equivalence between the $C$ and $J$ worlds in Theorem \ref{P:jordanXf} while the equivalence between the $X$ and $J$ worlds is proved in Theorem \ref{T:XJ}. The $XJC$-correspondence and its particular 
forms recalled above are stated in Section 4 while \S 5 is devoted to the Structure Theorem of quadro-quadric 
Cremona transformation, Theorem \ref{T:structureBir22}, and to the reinterpretation of the theory of the solvability of the radical of a Jordan algebra in the $C$--world and in the $X$--world.

\section{Notation}

If $V$ is a  complex vector space of finite dimension and if $A\subset V$ is a subset,
then $\langle A\rangle$  denotes the smallest linear subspace of $V$ containing $A$, analogous
notions being defined in $\p(V)$.
  The  projective equivalence class  of  $x\in V\setminus \{0\}$ is the element $[x]\in\p(V)$.
  Let $P_1,P_2$ be two projective subspaces in $\mathbb P^N$. 
When $P_1\cap P_2=\emptyset$, we define their {\it direct sum}  as 
$P_1\oplus P_2=\langle P_1,P_2\rangle \subset \p^N$.
\smallskip 
 
 We shall consider
(irreducible) algebraic varieties defined over the complex field.
If $X$ is an  irreducible algebraic variety and if $n=\dim(X)$, we shall write 
 $X=X^n$ or simply $X^n$.   We denote by $[X]$ the projective equivalence class of an irreducible projective variety $X\subset\p^N$. We shall indicate by 
 {$(X)^m$} the $m$-times cartesian product $X\times\cdots\times X$. We denote by
 $T_x X$ the embedded projective tangent space to $X\subset\p^N$ at a smooth point $x$ o f $ X$ while
$T_{X,x}$ indicates   the abstract tangent space to $X$ at $x$.

The irreducible quadric hypersurface in $\p^{r+1}$ is denoted by $Q^r$ while $v_3(\mathbb P^1)\subset\p^3$
is the twisted cubic curve. 
\smallskip

\section{The objects}

\subsection{The $X$-world:   varieties  $X^n(3,3)$  }
An irreducible  projective variety $X=X^n\subset \mathbb P^N$ is said to be  {\it $3$-rationally connected by cubic curves} ({\it 3-RC by cubics} for short) if for a general  3-uplet of points  $x=(x_i)_{i=1}^3 \in (X)^3$, there exists an irreducible rational cubic curve included in $X$ that passes through $x_1,x_2 $ and $x_3$.

If $X\subset \p^N$ is 3--RC by cubics, then projecting $X$ from a general projective tangent space $T_xX$ we get an irreducible variety $Y^{n-\delta}\subset\p^{N-n-1}$, $\delta\geq 0$, such
that through two general points there passes a line contained in $Y^{n-\delta}$. This immediately implies $Y=\p^{n-\delta}$ so that:
\begin{equation}
\dim \langle X\rangle \leq 2n+1-\delta\leq 2n+1,
\end{equation}
see also  \cite[\S 1.2]{PT} for more general results and formulations.

We will say that a variety $X\subset\p^N$ 3--RC by cubics  is {\it extremal} if $N=
\dim \langle X\rangle= 2n+1$. In what follows, we shall use the notation $X=X^n({3,3})$  when $X\subset\p^{2n+1}$  is an extremal  variety which is 3--RC by cubics.

Thus for  $X=X^n(3,3)\subset\p^{2n+1}$ and for two general points $x_1,x_2\in X$, we have
$$\mathbb P^{2n+1}=\langle X \rangle = T_{x_1}X\oplus T_{x_2}X,$$  
see also \cite[Lemme 1.3]{PT}.\medskip

{\begin{ex} 
\rm{
\label{EX:X(3,3)}
\begin{enumerate}
\item There exists a unique 3-RC curve $X^1(3,3)$: 
the twisted cubic cubic curve $v_3(\mathbb P^1)\subset \mathbb P^3$; \medskip 
\item Let $Q$ be an irreducible hyperquadric in $\mathbb P^n$.
It is well-known that $Q$ is 3-RC by conics and since $\mathbb P^1$ 
is 3-covered by lines(!),
it immediately follows that the Segre product ${\rm Seg}(\mathbb P^1\times Q)\subset\p^{2n+1}$ 
is 3-RC by cubics so that  ${\rm Seg}(\mathbb P^1\times Q)=X^n(3,3)$. These examples produce a family of $X^n(3,3)$ for every $n\geq 2$; 
 \medskip 
\item  Let $(\Pi_i)_{i=1}^3$ be a  3-uple of elements of the grassmannian variety  $\mathbb G(2,5)=G_3(\mathbb C^6)\subset \mathbb P^{19}$,   Pl\"ucker embedded.  If  the $\Pi_i$'s are general, one can find a basis $(u_i)_{i=1}^6$ of $\mathbb C^6$ such that $\Pi_1=u_1\wedge u_2\wedge u_3$,  $\Pi_2=u_4\wedge u_5\wedge u_6$
and $\Pi_3=(u_1+u_4)\wedge (u_2+u_5)\wedge ( u_3+u_6)$.  Then $ s\mapsto (u_1+s u_4)\wedge (u_2+s u_5)\wedge ( u_3+ s u_6)$ extends to a morphism $\varphi: \mathbb P^1\rightarrow G_3(\mathbb C^6)$ such that $\varphi(0)=\Pi_1, \varphi(\infty)=\Pi_2$ and $\varphi(1)=\Pi_3$.  The curve $\varphi(\mathbb P^1)\subset\mathbb G(2,5)\subset\mathbb P^{19}$ in  the 
Pl\"ucker embedding is a twisted cubic, showing that $\mathbb G(2,5)=X^{9}(3,3)$; 
\medskip 
 \item 
 The $n$-dimensional rational normal scrolls  $S_{1\ldots 13}\subset\p^{2n+1}$, $n\geq 1$, and $
 S_{1\ldots 122}\subset\p^{2n+1}$, $n\geq 2$, are classical examples of $X^n(3,3)$, which we shall call {\it degenerated examples},
 see \cite{PT} for the explanation of the terminology and also
 Section   \ref{S:fromXworld} below.
 \end{enumerate}}
\end{ex}}

We shall denote  by $\boldsymbol{X}^n(3,3)$ the set of irreducible non-degenerate varieties $X^n\subset \mathbb P^{2n+1}$ which are 3-RC by twisted cubics and which are not  degenerated  in the above sense, {\it i.e.} that are different from  $S_{1\ldots 13}$ or $
 S_{1\ldots 122}$.  The description of  the projective equivalence classes of 
  elements in $\boldsymbol{X}^n(3,3)$   is a natural geometrical problem already considered in \cite{PR}, see also \cite{PT} for general
  classification results of this kind. Indeed, this problem naturally appears when trying to solve the 
 question on which  maximal rank webs are algebraic,  a central problem in web geometry, see \cite{PTweb}. \smallskip 

Remind that if $X\in \boldsymbol{X}^n(3,3)$, we shall denote by $[X]$ its projective equivalence class.

\subsection{The $C$-world:  Cremona transformations of bidegree $(2,2)$}
\label{S:S-C}

Let $f: \mathbb P^{n-1}\dashrightarrow \mathbb P^{n-1}$ be a rational map. 
There exist  a unique integer $d\geq 1$ and $f_i \in |\mathcal O_{\mathbb P^{n-1}}(d) |$, $i=1,\ldots, n,$ with ${\rm gcd}(f_1,\ldots,f_n)=1$ such that 
$$
f(x)=\big[f_1(x):\cdots: f_n (x)    \big]
$$
for $x\in \mathbb P^{n-1}$ outside the {\it base locus scheme} $\mathcal B=V(f_1,\ldots, f_n)\subset\p^{n-1}$ of $f$. 
By definition, the {\it degree} of $f$ is $\deg(f)=d$.
 We will denote by $F: \mathbb C^{n}\rightarrow \mathbb C^{n}$  the
 homogeneous affine {polynomial} map 
defined by $ F(x)=(f_1(x),\ldots, f_n (x))$   
for $x\in \mathbb C^{n}$. Note that the projectivization of $F$ is  of course
 the rational map $f$ and that  {$F$} depends on $f$ only up to multiplication by a nonzero constant.

A rational  map $f: \mathbb P^{n-1}\dashrightarrow \mathbb P^{n-1}$ is  {\it birational} (or is a {\it Cremona transformation}) if it admits a rational inverse $f^{-1}: \mathbb P^{n-1}\dashrightarrow \mathbb P^{n-1}$. In this case, one defines  
the {\it bidegree} of $f$ as ${\rm bideg}(f)=(\deg f, \deg f^{-1})$.  In this paper we will mainly consider  {\it quadro-quadric} Cremona transformations, 
that is   Cremona transformations of bidegree $(2,2)$.   The set of such birational maps of $\p^{n-1}$ will be indicated by ${\bf Bir}_{2,2}(\mathbb P^{n-1})$.

\begin{ex} \rm{
\label{EX:bir}
\begin{enumerate}
\item The {\it standard  involution} of $\mathbb P^{n-1}$ is the birational map
$$
[x_1:x_2:\ldots: x_n]\longmapsto \big[x_2x_3\ldots x_n : x_1x_3\ldots x_n:\ldots:x_1x_2\ldots x_{n-1}  \big].
$$
It has bidegree $(n-1,n-1)$ and it  is an {\it involution}, that is $f=f^{-1}$ or equivalently  $f\circ f$ is equal to  the identity  of $\mathbb P^{n-1}$ as a rational map; \medskip 
\item 
\label{EX:bir-fake}
Assume that $ x\mapsto (\ell_0(x),\ldots,\ell_n(x))$ is a linear automorphism of $\mathbb C^n$. Then for any nonzero linear form $\ell:\mathbb C^n\rightarrow \mathbb C$,  the map $ x\mapsto [\ell(x)\ell_0(x):\cdots : 
\ell(x)\ell_n(x)]$ is  a birational map. With the previous definitions it is a birational map of bidegree $(1,1)$ but we shall consider such a map  as a {\it  fake quadro-quadric Cremona transformation}, see Section \ref{S:generalizedXJC}; 
\medskip 

\item  Let $Q^{n-1}\subset\p^n$ be an irreducible hyperquadric. Given $p\in Q_{\reg}$, the projection 
 from $p$ induces a birational map $\pi_p : Q\dashrightarrow \mathbb P^{n-1}$.   For $p,p'\in Q_{\reg}$
with 
$p'\not\in T_pQ$, the composition  $\pi_{p'}\circ \pi_p^{-1}:\p^{n-1}\map\p^{n-1}$
 is a birational map of bidegree $(2,2)$, called  an  {\it elementary quadratic transformation};
\medskip

\item\label{ex:rank}  Let $J$ be a  finite dimensional power-associative algebra. The {\it inversion}  $x\dashrightarrow x^{-1}$ induces a birational involution  
$j:\p(J)\map\p(J)$.  If $J$ has rank $r$, see \ref{S:Jworld} for the definitions, then $j$ is of bidegree  $(r-1,r-1)$. 
\end{enumerate}
}
\end{ex}

For simplicity, 
we denote  by  $V$ the vector space $\mathbb C^n$ in the lines below.\smallskip

Let $f_1,\ldots, f_n $ and $g_1,\ldots, g_n $ be quadratic forms on $V$ defining  the affine polynomial maps 
$F=(f_1,\ldots, f_n):V\to V $, respectively  $G=(g_1,\ldots, g_n):V\to V $. Let $f:\p^{n-1}\map\p^{n-1}$, respectively  $g:\p^{n-1}\map\p^{n-1}$, be the induced rational maps.
Then $g=f^{-1}$ as rational maps if and only if  there exist homogeneous cubic forms $N,M\in {\rm Sym}^3(V^*)$ such that, for every $x,y\in V$: 
\begin{equation}
\label{E:bir22}
G\big( F(x)   \big)=N(x)\, x \qquad \mbox{ and } \qquad 
F\big( G(y)   \big)=M(y)\, y.
\end{equation}
 
In the previous case, one easily verifies that  for every $x,y\in V$, we also have 
\begin{equation}
\label{E:bir22-2}
M\big( F(x)   \big)=N(x)^2 \qquad \mbox{ and } \qquad 
N\big( G(y)   \big)=M(y)^2.
\end{equation}

 Two Cremona transformations  $f, \tilde f: \mathbb P^{n-1}\dashrightarrow \mathbb P^{n-1}$ are said to be {\it linearly equivalent} (or just {\it equivalent} for short) if there exist  projective transformations  $\ell_1,\ell_2: \mathbb P^{n-1}\rightarrow \mathbb P^{n-1}$ such that $ \tilde f= \ell_1 \circ f\circ  \ell_2$.  
This is an equivalence relation on 
  ${\bf Bir}_{2,2}(\mathbb P^{n-1})$ and in the sequel we  shall investigate the quotient space ${\bf Bir}_{2,2}(\mathbb P^{n-1}){\!\big/ \!\!\!
{\tiny{\begin{tabular}{l}
$linear$ \vspace{-0.08cm}  \\
$equivalence$
\end{tabular}}
}}$  and its various incarnations.\smallskip 

If $f\in {\bf Bir}_{2,2}(\mathbb P^{n-1})$ we will denote by $[f]$ its linear equivalence class.

%%%%%%%%%%%%%%%%%%%%%%%%%%%%%%%%%%%%%%%%%%%%%
%%%%%%%%%%%%%%%%%%%%%%%%%%%%%%%%%%%%%%%%%%%%%
%%%%%%%%%%%%%%%%%%%%%%%%%%%%%%%%%%%%%%%%%%%%%
%%%%%%%%%%%%%%%%%%%%%%%%%%%%%%%%%%%%%%%%%%%%%
\subsection{The $J$-world: Jordan algebras  and Jordan pairs of degree 3}
${}^{}$ \medskip 
\label{S:Jworld}

By definition, a {\it Jordan algebra} is a commutative complex algebra $\mathbb J$  with a unity $e$ such that the  {\it Jordan identity}
\begin{equation}
\label{E:jordan}
x^2(xy)=x(x^2y)
\end{equation}
holds for every $x,y\in \mathbb J$ (see \cite{jacobson, McCrimmon-book}).  Here we shall also assume that $\mathbb J$ is
finite dimensional.
It is well known that a Jordan algebra is power-associative.
 By definition, the {\it rank} $\rk(\mathbb J)$  of  $\mathbb J$ 
is the complex dimension of the (associative) subalgebra  
$\langle x \rangle $ of $ \mathbb J$ spanned by the unity $e$ and by
 a general element $x\in  \mathbb J$. A  general element
 $x\in \mathbb J$ is invertible, {\it i.e.} for $x$ in an open nonempty subset 
of $\mathbb J$, there exists a unique 
$x^{-1}\in \langle x\rangle$ such that $xx^{-1}=e=x^{-1}x$.

\begin{ex} \rm{
\label{EX:jordan}
\begin{enumerate}
\item 
Let $A$ be a non-necessarily commutative associative algebra with a unity.  Denote by $A^+$ the vector space $A$  with the symmetrized product $a\cdot a'=\frac{1}{2}(aa'+a'a)$.  Then $A^+$ is a Jordan algebra. Note that   $A^+=A$ if  $A$ is commutative.\medskip 
\item  
\label{Ex:rank2}
Let $q:W\to\mathbb C$ be a quadratic form on the vector space $W$. For $(\lambda,w),(\lambda',w')\in \mathbb C\oplus W$, the product $(\lambda,w)*(\lambda',w')=(\lambda \lambda'-q(w,w'),\lambda w'+\lambda'w)$  induces a structure of rank  2 Jordan algebra on 
$\mathbb C\oplus W$ with unity $e=(1,0)$.
 \medskip 
\item
\label{L:Herm3}
 Let $\mathbb A$ be the complexification of one of the four Hurwitz's algebras $\mathbb R,\mathbb C,\mathbb H$ or $\mathbb O$ and denote by  ${\rm Herm}_3(\mathbb A)$ the algebra of Hermitian $3\times 3$ matrices with coefficients in $\mathbb A$: 
\begin{equation*}
 {\rm Herm}_3(\mathbb A)= \bigg\{    
\begin{tiny} \begin{pmatrix}
  r_1 & \overline{x_3} &  \overline{x_2} \\x_3 & r_2 &   \overline{x_1} 
\\ x_2& x_1& r_3
 \end{pmatrix}
 \end{tiny}
 \; \Big| 
 \begin{small}
 \begin{tabular}{c}
 $ x_1,x_2,x_3 \in \mathbb A $\\
$r_1,r_2,r_3 \in \mathbb C$
\end{tabular}
\end{small}
\bigg\} .
\end{equation*}
Then the symmetrized product $M\bullet N=\frac{1}{2}(MN+NM)$ induces on   ${\rm Herm}_3(\mathbb A)$ a structure of rank 3  Jordan algebra.
\end{enumerate}}
\end{ex}

A Jordan algebra of rank 1 is isomorphic to $\mathbb C$ (with the standard multiplicative product). It is a classical result that any rank 2 Jordan algebra is isomorphic to an algebra as in Example \ref{EX:jordan}.(\ref{Ex:rank2}). 
In this paper, we will mainly consider Jordan algebras of rank 3.  
These are the simplest Jordan algebras which have not been yet classified in arbitrary dimension. 
\smallskip

Let $\mathbb J$ be a rank 3 Jordan algebra. The general theory specializes in this case and ensures the existence of a linear form $T:\mathbb J\rightarrow \mathbb C$ (the {\it generic trace}), of a quadratic form $S\in {\rm Sym}^2(\mathbb J^*)$ and of a cubic form 
$N\in {\rm Sym}^3(\mathbb J^*)$  (the {\it generic norm}) such that 
\begin{equation}
\label{E:GenPoly}
x^3-T(x)x^2+S(x)x-N(x)e=0
\end{equation}
for every $x\in\mathbb  J$.  Moreover, 
$x$ is invertible in $\mathbb J$ if and only if $N(x)\neq 0$ and in this case  
$x^{-1}={N(x)}^{-1}{x^\#}$, where $x^\#$ stands for the {\it adjoint} of $x$ defined by 
$x^\#=x^2-T(x)x+S(x)e$. The adjoint  satisfies the identity:
$$
\big(x^\#\big)^\#=N(x)x.
$$
\begin{ex}\rm{
\begin{enumerate}
\item   
The algebra $M_3(\mathbb C)$ of $3\times 3$ matrices with complex entries is associative. 
Then  $M_3(\mathbb C)^+$ is a rank 3 Jordan algebra. If $M\in M_3(\mathbb C)$, the generic trace of $M$ is the usual trace,  the norm is the determinant of $M$ and the adjoint is the classical one, that is the transpose of the cofactor matrix of $M$.\smallskip 
\item  Let $\mathbb C\oplus W$ be a rank 2 Jordan algebra as defined in Example \ref{EX:jordan}.(\ref{Ex:rank2}). For $x=(\lambda,w)\in
\mathbb C\oplus W$, one has a trace  $T(x)=3\lambda$  and a quadric norm $ N(x)=\lambda^2+q(w)$ such that $x^2-T(x)x+N(x)e=0$. Then one defines the adjoint by $x^\#=(\lambda,-w)$. In the rank 2 case, one has $(x^\#)^\#=x$.  
\smallskip 
 \item  Let $\mathbb A$ be as in Example \ref{EX:jordan}.(\ref{L:Herm3}).  Since $\mathbb A$ is the complexification of a Hurwitz's algebra, it comes with a  non-degenerate quadratic form $ \Vert\cdot \Vert^2 : \mathbb A\rightarrow \mathbb C$ that is multiplicative.  If $\langle \cdot, \cdot \rangle$ stands for its polarization, then  the generic norm on ${\rm Herm}_3(\mathbb A)$ can be computed obtaining:  
\begin{equation}
\label{E:CubicNorm}
 N\bigg(
  \begin{tiny} \begin{matrix}
  r_1 & \overline{x_3} &  \overline{x_2} \\x_3 & r_2 &   \overline{x_1} 
\\ x_2& x_1& r_3
 \end{matrix}
 \end{tiny}\bigg)=r_1r_2r_3+2\langle x_1x_2,x_3\rangle-r_1 \, \Vert x_1 \Vert^2-r_2\,  \Vert x_2 \Vert^2-r_3 \, \Vert x_3\Vert^2
\end{equation}
 for every $x_1,x_2,x_3\in \mathbb A$,  $r_1,r_2,r_3 \in \mathbb C$.\smallskip 
 \item \label{ex:Spampinato} {\rm Let $J$ be a power-associative algebra. Also in this case one can define the notions of rank, adjoint $x^\#$,
norm $N(x)$ and trace and the theory is completely analogous to the previous one. Let $r=\rk(J)\geq 2$.
The adjoint satisfies the identity $(x^\#\big)^\#=N(x)^{r-2}x$  thus its projectivization 
is a birational involution  of bidegree $(r-1,r-1)$ of $\mathbb P(J)$, see also Example \ref{EX:bir}, \ref{ex:rank}.

 The inverse map
$x\mapsto x^{-1}={N(x)}^{-1}{x^{\#}}$  on $J$   naturally induces a birational involution 
$\widetilde \jmath:\p(J\times\mathbb C)\map
\p(J\times \mathbb C)$ of bidegree $(r,r)$, defined by $\widetilde \jmath([x,r])=[rx^{\#},N(x)]$.
Such  maps were classically investigated by  N. Spampinato and C. Carbonaro Marletta,
see \cite{Spampinato,Carbonaro1,Carbonaro2}, producing examples of interesting Cremona involutions
in higher dimensional projective spaces. It is easy to see that letting $\widetilde J=J\times\mathbb C$, then for $(x,r)\in\widetilde J$
one has $(x,r)^{\#}=(rx^{\#},N(x))$ so that the map $\widetilde \jmath$  is the adjoint map
of the algebra $\widetilde J$. A Cremona transformation 
 of bidegree  $(r,r)$ 
will be called {\it of Spampinato type} if it is linearly equivalent to the adjoint of a 
direct product  $  J\times\mathbb C$ where $J$ is a power-associative algebra of rank $r$.

The previous  construction will be used in \cite{PR3} to produce
  some interesting Cremona involutions and 
to describe differently some  known examples. In \S \ref{S:fromJ} and in \S \ref{S:XtoC}  maps of this type will naturally
appear in relation to tangential projections of twisted cubics over rank three Jordan algebras, respectively 
extremal varieties $3$--covered by twisted cubics.}
 \end{enumerate}\smallskip
 }
\end{ex}

The set of complex Jordan algebras of dimension $n$ will be denoted by $ \boldsymbol{ {\mathcal J }\!ordan}^n$ while
$
\boldsymbol{ {\mathcal J }\!ordan}^n_3$ will indicate  the subset formed by the elements having rank equal to 3.
Here we will focus on   the description of $
\boldsymbol{ {\mathcal J }\!ordan}^n_3$ up to a certain equivalence relation that we now introduce.

\subsubsection{\bf Isotopy}  
Let  $\mathbb J$  be a Jordan algebra.  By definition, the {\it quadratic operator}  associated to an element $x\in   \mathbb J$ is the endomorphism $U_x=2\, L_x\circ L_x-L_{x^2}$ of $\mathbb J$ 
where $L_x$ stands for the multiplication by $x$ in $\mathbb J$.
If $u\in \mathbb J$ is invertible, one defines the {\it $u$-isotope} $\mathbb J^{(u)}$ of $\mathbb J$ as the algebra structure on $\mathbb J$ induced by  the product $\bullet^{(u)}$  defined by 
$$
x \bullet^{(u)} y= \frac{1}{2} U_{x,y}(u), 
$$
where as usual $U_{x,y}=   
U_{x+y}-U_x-U_y$ is  the linearization of the {\it quadratic representation}
$P:V\to\End(V),\, x\mapsto P(x)=U_x$ of $\mathbb J$
(the name is justified by
 the fact that $P$ is a homogenous polynomial map of degree 2).
Then $u^{-1}$ is a unity for  the new product $\bullet^{(u)}$
 and moreover $\mathbb J^{(u)}$
 is a Jordan algebra,  
the {\it $u$-isotope} of $\mathbb J$.  Let us recall that $x\in\mathbb J$
is invertible if and only if $U_x$ is invertible; moreover  $x^{-1}=U_x^{-1}(x)$
and $U_{x^{-1}}=U_x^{-1}$ in this case.
 \smallskip

Two Jordan algebras $\mathbb J$ and ${\mathbb J}'$ are called {\it isotopic} if $\mathbb J'$ is isomorphic to an isotope $\mathbb J^{(u)}$ of $\mathbb J$.  One immediately proves that the rank is invariant by isotopy.
 The norm  $N^{(u)}(x)$ and the adjoint $x^{\#(u)}$ of an element $x\in\mathbb J^{(u)}$ 
are expressed in terms of  the norm $N(x)$ and the adjoint $x^\#$ in the algebra $\mathbb J$ in the following way, see \cite[II.7.4]{McCrimmon-book}:
 \begin{equation}
 \label{E:NormAdjointUisotop}
 N^{(u)}(x)=N(u)N(x) \qquad \mbox{and} \qquad x^{\#(u)}= N(u)^{-1} U_{u^\#}(x^\#). \medskip 
 \end{equation}
 
 If $\mathbb J$ is a Jordan algebra, then we shall denote by  $[\mathbb J]$  its isotopy class.  \smallskip
 
  Of course, {\it isotopy} defines a equivalence relation on $
\boldsymbol{ {\mathcal J }\!ordan}^n$ and  hence on  $
\boldsymbol{ {\mathcal J }\!ordan}^n_3$ since the rank is isotopy-invariant. In this paper, we are interested in the description of the quotient space $
\boldsymbol{ {\mathcal J }\!ordan}^n_3/_{isotopy}$.\smallskip 

 The concept of `Jordan pair' is  a useful  notion to deal with Jordan algebras up to isotopy. We   introduce it in the next section. This notion will be used later in section \ref{S:XtoJ}. 

\subsubsection{\bf Jordan pairs}  By definition  (see \cite{loos}), a {\it Jordan pair}  is a pair  $V=(V^+,V^-)$ of complex vector spaces together with quadratic maps 
(for $\sigma=\pm $)
$$Q^\sigma: V^\sigma\rightarrow {\rm Hom}(V^{-\sigma},V^\sigma)\, , 
$$ 
 satisfying  the following relations  for every $(x,y)\in V^\sigma \times V^{-\sigma}$: 
$$ 
D^\sigma_{x,y}Q^\sigma_x=   Q^\sigma_x D^{-\sigma}_{y,x} \, , \qquad 
D^\sigma_{Q^\sigma_x(y),y}=    D^{\sigma}_{x, Q^{-\sigma}_x(y)}   
  \qquad 
\mbox{and} \qquad 
Q^{\sigma}_{Q^\sigma_x(y)} =   Q^{\sigma}_x Q^{-\sigma}_y Q^{\sigma}_x, 
$$
 where $D^{\sigma}_{x,y}\in {\rm End}(V^{\sigma} )$ is  
defined by $
D^{\sigma}_{x,y}(z)=Q^\sigma_{x+z}(y)-Q^\sigma_{x}(y)-Q^\sigma_{z}(y)$ for every $z\in V^\sigma$.

\begin{ex} {\rm 
\begin{enumerate}
\item 
Let $\mathbb J$ be a Jordan algebra. Then $V=(\mathbb J,\mathbb J)$  with quadratic operators $Q^\pm_x=U_x$ for  every $x\in \mathbb J$,   is a Jordan pair. By definition, it is the {\it Jordan pair associated to $\mathbb J$};
\medskip 
\item  Given  integers $p,q>0$, the pair  $V=(M_{p,q}(\mathbb C),M_{q,p}(\mathbb C))$  together with the quadratic operators defined by $Q^\sigma_x(y)=x \cdot  y\cdot x$ (where $\cdot$ designates the usual matrix product) is a Jordan pair. 
 \end{enumerate}
}
\end{ex}

By definition, a {\it morphism} between two Jordan pairs $(V^+,V^-)$ and $(\overline V^+,\overline V^-)$ with respective associated quadratic operators $Q^\sigma $ and ${\overline Q}^{\, \sigma} $, is a pair $h=(h^+,h^-)$ of linear maps 
$h^\sigma: V^\sigma \rightarrow \overline V^\sigma$ such that, for all  $\sigma=\pm$ and every $(x,y)\in V^\sigma\times V^{-\sigma}$, one has 
$$
h^\sigma\big(Q^\sigma_x(y)\big)={\overline Q}^{\, \sigma}_{h^\sigma(x)}\big(h^{-\sigma}(y)\big).
$$ 
{\it Isomorphisms} and {\it automorphisms} of Jordan pairs are defined in the obvious way.

An element $u\in V^{-\sigma}$  is said to  be {\it invertible} if  $Q^{-\sigma}_u$ is invertible (as a linear map from $V^{\sigma}$ into $V^{-\sigma}$).  In this case,  one 
 verifies that  the product 
 $$
x\bullet x':=\frac{1}{2}Q^\sigma_{x,x'}(u) =\frac{1}{2} \Big(Q^\sigma_{x+x'}(u)-Q^\sigma_{x}(u)-Q^\sigma_{x'}(u)
\Big)
$$
induces on $V^{\sigma}$ a Jordan algebra structure with unit $(Q^{-\sigma}_u)^{-1}(u)\in V^{\sigma}$. This Jordan algebra is noted by $V_u^{\sigma}$. 
Then it can be proved that $V$ is isomorphic  to the Jordan pair associated to  $V_u^\sigma$. This gives an equivalence between Jordan algebras up to isotopies and  Jordan pairs admitting  invertible elements up to isomorphisms, see \cite{loos}.

\section{Equivalences}

In this section, we establish some equivalences between the three mathematical worlds introduced above.

\subsection{Starting from the $J$-world}\label{S:fromJ}

Let $\mathbb J$ be a Jordan algebra of dimension $n$ and of rank 3. Following Freudenthal in \cite{Freudenthal}, one defines the {\it twisted cubic} over $\mathbb J$ as the Zariski closure $X_\mathbb J$ of the image of the affine embedding
\begin{align*}
\mu_{\mathbb J}: \mathbb J & \longrightarrow     \mathbb P\big(\mathbb C\oplus    \mathbb J \oplus \mathbb J \oplus \mathbb C\big)\\
x  & \longmapsto    \big[ 1: x : x^\#:N(x) \big].
\end{align*}
It is  known  that $X_{\mathbb J}\subset\p^{2n+1}$ belongs to the class $\boldsymbol{X}^n(3,3)$, see for example \cite[\S 4.3]{PR}. We shall provide below other  proofs of this fact,
see Proposition \ref{P:Xn33f}. \smallskip   

Let $\mathbb J^{(u)}$ be the $u$-isotope of $\mathbb J$ relatively to an invertible element   $u\in \mathbb J$.  Let $\ell_u$ be the linear automorphism of  $ \mathbb P(\mathbb C\oplus    \mathbb J \oplus \mathbb J \oplus \mathbb C)=\mathbb P^{2n+1}$  defined by 
$$
 \ell_u\big(  [s: X:Y:t  ]\big)=\big[ s: X: N(u)^{-1}U_{u^\#}(Y): N(u)t\big].
 $$  
 It follows from (\ref{E:NormAdjointUisotop}) that, as affine maps from $\mathbb J=\mathbb J^{(u)}$ to $\mathbb P^{2n+1}$, one has 
$\mu_{\mathbb J^{(u)}}=\ell_u\circ \mu_{\mathbb J}$.  Hence the projective varieties $X_{\mathbb J}$ and 
$X_{\mathbb J^{(u)}}$ are projectively equivalent.  Therefore the association $\mathbb J\rightarrow X_{\mathbb J}$ factorizes and induces a  well defined application 
\begin{align*}
{\boldsymbol {\mathcal J \!ordan}_{3}^n}_{\! \!\big/  isotopy}  & \longrightarrow  \;\,   \boldsymbol { X}^n(3,3)_{\!\big/ \!\!\!
{\tiny{\begin{tabular}{l}
$projective$\\
$equivalence$
\end{tabular}}
}} \\
\big[   \mathbb J  \big] \quad & \longmapsto \quad \big[  X_{\mathbb J}  \big]. 
\end{align*}
\smallskip

Similarly, since $x^{\#(u)}=N(u)^{-1}U_{u^\#}(x^\#)$, the linear equivalence class of the birational map ${ \#_{\mathbb J}}: [x]\dashrightarrow [x^\#]$ of $\mathbb P^{n-1}$  does not depend on the isotopy class of $\mathbb J$. Hence we also  get a well-defined map
 \begin{align*}
{\boldsymbol {\mathcal J \!ordan}_{3}^n}_{\! \!\big/  isotopy}  & \longrightarrow    \;\, 
{\bf Bir}_{2,2}(\mathbb P^{n-1})_{\!\big/ \!\!\!
{\tiny{\begin{tabular}{l}
$linear$\\
$equivalence$
\end{tabular}}
}}
\\
\big[   \mathbb J  \big] \quad & \longmapsto \quad \big[  { \#_{\mathbb J}}  \big].  %\medskip 
\end{align*}
 \smallskip

 \begin{rem}{\rm 
 The tools used above  to construct the `twisted cubics over Jordan algebras of rank three'  
are the adjoint $x^\#$ and the norm $N(x)$. These notions has been introduced 
for every unital power-associative algebra so that one can
ask  if it were  possible to define  `twisted cubics over  commutative power-associative
 algebras  of rank three with unity.'  
Since 
   a commutative power-associative algebra of rank  three with unity 
is necessarily a Jordan algebra of the same rank, 
according to  \cite[Corollary 13]{ElduqueOkubo}, this generalization
 would not produce new examples.

In the same vein, one could  define  a map associating 
to a rank three power-associative
algebra with unity the  quadro-quadro Cremona transformation given by the linear
equivalence class of its adjoint. As we shall in Theorem \ref{P:jordanXf} below this
generalization is useless since the restriction of this map to Jordan
algebras of rank three will be surjective. Moreover, by applying Theorem \ref{P:jordanXf}
to the adjoint of a commutative power-associative algebra of rank  three with unity one could
deduce a new proof of \cite[Corollary 13]{ElduqueOkubo} mentioned above.}
  \end{rem}

  One verifies easily that $\infty_\mathbb J=[0:{\boldsymbol{ 0}}: {\boldsymbol{ 0}}:1]$ is a smooth point of $X_{\mathbb J}$ and that the homogeneization of
 $\mu_{\mathbb J}$ is the inverse of the birational map $\pi_{\infty_\mathbb J}:  X_{\mathbb J}  \map \mathbb P^n$ given by the restriction to $X $ of the projection from $T_{\infty_\mathbb J }X_{\mathbb J}$, see for example \cite[\S 4]{PR}.
 It is immediate to verify that $0_\mathbb J=\mu_{\mathbb J}(\mathbf 0)=[1:\mathbf 0:\mathbf 0:0]\in X_{\mathbb J}$ is also a  smooth point and that 
 $T_{0_\mathbb J}X_{\mathbb J}$ is the closure of the locus of  points of the form  $ [1:{x} :\mathbf 0:0]$ with   $x\in  \mathbb J$.  Thus the birational map
 $\psi:\p(\mathbb J\times\mathbb C)\map\p(\mathbb J\times\mathbb C)$ given by $\psi([x_0:x])=[x_0x^{\#}:N(x)]$
 is a birational involution of type $(3,3)$ of Spampinato type (see Example \ref{EX:jordan}.(\ref{ex:Spampinato})) and it  is clearly the  composition of the homogenization of $\mu_{\mathbb J}$ with the (restriction to $X_\mathbb J$ of the) linear projection 
 $\pi_{0_\mathbb J}$ from 
 $T_{0_\mathbb J}X_{\mathbb J}$,
 that is $\psi=\pi_{\infty_\mathbb J}\circ\pi_{0_\mathbb J}^{-1}$ as rational maps.
 We shall return on this  in \S \ref{S:XtoJ}.

 \subsection{Starting from the $C$-world}
 \label{S:Cremona}
   Let $f\in  {\bf Bir}_{2,2}(\mathbb P^{n-1})$, let  $g\in  {\bf Bir}_{2,2}(\mathbb P^{n-1})$ be  its inverse, let  $F,G: \mathbb C^n\rightarrow \mathbb C^n$ be  associated
  quadratic lifts and let $N,M$ be the associated cubic forms,  see Section \ref{S:S-C}. 

  \subsubsection{\bf From the $C$-world to the $X$-world}
  \label{S:CtoX}
  
   Let us consider  the following affine embedding 
  \begin{align}
  \label{E:mu-f}
\mu_{f}: \mathbb C^n & \longrightarrow      \mathbb P\big(\mathbb C\oplus \mathbb C^n\oplus \mathbb C^n\oplus \mathbb C\big)=\mathbb P^{2n+1}\\
x  & \longmapsto    \big[ 1: x : F(x):N(x) \big]. \nonumber
\end{align}

 The  Zariski-closure $X_f$ of its  image  is a non-degenerate irreducible  $n$-dimensional subvariety of $\mathbb P^{2n+1}$   containing $0_f=\mu_f({\boldsymbol{ 0}})=[1:{\boldsymbol{ 0}}:{\boldsymbol{ 0}}:0]$. \smallskip 
 
 In order to prove that $X_f$ is 3-covered by twisted cubics,  we shall use in different ways the following crucial result whose incarnations
 in the three worlds we defined till now will be  the starting points of the bridges  connecting these apparently different universes.
 \smallskip
  \begin{lemma} 
 \label{L:crucial} Let notation be as above.
 There exists a bilinear form $B_F: \mathbb C^n\times \mathbb C^n\rightarrow \mathbb C$  such that
   $$
 dN_x=B_F(F(x),dx),
 $$
for every $x\in \mathbb C^n$.
 \end{lemma}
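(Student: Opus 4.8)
The plan is to exploit the homaloidal relation $G(F(x)) = N(x)\,x$ from (\ref{E:bir22}) by differentiating it. First I would differentiate $N(x)^2 = M(F(x))$ from (\ref{E:bir22-2}); this gives $2\,N(x)\,dN_x = dM_{F(x)}\circ dF_x$, so $dN_x$ is visibly a bilinear expression in $F(x)$-derived data — but this still involves $dF_x$ rather than $F(x)$ itself, so it is not yet of the desired shape. The cleaner route is to differentiate the fundamental identity $G(F(x)) = N(x)\,x$ directly. Since $G$ is quadratic, writing $G$ in terms of its associated symmetric bilinear (vector-valued) map $\widetilde{G}$ with $G(y) = \widetilde{G}(y,y)$, one gets $dG_{F(x)}(v) = 2\,\widetilde{G}(F(x),v)$, hence
\[
2\,\widetilde{G}\bigl(F(x),\,dF_x(dx)\bigr) = dN_x\, x + N(x)\, dx .
\]
Taking any fixed linear functional $\lambda$ on $\mathbb{C}^n$ and pairing, one can try to solve for $dN_x$, but the term $N(x)\,dx$ is an obstruction; one wants to eliminate it.

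The key trick is to pair the differentiated identity with a linear form that kills the troublesome term. Here is the cleaner version: apply $d$ to $G(F(x)) = N(x)\,x$ and then contract with the differential $dM$ (or use the second identity). Actually the slickest approach uses the pair of relations together. From $F(G(y)) = M(y)\,y$ and $G(F(x)) = N(x)\,x$, substitute $y = F(x)$ into the first: $F(G(F(x))) = M(F(x))\,F(x) = N(x)^2 F(x)$, while also $F(G(F(x))) = F(N(x)\,x) = N(x)^2 F(x)$ since $F$ is quadratic — consistent but not new. Instead I would proceed as follows: differentiate $N(x)^2 = M(F(x))$ to get $2N(x)\,dN_x = \bigl(dM\bigr)_{F(x)}\bigl(dF_x\bigr)$; since $M$ is a cubic form, $(dM)_{F(x)}$ is a quadratic function of $F(x)$, and $dF_x(dx) = 2\widetilde{F}(x,dx)$ is bilinear in $x$ and $dx$. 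This expresses $N(x)\,dN_x$ as a form of degree $3$ in $F(x)$ times something linear in $dx$ — not quite the claim, but close; one divides by $N(x)$, valid on the dense open set where $N(x)\neq 0$, and then argues by polynomiality that the resulting identity, once shown to have polynomial form, extends.

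The cleanest path, and the one I would actually write, is: from $G\circ F(x) = N(x)x$ differentiate and pair with a generic covector to isolate $dN_x$ as $dN_x = \beta(F(x), dx)$ for some bilinear map $\beta$ depending a priori also on $x$; then observe that since $N$ is a cubic form, $dN_x$ is homogeneous of degree $2$ in $x$, while $F(x)$ is also homogeneous of degree $2$ in $x$, so the map $\beta(F(x),-)$ must be homogeneous of degree $0$ in $x$ after the substitution — forcing $\beta$ to be a genuine bilinear form $B_F$ independent of $x$ on the Zariski-dense locus, hence everywhere. Concretely: $dN_x(v)$ is a sum of monomials of the form $x_i x_j v_k$; each quadratic monomial $x_i x_j$ must be a linear combination of the components of $F(x)$ (this is exactly the content of the homaloidal identity read coordinatewise — $F$'s components, together with the condition $\gcd = 1$ and birationality, span enough of $\mathrm{Sym}^2$), which lets one write $dN_x(v) = B_F(F(x),v)$ with $B_F$ constant. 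The main obstacle is making rigorous this last step — that the quadratic forms appearing as coefficients of $dN$ lie in the span of the components $f_1,\dots,f_n$ of $F$; this is where one genuinely uses that $f$ is a Cremona transformation (so that $G\circ F$ is the scalar $N(x)$ times the identity, forcing each partial $\partial N/\partial x_i$ into the ideal generated by the $f_j$, and then a degree count pins it to the span, not just the ideal). Everything else is bookkeeping with Euler's relation and homogeneity.
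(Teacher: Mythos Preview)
Your final plan is on the right track and matches the paper's strategy in spirit, but the step you flag as ``the main obstacle'' has a genuine gap that a degree count alone does not close.

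Differentiating $g_i(f_1(x),\ldots,f_n(x)) = x_i N(x)$ with respect to $x_j$ does \emph{not} put $\partial N/\partial x_j$ into the ideal $I=\langle f_1,\ldots,f_n\rangle$. What you actually get is
\[
\sum_{k}\frac{\partial g_i}{\partial y_k}\big(F(x)\big)\,\frac{\partial f_k}{\partial x_j}
= \delta_{ij}\,N(x) + x_i\,\frac{\partial N}{\partial x_j},
\]
and since $\partial g_i/\partial y_k$ is linear in $y$, the left side lies in $I_3$. For $j\neq i$ this gives only $x_i\,(\partial N/\partial x_j)\in I_3$, and for $j=i$ the extra $N(x)$ term must be dealt with separately. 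So differentiation gives membership of $\partial N/\partial x_j$ in the \emph{saturation} $I^{\rm sat}$, not in $I$ itself; your ``degree count'' (degree $2$ element of an ideal generated in degree $2$ lies in the span of generators) would apply only after you already know $\partial N/\partial x_j\in I$.

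The missing ingredient is the identification $I_2^{\rm sat}=I_2$. The paper obtains it in two moves: first, by also handling the diagonal case $i=j$ (multiplying $N+x_i\,\partial N/\partial x_i\in I_3$ by $x_i$ and using $x_iN\in I_4$) one gets $x_i^2\,(\partial N/\partial x_j)\in I_4$ for \emph{all} $i,j$, hence $\partial N/\partial x_j\in I_2^{\rm sat}=H^0(\mathcal I_{\mathcal B}(2))$. Second, and this is where birationality is really used, one argues that the linear system cutting out the base scheme $\mathcal B$ is already complete: $H^0(\mathcal I_{\mathcal B}(2))=\langle f_1,\ldots,f_n\rangle$. (If there were an extra quadric through $\mathcal B$, adjoining it would give a rational map to $\mathbb P^n$ still generically one-to-one, contradicting that $f$ is birational onto $\mathbb P^{n-1}$.) Your phrase ``span enough of $\mathrm{Sym}^2$'' gestures at this, but the actual mechanism is saturation plus completeness of the linear system, not a homogeneity or degree argument. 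The earlier attempts in your plan (differentiating $N^2=M\circ F$, or the homogeneity argument forcing $\beta$ to be constant) do not lead to a proof; the content is entirely in this ideal-theoretic step.
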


 \begin{proof} 
 In coordinates, the relation $G(F(x))=N(x)x$ translates into
\begin{equation}
\label{R*} \qquad 
   g_i\big(f_1(x),\ldots,f_n(x)\big)=x_iN(x)\, ,  \qquad  i=1,\ldots,n.
   \end{equation}
   
Let $I=\langle f_1,\ldots, f_n\rangle \subset \mathbb C[x_1,\ldots,x_n]=S=\oplus_{d\geq 0} S_d$
and let $I=\oplus_{d\geq 0} I_d$.
Let us recall that the biggest homogeneous ideal of $S$ defining the scheme $\mathcal B=V(I)$ is the saturated ideal  
$$I^{\sat}=\oplus_{d\geq 0} I^{\sat}_d= \oplus_{d\geq 0} H^0\big(\mathcal I_{\mathcal B}(d)\big).$$

It follows from (\ref{R*}) that 
\begin{equation}
\label{R-l}
\qquad 
x_iN(x)\in (I_2)^2\subseteq I_4  \qquad \mbox{for every } i=1,\ldots,n.
 \end{equation}
   By derivation of   (\ref{R*})  with respect to $x_j$ for $j$ distinct from $i$,     we deduce  that $ x_i({\partial N}/{\partial x_j}) \in I_3 $ yielding 
\begin{equation}
\label{R**}
   \qquad  \qquad 
  x_i^2\frac{\partial N}{\partial x_j}\in I_4
  \qquad \mbox{for every } i,j=1,\ldots,n, \; i\neq j.
   \end{equation}

By derivation of  (\ref{R*}) with respect to $x_i$ we obtain  
$ N(x)+x_i({\partial N}/{\partial x_i})\in I_3$ for every $i=1,\ldots,n$. Multiplying by $x_i$  and using (\ref{R-l}) we deduce 
 $x_i^2({\partial N}/{\partial x_i})\in I_4 $ for every $i$.   Combined with (\ref{R**}), this shows that  
$ x_i^2({\partial N}/{\partial x_j})\in I_4$ for every $i,j=1,\ldots,n$. Then by definition 
\begin{equation}
\label{R-L}
  \frac{\partial N}{\partial x_i} \in I_2^{\sat}
   \end{equation}
 for  every $i=1,\ldots,n$.  Since $
 I_2^{\sat}=H^0(\mathcal I_{\mathcal B}(2))=\mbox{span}\{  f_1,\ldots,f_n \}$,
the last equality being an immediate consequence of the birationality of $f$,  
  there exist constants  $b_{ij}\in \mathbb C$ such that 
 ${\partial N}/{\partial x_i}= \sum_{j=1}^n b_{ij} f_j 
 $ for every $i$. Then letting  $ B_F(x,y)=\sum_{i,j=1}^n b_{ij} x_iy_j$, we have  $  dN_x=B_F(F(x ),dx)
 $ for every~$x\in\mathbb C^n$. 
 \end{proof}
 \smallskip

  We now  provide below two different proofs that 
$X_f\in \boldsymbol{X}^n(3,3)$.  Both are interesting in our opinion: the first one is  more elementary but computational;  the second one
is algebro-geometric and the computations are hidden in some elementary well known facts.
\medskip
   
\begin{prop}\label{P:Xn33f} Let notation be as above. The
variety  $X_f$ belongs to the class $X_f\in \boldsymbol{X}^n(3,3)$:
$X_f$ in non--degenerate in $\p^{2n+1}$, 
is 3--RC by twisted cubics  and is different from  a rational normal scroll.
\end{prop}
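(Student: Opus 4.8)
The statement has three parts. For non-degeneracy, note that $\mu_f$ maps into $\p^{2n+1}$ and that $F(x)=(f_1(x),\dots,f_n(x))$ together with $N(x)$ and the coordinates $x_i$ and the constant $1$ span the homogeneous coordinate ring in degrees $0,1,2,3$ along the affine cone; concretely, if $X_f$ were contained in a hyperplane $a_0 + \sum a_i x_i + \sum b_i f_i(x) + c\, N(x) = 0$ identically in $x$, then comparing homogeneous components forces $a_0=0$, $a_i=0$, $\sum b_i f_i \equiv 0$, $c=0$, and $\sum b_i f_i \equiv 0$ contradicts $\gcd(f_1,\dots,f_n)=1$ together with the birationality of $f$ (the $f_i$ are linearly independent, since they generate $I_2^{\sat}=H^0(\mathcal I_{\mathcal B}(2))$ which has dimension $n$, as used in the proof of Lemma \ref{L:crucial}).

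The main point is that $X_f$ is 3--RC by twisted cubics, and here I would exploit the tangential projection. First I would identify the inverse of the homogenization of $\mu_f$ with the linear projection $\pi_{0_f}$ from $T_{0_f}X_f$: the tangent space $T_{0_f}X_f$ is the closure of the locus $[1:x:0:0]$ (since $F$ and $N$ vanish to order $2$ and $3$ at the origin), and projecting $[s^3:s^2x:s F(x): N(x)]$ from it recovers $[x]\in\p^{n-1}$ (after accounting for the homogenization parameter). Then, using Lemma \ref{L:crucial} to control the behavior of $N$, one sees that the projection of $X_f$ from a general tangent space $T_pX_f$ is all of $\p^n$: indeed $\pi_{0_f}$ already shows $X_f$ is birational to $\p^n$, and the key computation — which is where the work lies — is to show that for a general point $p$, projection from $T_pX_f$ is a \emph{linear} projection realizing the birationality, equivalently that $\dim\langle T_pX_f\rangle = n$ (so $X_f$ has the expected tangential behavior) and the residual map to $\p^n$ is birational with lines pulling back to cubics. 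Lemma \ref{L:crucial} is precisely the identity that makes the second fundamental form / the cubic $N$ compatible with this, so that through $0_f$, $\infty_f=[0:0:0:1]$ and a general third point the rational normal cubic parametrized by $s\mapsto[1:sx:s^2F(x):s^3N(x)]$ (for suitable $x$) lies on $X_f$; running the $\mathrm{PGL}$-action that moves $0_f,\infty_f$ to an arbitrary general pair of points, and checking a general third point can be hit, gives 3--RC.

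Alternatively — and this matches the remark in the excerpt that two proofs will be given — the algebro-geometric route is to observe that the curves $C_x:\ s\mapsto \mu_f(sx)$, i.e. $[1:sx:s^2F(x):s^3N(x)]$, are twisted cubics on $X_f$ through $0_f$ and $\infty_f$ for every $x$ with $N(x)\neq 0$, so $X_f$ is already \emph{2}--RC by twisted cubics through the fixed pair $(0_f,\infty_f)$; then a dimension count, together with transitivity of the relevant group action (the $u$-isotope / linear equivalence action, or directly the group generated by the $\ell_u$'s as in \S\ref{S:fromJ}) on general pairs of points of $X_f$, upgrades this to 3--RC. I would then verify $\dim\langle X_f\rangle=2n+1$ from non-degeneracy, so $X_f$ is extremal, i.e. $X_f\in\boldsymbol{X}^n(3,3)$ provided it is not a scroll.

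\textbf{The scroll exclusion.} Finally, to see $X_f$ is not projectively equivalent to $S_{1\dots13}$ or $S_{1\dots122}$: these two rational normal scrolls are characterized among $X^n(3,3)$ by the fact that a general tangential projection has defect $\delta>0$ (they are the ``degenerated'' cases in the sense of \cite{PT}), equivalently the general entry locus is positive-dimensional, or equivalently their generic tangent spaces are not in general position in a strong sense. For $X_f$, the computation in the previous paragraph shows the general tangential projection is a birational morphism onto $\p^n$ with $\delta=0$, so $X_f$ cannot be one of the two exceptional scrolls. (Equivalently: for the scrolls the associated ``Cremona-type'' map degenerates to bidegree $(1,1)$ or a fake transformation as in Example \ref{EX:bir}.(\ref{EX:bir-fake}), while our $f$ is a genuine quadro-quadric Cremona transformation; one can phrase the exclusion by noting that recovering $f$ from $X_f$ via the two tangential projections $\pi_{0_f}$ and $\pi_{\infty_f}$ gives back a bidegree $(2,2)$ map, which the scrolls do not admit.)

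\textbf{Main obstacle.} The crux is the verification that the general tangential projection of $X_f$ is surjective onto $\p^n$ (hence $\delta=0$ and $X_f$ is both 3--RC and non-degenerated); this is exactly what Lemma \ref{L:crucial} is designed to feed into, and carrying out the projection computation cleanly — choosing good affine coordinates at a general point $p=\mu_f(a)$ and using $dN_x=B_F(F(x),dx)$ to compute $T_pX_f$ and the residual map — is the technical heart of the argument.
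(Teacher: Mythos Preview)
Your proposal has the right ingredients but does not assemble them into a complete argument, and one claim is incorrect.

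\textbf{The curve family is too small.} The curves $s\mapsto[1:sx:s^2F(x):s^3N(x)]$ form only an $(n{-}1)$-parameter family, each passing through $0_f$, $\infty_f$, and the single varying point $\mu_f(x)$. This is \emph{1}-covering relative to the fixed pair $(0_f,\infty_f)$, not ``2-RC through the pair'' as you write. To upgrade this to 3-RC you would need transitivity of a projective automorphism group of $X_f$ on general \emph{pairs} of points, which you do not establish; the reference to the $\ell_u$'s of \S\ref{S:fromJ} is circular, since those are built for $X_{\mathbb J}$ and the whole point here is that we do not yet know $X_f$ is of Jordan type. The paper avoids this by using a richer 2-parameter family built from the \emph{inverse} map: for $\gamma_{a,b}(t)=G(a+tb)/M(a+tb)$ one computes via \eqref{E:bir22}--\eqref{E:bir22-2} that $\mu_f\circ\gamma_{a,b}$ is a twisted cubic through $0_f$, and this family is 2-covering through $0_f$. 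Then only transitivity on \emph{single} points is needed, and this the paper obtains by explicitly writing down, using the bilinear form $B_F$ of Lemma~\ref{L:crucial}, a linear automorphism $\ell_{x_\star}$ of $\mathbb P^{2n+1}$ satisfying $\ell_{x_\star}\circ\mu_f=\mu_f\circ\tau_{x_\star}$ (translation by $x_\star$). You identify this computation as the ``technical heart'' at the end, but you frame it as a tangential-projection calculation rather than as producing an automorphism; the latter is what actually closes the argument.

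\textbf{The scroll exclusion via $\delta>0$ is wrong.} By the bound $\dim\langle X\rangle\le 2n+1-\delta$ recalled in \S2.1, every \emph{extremal} $X^n(3,3)$ --- including the scrolls $S_{1\ldots13}$ and $S_{1\ldots122}$ --- has $\delta=0$. Your alternative exclusion (the scrolls yield a fake quadro-quadric map) is correct and is essentially the paper's argument: the second fundamental form at a general point of $X_f$ is identified with the linear system defining $f$, hence has no fixed component when $f$ is a genuine bidegree-$(2,2)$ Cremona transformation, whereas for the two scrolls it does (cf.\ \S\ref{S:fromXworld}(e)).
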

\begin{proof}[First proof]
For  $a,b\in \mathbb C^n$ with  $M(b)\neq 0$, $\gamma_{a,b}: t\longmapsto    \frac{G(a+tb)}{M(a+tb)}$
 is a well-defined rational map and it follows from  \eqref{E:bir22} and \eqref{E:bir22-2}  that  for $t$ generic, one has
   \begin{align*}
\mu_f\big(\gamma_{a,b}(t)\big)=
 \, \bigg[ 1 :   \frac{G(a+t\,b)}{M(a+t\,b)} :  \frac{F(G(a+t\,b))}{M(a+t\,b)^2} :    \frac{N( G(a+t\,b))}{M(a+t\, b)^3}        \bigg]   
= \, \big[ {M(a+t\,b)}  :   G(a+t\,b) : a+t\,b :    1    \big] .
\end{align*}
Thus $ \mu_f\circ \gamma_{a,b}(\mathbb P^1)$ is a twisted cubic curve passing through $0_f=\mu_f\big(\gamma_{a,b}(\infty)\big)$ and moreover $X_f$ is 2-covered by twisted cubics passing through $0_f$: for  $(p,p')\in (X_f)^2$ general, there exists a twisted cubic  included in $X_f$ and containing   the points $p,p'$ and $0_f$.

Now let $x_\star\in \mathbb C^n$  be such that $N(x_\star)\neq 0$, let   $\tau_{x_\star}$ be the translation by $x_\star$ in $\mathbb C^n$ 
 and consider the linear automorphism of $\mathbb P^{2n+1}$ defined by 
$$
\ell_{x_\star}(\omega)=\big[s: x+s\, x_\star : y+dF_{x_\star}(x)+sF(x_\star):   t+ B_F(y,x_\star)+dN_{x_\star}(x)+sN(x_\star)     \big]
$$
for $\omega=[s:x:y:t    ]\in \mathbb P(\mathbb C\oplus \mathbb C^n\oplus \mathbb C^n\oplus \mathbb C)=\mathbb P^{2n+1}$, where $B_F$ stands for the bilinear form given by Lemma \ref{L:crucial}. One verifies immediately that
$$
\ell_{x_\star} \circ  \mu_f=\mu_f\circ \tau_{x_\star}.
$$

 This shows that the pair $(X_f,\mu_f(x_\star))$ is projectively equivalent to $(X_f,0_f)$ hence $X_f$ is also 2-covered by twisted cubics passing through $\mu_f(x_\star)$. Since this holds for any $x_\star \in \mathbb C^n$ such that $N(x_\star)\neq 0$, this implies that $X_f=X^n(3,3)$. 
The variety $X_f$ is not a rational normal scroll since the linear system of quadrics defining the so called second fundamental
form at a general point has no fixed component since it is naturally identified with the linear system defining $f$, see    \cite[\S 5]{PR} for definitions and details. \end{proof}

\begin{proof}[Second proof of Proposition \ref{P:Xn33f}] 
Let notation be as above. Consider $\mathbb C^n$ as the hyperplane $\p^n\setminus V(x_0)$    so
that $[x_0:x_1\ldots:x_n]$ are projective coordinates on $\p^n$ and $\mu_f:\p^n\map X_f$ is a rational map defined on $\mathbb C^n$. Since $f$ is not fake we can suppose $n\geq 3$. Consider
three general points $p=(p_1, p_2, p_3)\in\mathbb (\mathbb P^n)^3$ and let $\Pi_p\subset\p^n$ be their linear span. We claim that the line $L_p=\Pi_p\cap V(x_0)$
determines a plane $\Pi'_p$ cutting the base locus scheme of $f$ in length three subscheme $\mathcal P$ spanning $\Pi_p'$.
Indeed $D_p=f(L_p)\subset\p^{n-1}$ is a conic cutting the base locus scheme of $g$ in length  three subscheme $\mathcal P'$ spanning a plane
$\overline \Pi_p$ because $g(D_p)=L_p$. Then taking  $\Pi'_p=g(\overline\Pi_p)$ the claim is proved. The length six scheme $\{p_1,p_2,p_3,\mathcal P\}$ spans
the 3-dimensional space $\langle\Pi_p,\Pi'_p\rangle$ so that it determines a unique twisted cubic $C_p\subset\p^{n-1}$ containing it. 
By Lemma \ref{L:crucial} the birational map $\mu_f:\p^n\map X_f$ is given by a linear system of cubic hypersurfaces having points of multiplicity at least 2 along its
base locus scheme $V(x_0,N(x))\subset\p^n$. Then $\mu_f(C_p)\subset X_f$
is a twisted cubic passing through the three general points $\mu_f(p_i)$, $i=1, 2, 3$.   This shows that $X_f=X^n(3,3)$ while to verify that
$X_f$ is not a rational normal scroll one can argue as in  the end of the previous proof.    \end{proof}

One immediately verifies
 that the projective equivalence class of $X_f$ does not depend on $f$ 
but only on its linear equivalence class. Hence there exists a well-defined map 
 \begin{align*}
 {\bf Bir}_{2,2}(\mathbb P^{n-1})_{\!\big/ \!\!\!
{\tiny{\begin{tabular}{l}
$linear$\\
$equivalence$
\end{tabular}}
}}
& \longrightarrow   \;\,   \boldsymbol { X}^n(3,3)_{\!\big/ \!\!\!
{\tiny{\begin{tabular}{l}
$projective$\\
$equivalence$
\end{tabular}}
}} \\
\big[   f  \big] \quad & \longmapsto \quad \big[  X_{f}  \big]. 
\end{align*}

 \subsubsection{\bf From the $C$-world to the $J$-world}
  \label{S:CtoJ}
Let us now explain how to associate in a direct and algebraic way a  rank 3 Jordan algebra to $f\in  {\bf Bir}_{2,2}(\mathbb P^{n-1})$. 
Assume that $\mathbb P^{n-1}=\mathbb P(V)$ for a $n$-dimensional vector space $V$.  
On the open set defined by $N(x)\neq 0$ we define 
$$j_f(x)=\frac{ F(x)}{N(x)}.$$
Then $j_f:V\map V$ is a birational map which is 
 homogeneous of degree $-1$. Following  \cite{McCrimmon-axioms},  we say that the map 
$j_f:V\map V$ is an {\it inversion} and that the elements $x\in V$ with $N(x)\neq 0$ are {\it invertible}.  For $x\in V$ invertible,
one sets
$$
P_f(x)=-d ({j_f})_{x}^{-1}.
$$
We  defined in this way a rational map  $P_f: V\dashrightarrow {\rm End}(V)$ 
which is homogeneous of degree 2.  Similarly one defines
 $j_g: V\dashrightarrow V$ and $P_g: V\dashrightarrow {\rm End}(V)$.

\begin{thm}\label{P:jordanXf} Let notation be as above. For  every linear equivalence class $[f]$,  $f\in  {\bf Bir}_{2,2}(\mathbb P^{n-1})$,
there exists a  Jordan algebra $\mathbb J_f$ of rank 3 such that $[\#_{\mathbb J_f}]=[f]$.

In particular every quadro-quadric Cremona transformation is linearly equivalent to an involution
which is the adjoint of a rank 3 Jordan algebra.
\end{thm}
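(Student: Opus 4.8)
The plan is to run the construction of \S\ref{S:fromJ} backwards: out of the birational data attached to $f$ I will first build a Jordan \emph{pair} structure on $(V,V)$, then cut it down to a rank $3$ Jordan \emph{algebra} $\mathbb J_f$ by choosing an invertible element, and finally check that the adjoint of $\mathbb J_f$ is linearly equivalent to $f$. Throughout, $F,G\colon V\to V$ denote quadratic lifts of $f$ and of $g=f^{-1}$, $N,M\in\mathrm{Sym}^3(V^*)$ are the cubic forms of \S\ref{S:S-C}, and $j_f(x)=F(x)/N(x)$, $j_g(y)=G(y)/M(y)$, $P_f=-d(j_f)^{-1}$, $P_g=-d(j_g)^{-1}$ are the maps introduced in \S\ref{S:CtoJ}.

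The first task is to record two basic structural facts. From \eqref{E:bir22} and \eqref{E:bir22-2} one gets $j_g\circ j_f=\mathrm{id}=j_f\circ j_g$, so that $j_g=j_f^{-1}$; and Euler's identity applied to $j_f$ (homogeneous of degree $-1$), namely $d(j_f)_x(x)=-j_f(x)$, yields at once the tautological relation $P_f(x)\bigl(j_f(x)\bigr)=x$, the analogue of the Jordan-algebra identity $U_x(x^{-1})=x$, together with its counterpart for $g$. Next, and this is where Lemma~\ref{L:crucial} is indispensable, I would show that $P_f$ and $P_g$ are genuine \emph{polynomial} maps $V\to\End(V)$, homogeneous of degree $2$: differentiating $j_f=N^{-1}F$ and using $dN_x=B_F(F(x),dx)$ gives $N(x)^2\,d(j_f)_x(h)=N(x)\,dF_x(h)-B_F(F(x),h)\,F(x)$ for all $h\in V$, while differentiating \eqref{E:bir22} gives $dG_{F(x)}\bigl(dF_x(h)\bigr)=dN_x(h)\,x+N(x)\,h$; combining these with their $g$-analogues one inverts $d(j_f)_x$ and obtains an explicit formula exhibiting $P_f(x)=-d(j_f)_x^{-1}$ as a quadratic polynomial map in $x$ --- this is the precise content of the statement that $j_f$ is an \emph{inversion} in the sense of \cite{McCrimmon-axioms}.

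The main step --- and the one I expect to be the real obstacle --- is to check that $(V^+,V^-)=(V,V)$, equipped with the quadratic operators $Q^+=P_f$ and $Q^-=P_g$, satisfies the three Jordan-pair identities of \S\ref{S:Jworld}. All the ingredients are then in place: $P_f,P_g$ are quadratic polynomial maps, tied to the mutually inverse inversions $j_f,j_g$ by $P_f(x)(j_f(x))=x$ and $P_g(y)(j_g(y))=y$, which is exactly the configuration from which the Jordan-pair machinery of McCrimmon and Loos produces a Jordan pair $V_f$. The difficulty is purely computational: the Jordan-pair identities are low-degree polynomial identities in $x$ and $y$ obtained by differentiating \eqref{E:bir22} and \eqref{E:bir22-2} once or twice, and the point is to organize the derivatives so that they collapse onto precisely these identities --- something possible only because the birationality of $f$ forces the strong constraint of Lemma~\ref{L:crucial} ($\partial N/\partial x_i\in I_2^{\sat}=\mathrm{span}\{f_1,\dots,f_n\}$). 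Along the way one records that $V_f$ is in fact a \emph{cubic} Jordan pair, with norm forms $N,M$ and sharp maps $F,G$.

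It remains to descend to an algebra. Since $f$ is a genuine quadro-quadric transformation, neither $N$ nor $M$ is identically zero, so $V_f$ has invertible elements; fixing an invertible $u\in V^-$, the product $x\bullet x'=\tfrac12\,(P_f)_{x,x'}(u)$ makes $V^+=V$ into a Jordan algebra $\mathbb J_f$ with unit $P_g(u)^{-1}(u)$, as recalled in \S\ref{S:Jworld}. By the Jordan-pair version of the isotopy formula \eqref{E:NormAdjointUisotop}, the generic norm of $\mathbb J_f$ is a nonzero scalar multiple of $N$ --- a genuine cubic form, so $\rk(\mathbb J_f)=3$ --- and its adjoint has the form $x\mapsto\lambda\bigl(F(x)\bigr)$ for a fixed $\lambda\in\mathrm{GL}(V)$; projectivizing, $\#_{\mathbb J_f}=\mathbb P(\lambda)\circ f$, whence $[\#_{\mathbb J_f}]=[f]$. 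Finally, $(x^\#)^\#=N(x)x$ holds in any rank $3$ Jordan algebra, so $\#_{\mathbb J_f}$ is a birational involution of bidegree $(2,2)$, which yields the ``in particular'' statement. (A more geometric variant would start instead from the variety $X_f\in\boldsymbol{X}^n(3,3)$ of Proposition~\ref{P:Xn33f} and read off the Jordan structure from its projective second fundamental form at a general point --- identified in the proof of that proposition with the linear system defining $f$ --- but I would still expect the verification of the Jordan identities to be the crux.)
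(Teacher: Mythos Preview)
Your proposal is correct and uses the same key ingredient as the paper --- Lemma~\ref{L:crucial} is what forces $P_f$ to be a genuine quadratic polynomial map, and both arguments then invoke McCrimmon's axiomatics \cite{McCrimmon-axioms} to produce the Jordan structure. The difference is one of organization rather than substance. The paper works directly at the level of Jordan \emph{algebras}: it first normalizes by replacing $F$ with $P_f(e)\circ F$ so that $P_f(e)=\mathrm{Id}_V$ and $j_f(e)=e$ for a chosen invertible $e$, then writes down the explicit formula $P_f(x)=-dG_{F(x)}+x\,B_G(x,dx)$ (obtained by substituting $y=j_f(x)$ into $d(j_g)_y$ and using \eqref{E:bir22}, \eqref{E:bir22-2} and Lemma~\ref{L:crucial}), and finally cites \cite[Theorem~4.4 and Remark~4.5]{McCrimmon-axioms} to conclude that $x\bullet_f y=\tfrac12 P_f(x,y)(e)$ is a Jordan product with unit $e$. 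The normalization pays off at the end: one gets $x^\#=F(x)$ on the nose, not merely $\lambda\circ F$ for some $\lambda\in GL(V)$, so $[\#_{\mathbb J_f}]=[f]$ is immediate without invoking isotopy formulae. Your Jordan-pair detour is precisely the more intrinsic variant the paper records in Remark~\ref{R:CtoJPair}; it avoids the somewhat artificial choice of $e$ up front, at the cost of recovering the algebra (and its adjoint) only up to the linear twist coming from the choice of $u\in V^-$.
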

\begin{proof}  
 Replacing  $F$ by $P_f(e)\circ F$ if necessary, 
we can assume that there exists an invertible element $e\in V$ such that $ P_f(e)={\rm Id}_V$. Euler's Formula
and the homogeneity of $j_f$ imply  $P_f(x)(j_f(x))=x$ for every invertible $x$ 
so that, without loss of generality,  we can also assume $j_f(e)=e$. 
 Similarly, one sets $j_g(y)=M(y)^{-1} G(y)$ for $y$ such that $M(y)\neq 0$.
 Taking the exterior derivative of the relation $j_g\circ j_f(x)=x$, 
 we deduce $P_f(x)=-d ({j_g})_{j_f(x)}$ for any invertible $x$. The differentials  $dG_y$ and $dM_y$  are  homogeneous of degree 1, respectively of degree 2, in $y$. Hence the substitution   $y=j_f(x)=N(x)^{-1}F(x)$ in
 $$d(j_g)_y=M(y)^{-1} dG_y- M(y)^{-2} G(y)dM_y$$ yields
\begin{align*}
 P_f(x)=
     &\, -
dG_{F(x)}   
 +    {x }{N(x)}^{-1} dM_{F(x)}    && \mbox{(by } (\ref{E:bir22})  \mbox{ and }  (\ref{E:bir22-2}))
   \\
       = &\, -
dG_{F(x)}   
 + {x }{N(x)}^{-1}  B_G\big( G(F(x)),dx\big)     && \mbox{(by Lemma  }  \ref{L:crucial})
   \\
    = &\, -
dG_{F(x)}   
 +    x\, B_G\big( x,dx).   && 
    \end{align*}
Thus the rational map  $P_f:V\dashrightarrow   \End(V)$ extends to a polynomial quadratic affine morphism $P_f:V\rightarrow \End(V)$. Therefore
 $$
P_f(x,y)=   P_f(x+y)-P_f(x)-P_f(y) \in {\rm End}(V) $$
 is bilinear in $x$ and $y$ and the results of \cite{McCrimmon-axioms} (in particular Theorem 4.4 and Remark 4.5 therein)
  assure  that the product $\bullet_f$ on $V$ defined by 
 $$
 x\bullet_f y=\frac{1}{2}P_f(x,y)(e)
 $$
 satisfies the Jordan identity (\ref{E:jordan}), 
admits $e$ as a unital element and induces on  
$V$ a structure of Jordan algebra noted by $\mathbb J_f$.  
For a $x\in V$ invertible element, the inverse for this product is given 
by $x^{-1}=j_f(x)$ hence the adjoint of $x$ is  $x^\#=F(x)$, 
 yielding  $\rk(\mathbb J_f)=3$, see Example \ref{EX:bir}.(\ref{ex:rank}).
\end{proof}

  It can be verified that the isotopy equivalence class of $\mathbb J_f$ depends only on the linear equivalence class of $f$ hence one obtains a well-defined map
  \begin{align*}
 {{\bf Bir}_{2,2}(\mathbb P
 ^{n-1})}_{\!\big/ \!\!\!
{\tiny{\begin{tabular}{l}
$linear$\\
$equivalence$
\end{tabular}}
}}
& \longrightarrow   \;\,   
{\boldsymbol {\mathcal J \!ordan}_{3}^n}_{\! \!\big/  isotopy} 
 \\
\big[   f  \big] \quad & \longmapsto \quad \big[  {\mathbb J}_{f}  \big]. 
\end{align*}

{\begin{rem}
\label{R:CtoJPair}
{\rm  
In the previous proof we choose  a point $e$ 
such that $P_f(e)={\rm Id}_V$, which is  not natural from an intrinsic point of view.
More generally one can consider the source space 
and the target space of $f$ as distinct $n$-dimensional projective spaces
associated to  two  vector spaces $V_F$ and $V_G$ of dimension $n$.
We can consider $f$ as a birational map $f:\mathbb P(V_F)\dashrightarrow \mathbb P(V_G)$ 
with inverse  $g:\mathbb P(V_G)\dashrightarrow \mathbb P(V_F)$. 
Reasoning as in  the proof of Theorem \ref{P:jordanXf},  one  proves
 that $-d(j_f)_x^{-1}$ (resp. $-d(j_g)_y^{-1}$) depends
 quadratically on $x\in V_F$ (resp. on $y\in V_G$),
defining a quadratic map $P_f:V_F\rightarrow {\rm End}(V_G,V_F)$ 
(resp. $P_g:V_G\rightarrow {\rm End}(V_F,V_G)$). 
 Then $(V_F,V_G)$ together with the pair of quadratic operators $(P_f,P_g)$ is a Jordan pair.}
\end{rem}}

 \subsection{Starting from the $X$-world}
 \label{S:fromXworld}
 
In this section, we describe how to associate  to  a $X\in \boldsymbol{X}^n(3,3)$ an equivalence class of quadro-quadric Cremona transformations of $\mathbb P^{n-1}$ and also  how to produce directly (and not through the previous construction!) a  rank 3 Jordan algebra $\mathbb J$ of dimension $n$, defined modulo  isotopy, such that $X$ is projectively equivalent to $X_{\mathbb J}$.
\medskip

Let $X\subset \mathbb P^{2n+1} $ be an element of $ \boldsymbol{X}^n(3,3)$. Let $x\in X_{\rm reg}$ be a general point 
such that $X$ is 2-RC by a family $\Sigma_x\subset {\rm Hilb}^{3t+1}(X,x)$
 of twisted cubics included in $X$ and passing through $x$.
Denote  by  $\pi_x: X\dashrightarrow \mathbb P^n$ the restriction to $X$ of the tangential projection with center $T_x X \subset \mathbb P^{2n+1}$. It is known  that $\pi_x$ is birational, see \cite{PT,PR}. Let $\beta_x: \widetilde X\rightarrow X$ be the blow-up of $X$ at $x$ and  let $E_x=\beta_x^{-1}(x)$  be the associated  exceptional divisor. Let $\varphi_{X,x}$ be the restriction to $E_x$ of the lift of $\pi_x$ to $\widetilde X$: 
 \begin{equation}
 \label{E:varpiXx}
 \varphi_{X,x}= (\pi_x\circ \beta_x)\big|_{E_x} : E_x\dashrightarrow \mathbb P^n.
 \end{equation}
  
In  \cite[\S 5]{PR}, we proved that: 
\begin{enumerate}
\item[{\rm (a)}]  $\varphi_{X,x}$ is  birational onto its image that is a hyperplane $H_x\subset \mathbb P^n$; 
 \item[{\rm (b)}]  $\varphi_{X,x}$ is induced by the second fundamental form $| II_{X,x}| \subset |\mathcal O_{E_x}(2)|$;
\item[{\rm (c)}] as a scheme, the base locus scheme of $\varphi_{X,x}$ coincides  with the Hilbert scheme of lines passing through
$x$ and contained in $X$; 
  \item[{\rm (d)}] $(\varphi_{X,x})^{-1}:H_x\dashrightarrow E_x$ is also induced by a linear system of hyperquadrics in $H_x$. 
 \item[{\rm (e)}] $\varphi_{X,x}$  is a fake quadro-quadric transformation if an only if $X\subset\p^{2n+1}$ is a rational normal scroll. 
  \smallskip \end{enumerate}
  
From (c) and (d) one could deduce another proof of Lemma \ref{L:crucial}, see {\it loc. cit.}, or equivalently one can
say that (d)
 is the incarnation in the $X$-world of the result proved 
in Lemma \ref{L:crucial}, see \cite[Theorem 5.2]{PR} for details.

\begin{rem}
 {\rm   The map $v\mapsto \tilde{v}$ considered in the proof of Theorem \ref{T:XJ} below 
is an alternative geometrical definition of (a quadratic lift of) $\varphi_{X,x}$ 
which is more intrinsic than the preceding one since it does not depend on
 the embedding  of $X$ in  the projective space $\mathbb P^{2n+1}$.}
  \end{rem}

  \subsubsection{\bf From the $X$-world to the $C$-world}
  \label{S:XtoC}

 From now on we shall assume that $X\in {\boldsymbol X}^n(3,3)$ so that   $X$ is not a rational normal scroll.
The results  listed above imply that, after identifying $E_x$ and $H_x$ with $\mathbb P^{n-1}$,  the map  $\varphi_{X,x}$ is  a Cremona transformation of bidegree $(2,2)$ of $\mathbb P^{n-1}$. Moreover, in \cite[Theorem 5.2]{PR} it is proved that $X$ is projectively equivalent to the variety $X_{\varphi_{X,x}}$ associated to $\varphi_{X,x}$ via the construction in section \ref{S:Cremona}.  We leave to the  reader to verify that  the linear equivalence class of $ \varphi_{X,x}$ does not depend on $x$ but only  on the projective equivalence class of $X$.
 Therefore we have  a well-defined application 
  \begin{align*}
   \boldsymbol { X}^n(3,3)_{\!\big/ \!\!\!
{\tiny{\begin{tabular}{l}
$projective$\\
$equivalence$
\end{tabular}}
}}
& \longrightarrow  \;\, 
 {\bf Bir}_{2,2}(\mathbb P^{n-1})_{\!\big/ \!\!\!
{\tiny{\begin{tabular}{l}
$linear$\\
$equivalence$
\end{tabular}}
}}
\\
\big[   X  \big] \quad & \longmapsto \quad \big[ \varphi_{X,x}   \big]. 
\end{align*}
The results of the previous sections show that this map is a bijection.
 
  \subsubsection{\bf From the $X$-world to the $J$-world}
  \label{S:XtoJ}

  The results of  \cite[\S 5]{PR} recalled above and Theorem \ref{P:jordanXf}
 immediately imply that
any  $X\in \boldsymbol{X}^n(3,3)$ is of {\it Jordan type}, 
that is  there exists a rank three Jordan algebra $\mathbb J$ 
such that $X$ is projectively equivalent to $X_{\mathbb J}$. 

There is also  a  direct way  to recover geometrically
 the underlying  structure of Jordan algebra from $X$.  
Since there is no real difficulty here, we will leave to the interested readers 
to fill up some details of the proof of the next result,
which was conjectured firstly in \cite[\S 5]{PR}.

\begin{thm}\label{T:XJ}  If $X=X^n(3,3)\subset\p^{2n+1}$ is not a rational normal scroll, then there exists a rank three Jordan algebra $\mathbb J_X$ such that
$X$ is projectively equivalent to $X_{\mathbb J_X}$.
\end{thm}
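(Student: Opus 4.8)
The plan is to reduce the statement to the two bridges already built, and then, because the statement advertises a construction of $\mathbb J_X$ intrinsic to $X$, to redo that reduction directly inside $X$. For the first and most economical route: since $X=X^n(3,3)$ is not a rational normal scroll we have $X\in\boldsymbol X^n(3,3)$, and the results of \cite[\S 5]{PR} recalled in \S\ref{S:XtoC} say that, after identifying $E_x$ and $H_x$ with $\p^{n-1}$ for a general $x\in X_{\rm reg}$, the map $\varphi_{X,x}$ of (\ref{E:varpiXx}) lies in $\mathbf{Bir}_{2,2}(\p^{n-1})$ and that $X$ is projectively equivalent to $X_{\varphi_{X,x}}$. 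Applying Theorem \ref{P:jordanXf} to $f:=\varphi_{X,x}$ yields a rank three Jordan algebra $\mathbb J_f$ with $[\#_{\mathbb J_f}]=[f]$. Since the class $[X_g]$ depends only on $[g]$ (as observed right after the proof of Proposition \ref{P:Xn33f}), this gives $[X]=[X_{\varphi_{X,x}}]=[X_{\#_{\mathbb J_f}}]$; and for $g=\#_{\mathbb J}$ the quadratic lift is $x\mapsto x^\#$ and the associated cubic is the generic norm $N$ (from $(x^\#)^\#=N(x)x$ one reads off both that $\#_{\mathbb J}$ is its own inverse and that the cubic in (\ref{E:bir22}) equals $N$), so $\mu_{\#_{\mathbb J}}=\mu_{\mathbb J}$ and hence $X_{\#_{\mathbb J}}=X_{\mathbb J}$ on the nose. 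Taking $\mathbb J_X:=\mathbb J_f$ then finishes the argument.

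To obtain $\mathbb J_X$ intrinsically, I would fix two general points $x_1,x_2\in X$, so that $\p^{2n+1}=T_{x_1}X\oplus T_{x_2}X$, and set $V:=T_{X,x_1}$. A dimension count using that $X$ is $3$-RC by cubics shows that the twisted cubics contained in $X$ and passing through both $x_1$ and $x_2$ form an $(n-1)$-dimensional family, so a general direction $[v]\in\p(V)$ determines a unique such cubic $C_v$, tangent to $[v]$ at $x_1$. Transporting the differential at $x_1$ along $C_v$ to its suitably normalized velocity at $x_2$ defines a rational map $v\mapsto\tilde v$ from $V$ to $T_{X,x_2}$, homogeneous of degree $2$; this is the embedding-free incarnation of $\varphi_{X,x_1}$ alluded to in the Remark above, and properties (a)--(d) of \cite[\S 5]{PR} guarantee that its projectivization is a quadro-quadric Cremona transformation. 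Feeding $v\mapsto\tilde v$ into the machinery of \cite{McCrimmon-axioms} exactly as in the proof of Theorem \ref{P:jordanXf} — normalize so that $-d(v\mapsto\tilde v)_e^{-1}={\rm Id}$, check that $P(x)=-d(\cdot)_x^{-1}$ extends to a quadratic affine morphism, and set $x\bullet y=\tfrac{1}{2}P(x,y)(e)$ — produces a rank three Jordan algebra $\mathbb J_X$ on $V$ whose adjoint is $v\mapsto\tilde v$ and whose norm $N$ is the cubic form cutting out, with multiplicity two, the indeterminacy locus of that map. It then remains to recognize the tautological parametrization $v\mapsto[1:v:\tilde v:N(v)]$ of $V$ into $\p(\mathbb C\oplus V\oplus T_{X,x_2}\oplus\mathbb C)=T_{x_1}X\oplus T_{x_2}X$ as $X$ itself: one checks that it carries each line $\mathbb C v$ onto the cubic $C_v$, and since these cubics cover $X$ one concludes that its image is $X$ and that $X$ is projectively equivalent to $X_{\mathbb J_X}$.

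The genuinely nontrivial point is the passage from the purely geometric self-map $v\mapsto\tilde v$ to an \emph{algebra}: that a birational self-map of $V$ homogeneous of degree $-1$ whose differential-inverse depends quadratically on the point is the inversion of a rank $3$ Jordan algebra is precisely McCrimmon's characterization, which is already invoked in Theorem \ref{P:jordanXf}; I would simply cite it rather than reprove it, and this is where the real weight of the theorem sits. Everything else — the dimension count for the family $\{C_v\}_{[v]\in\p(V)}$, the fact that $v\mapsto\tilde v$ has bidegree $(2,2)$, and the coordinate bookkeeping identifying $v\mapsto[1:v:\tilde v:N(v)]$ with the given embedding of $X$ — is routine once \cite[\S 5]{PR} is in hand, which is why these details can safely be left to the reader.
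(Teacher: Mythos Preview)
Your proposal is correct. The first route is precisely the indirect argument the paper itself flags just before stating the theorem (``The results of \cite[\S 5]{PR} recalled above and Theorem \ref{P:jordanXf} immediately imply that any $X\in \boldsymbol{X}^n(3,3)$ is of Jordan type''), so there is nothing to add there.

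Your second, intrinsic route is close in spirit to the paper's but differs in formalism. The paper works symmetrically with the pair $(V^+,V^-)=(T_{X,x^+},T_{X,x^-})$: it defines an exponential map $\exp:V^\sigma\to X$ via the $1$-covering family $\Sigma_{x^+x^-}$, sets $Q^\sigma_v=-(\delta^\sigma_v)^{-1}\circ\delta^{-\sigma}_{\tilde v}$ using the differentials of $\exp$, and argues that $(V^+,V^-,Q^+,Q^-)$ is a Jordan pair in the sense of Loos; the Jordan algebra is then obtained by choosing an invertible $u\in V^-$ and taking $V^+_u$. You instead single out the map $v\mapsto\tilde v$ from $V$ to $T_{X,x_2}$, observe it is (a lift of) a quadro-quadric Cremona transformation, and feed it into McCrimmon's inversion axioms exactly as in the proof of Theorem \ref{P:jordanXf}. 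In effect, your ``intrinsic'' construction is the $X\to C\to J$ composite again, only with $\varphi_{X,x}$ rebuilt geometrically rather than via blow-up. What the paper's Jordan-pair approach buys is manifest symmetry in the two base points and a cleaner explanation of isotopy-invariance (changing $u$ gives an isotope); what your approach buys is economy, since Theorem \ref{P:jordanXf} is already on the shelf. The two are linked by Remark \ref{R:CtoJPair}, which explains that the $C\to J$ construction naturally lands in the Jordan-pair category when source and target are kept distinct.
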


\begin{proof}
Let $x^+,x^-$ denote two general points of $X_{\rm reg}$ 
such that $X$ is 1-RC by the family $\Sigma_{x^+x^-}$ of twisted cubics 
included in $X$ passing through $x^+$ and $x^-$.  
One has $\mathbb P^{2n+1}=T_{x^+}{X}\oplus T_{x^-}{X}$ and for $\sigma=\pm$, let
$\pi^\sigma=\pi_{x^\sigma}$ be the restriction to $X$ of the tangential projection
 with center $T_{x^\sigma} X$ onto the projective tangent space $T_{x^{-\sigma}} X$
 at the other point. This map is defined at $x^{-\sigma}$ 
and by definition  ${x^{-\sigma}}=\pi^\sigma({x}^{-\sigma})$.

 Define  $V^\sigma$ as the complex tangent space $T_{X,x^\sigma}$.  For $v\in V^\sigma$ generic, 
there exists a unique twisted cubic curve $C_v$ included in $ X$,  joining 
 $x^{\sigma}$ to $x^{-\sigma}$ and having   $[v]$ as tangent direction  at ${x}^{\sigma}$.
  More precisely, there exists a unique isomorphism 
$\alpha_v:\mathbb P^1\rightarrow C_v$ such that 
 $ \alpha_v(0:1)=x^\sigma$, $  \alpha_v(1:0)=x^{-\sigma}$  and ${d\alpha_v} (s:1)/{ds}|_{s=0}=v$.  
 The map $$v\mapsto \exp(v):=\alpha_v(1:1)$$ can be  extended to  
the whole $V^\sigma$ since, after some natural identifications, 
 it is nothing but the affine embedding $\mu_{\psi_{\sigma}}$ defined in   (\ref{E:mu-f}), 
where $\psi_\sigma$ is the inverse of the quadro-quadratic birational map $\varphi_{X,x^{-\sigma}}$  associated to $\pi^{-\sigma}$ through formula  (\ref{E:varpiXx}) above. 
We thus defined geometrically an  {\it exponential map} 
 \begin{align*}
 \exp: V^\sigma& \longrightarrow X 
  \end{align*}
  whose image is denoted by $X^\sigma$. Being an affine embedding,   
   its  differential $$d\exp_v: T_{V^\sigma,v}\rightarrow T_{X,\exp(v)}$$ is an isomorphism
   for every $v\in V^\sigma$.  
Using the linear structure of $V^\sigma$, one can (canonically) 
identify $T_{V^\sigma,v}$ with $V^\sigma$ itself 
obtaining a linear isomorphism $\delta_v^\sigma: V^\sigma\rightarrow T_{X,\exp(v)}$. 
 For $v$ general,  $\exp(v)\in X^{-\sigma}$ so 
that there exists a unique $\tilde v\in V^{-\sigma}$ such that 
$\exp(v)=\exp(\tilde v)$ (moreover $\tilde{v}= {d\alpha_v} (1:t)/{dt}|_{t=0}$). 
Thus we can define a linear isomorphism by setting
$$
Q^\sigma_v=    -  \big(   \delta_v^\sigma\big)^{-1}   \circ   \delta_{\tilde v}^{-\sigma} : V^{-\sigma}\longrightarrow V^\sigma. 
$$

The linear map $Q^\sigma_v$ depends quadratically on $v\in V^\sigma$
and this association extends 
to the whole $V^\sigma$ yielding a quadratic polynomial map
\begin{equation*}
\label{E:Qsigma}
 Q^\sigma:  V^\sigma\rightarrow {\rm Hom}( V^{-\sigma}, V^\sigma).
  \end{equation*}
 
  The quadratic maps $Q^\pm$ thus defined   induce a structure of Jordan pair on 
  $(V^+,V^-)$ admitting invertible elements. For $u\in V^{-}$ invertible, the Jordan algebra $V^+_u$ has  rank 3 and the  initially considered variety $X$ is projectively equivalent to  $X_{V^+_u}$. 
 \end{proof}

 \begin{rem} {\rm Let $X\in \boldsymbol{X}^n(3,3)$. 
  The previous proof shows that the Jordan avatar of 
  the  geometrical data formed by $X$ together with two general points $x^+,x^-$ on it  is the Jordan pair $(V^+,V^-)$. Similarly, the geometrical  object corresponding
 precisely  to a rank three Jordan algebra  $\mathbb J$ is  not really $X_{\mathbb J}$ but rather the geometrical data formed by $X_{\mathbb J}$ together with three  points on it.   
    These two remarks lead to the following  heuristic question: what are the Jordan-theoretic counterparts of the data of $X$ alone, or of a pair $(X,x)$ where $x$ is a general point on $X$?
Some of the notions introduced in \cite{bertram} seem to be relevant to study this question.}
  \end{rem}\label{R:HeuristicRemark}  
  
 We shall now briefly outline another geometrical way of recovering the algebra $\mathbb J_X$ naturally associated to $X\in \boldsymbol{X}^n(3,3)$. Let notation be as in \S \ref{S:fromXworld}, let $x_1,x_2\in X$ be general points and let $\psi:\p^n\map\p^n$ be the birational map $\pi_{x_1}\circ\pi_{x_2}^{-1}$, see also end of \S \ref{S:fromJ}. From the results in \cite[\S 5]{PR} recalled above, it is not difficult to see that the birational map $\psi$, which is clearly of bidegree $(3,3)$, is of Spampinato type.
 Indeed, arguing as in the proof of Theorem \ref{P:jordanXf} and based on that analysis, one proves that $\psi$ is linearly equivalent
  to the involution of a rank 4 Jordan algebra $\widetilde J_X$, which is clearly isomorphic to $\mathbb J_X\times \mathbb C$. We leave to the reader the details of the proof of this claim. In conclusion
  from a geometrical point of view the passage from the $X$-world to the $C$ and the $J$ worlds is completely determined
  by the general tangential projections.

   \section{Statement of the main theorem and of a general principle}
   \label{S:XJC-correspondance}
 The constructions of the previous  sections are all represented in the  diagram below, which we will call the `{\it $XJC$-diagram}': 
      \begin{equation*}
  \xymatrix@R=1cm@C=0.7cm{    
    \boldsymbol { X}^n(3,3)_{\!\big/ \!\!\!
{\tiny{\begin{tabular}{l}
$projective$\\
$equivalence$
\end{tabular}}
}}
 { \boldsymbol{ \ar@/^5pc/[rrrr]
 | {\quad [X]\;   \longmapsto   \;  [V_{X,u}^+\big]  \quad }}}
    { \boldsymbol{  \ar@/_5pc/[dddddrr]   |*[@]
     {\; {{    [X] \longmapsto [   \varphi_{X,x} ]   \quad }}}}}
     { \boldsymbol{  \ar@{<-}[dddddrr]    | *[@] { \quad {{ [X_f] \stackrel{ \rotatebox{180}{$\longmapsto$}}{    }  [f]  }} \quad }}}
    & & &   &      
   {\boldsymbol {\mathcal J \!ordan}_{3}^n}_{\! \!\big/  isotopy} 
 { \boldsymbol{ \ar@/_0pc/[llll]| {\quad [X_{\mathbb J}]  \,     \stackrel{ \rotatebox{180}{$\longmapsto$}}{    }   \,   [\mathbb J]  \quad }}} 
    \\
 &    &                 &       
 & \\    & & &   & \\
    &    &          &   &  \\ & & &   & \\
     &  &
  {\bf Bir}_{2,2}(\mathbb P^{n-1})_{\!\big/ \!\!\!
{\tiny{\begin{tabular}{l}
$linear$\\
$equivalence$
\end{tabular}}
}}
  { \boldsymbol{  \ar@/_5pc/[uuuuurr] |*[@] {\quad   [f] \longmapsto [\mathbb J_f]  \quad  }
  }}
{ \boldsymbol{  \ar@{<-}[uuuuurr]   |*[@] {\quad   [\#_{\mathbb J}]    \stackrel{ \rotatebox{180}{$\longmapsto$}}{    }          [\mathbb J]  \quad  }   }}
      & & 
 }
\end{equation*}

Then the main results of this paper can be formulated in concise terms by  making reference to this diagram:  \begin{thm} 
\label{T:main}
The above diagram is commutative, all maps  appearing in it  are bijections and 
 the composition of two of these maps, when possible, is  the identity. 
  \end{thm}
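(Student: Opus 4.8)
The plan is to verify the three assertions of Theorem~\ref{T:main} --- commutativity, bijectivity, and the identity property for composites --- by combining the maps constructed in \S\ref{S:fromJ}, \S\ref{S:Cremona} and \S\ref{S:fromXworld} with the two structure results Theorem~\ref{P:jordanXf} and Theorem~\ref{T:XJ}. There are six arrows in the diagram, naturally organized as three pairs: between $\boldsymbol{X}^n(3,3)/{\sim}$ and $\boldsymbol{\mathcal J\!ordan}^n_3/{\sim}$, between $\mathbf{Bir}_{2,2}(\p^{n-1})/{\sim}$ and $\boldsymbol{\mathcal J\!ordan}^n_3/{\sim}$, and between $\boldsymbol{X}^n(3,3)/{\sim}$ and $\mathbf{Bir}_{2,2}(\p^{n-1})/{\sim}$. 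First I would isolate the three "elementary" composites that are visibly built into the constructions: (i) $[\mathbb J]\mapsto[X_{\mathbb J}]\mapsto[\varphi_{X_{\mathbb J},x}]$; (ii) $[f]\mapsto[X_f]\mapsto[\mathbb J_{X_f}]$ obtained via Theorem~\ref{T:XJ}; and (iii) the composite $[\mathbb J]\mapsto[\#_{\mathbb J}]\mapsto[X_{\#_{\mathbb J}}]$. Claim (iii) is essentially immediate because, by the very definition of $\mu_f$ in \eqref{E:mu-f} with $f=\#_{\mathbb J}$, $F(x)=x^\#$ and $N$ the generic norm, the variety $X_{\#_{\mathbb J}}$ is literally the image of $\mu_{\mathbb J}$, hence equals $X_{\mathbb J}$; this identifies the bottom-left-to-top arrow as a section of $[X]\mapsto[\mathbb J_X]$.

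Next I would establish that $[f]\mapsto[\mathbb J_f]$ (from \S\ref{S:CtoJ}) and $[f]\mapsto[X_f]\mapsto[\mathbb J_{X_f}]$ agree, and that $[X]\mapsto[\varphi_{X,x}]\mapsto[\mathbb J_{\varphi_{X,x}}]$ recovers $[\mathbb J_X]$. The key input is \cite[Theorem~5.2]{PR}, already quoted in \S\ref{S:XtoC}, which says $X$ is projectively equivalent to $X_{\varphi_{X,x}}$; combined with Theorem~\ref{P:jordanXf} applied to $f=\varphi_{X,x}$, whose proof produces a rank~3 Jordan algebra $\mathbb J_f$ with $\#_{\mathbb J_f}$ linearly equivalent to $f$ and with adjoint $x^\#=F(x)$, one gets $X_{\mathbb J_f}=X_f=X_{\varphi_{X,x}}\cong X$. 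Since by the remark following the proof of Theorem~\ref{P:jordanXf} the isotopy class $[\mathbb J_f]$ depends only on $[f]$, all of this is well-defined on equivalence classes. The upshot is a commuting triangle $\boldsymbol{X}\to\mathbf{Bir}\to\boldsymbol{\mathcal J\!ordan}$ together with the reverse triangle $\boldsymbol{\mathcal J\!ordan}\to\boldsymbol{X}\to\mathbf{Bir}$ refining into: every pair of "opposite" arrows between two given worlds composes to the identity on one side, and the structure theorems supply surjectivity, which upgrades each such one-sided inverse to a genuine two-sided inverse.

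Concretely, for bijectivity I would argue as follows. Theorem~\ref{P:jordanXf} shows $[\mathbb J]\mapsto[\#_{\mathbb J}]$ is surjective onto $\mathbf{Bir}_{2,2}(\p^{n-1})/{\sim}$; the composite $[f]\mapsto[\mathbb J_f]\mapsto[\#_{\mathbb J_f}]=[f]$ shows it is also injective-on-the-nose with explicit inverse $[f]\mapsto[\mathbb J_f]$, so these two arrows are mutually inverse bijections. Theorem~\ref{T:XJ} shows $[\mathbb J]\mapsto[X_{\mathbb J}]$ is surjective onto $\boldsymbol{X}^n(3,3)/{\sim}$, and $[X]\mapsto[\mathbb J_X]$ (via Theorem~\ref{T:XJ} itself, or via the Jordan-pair construction in its proof together with \cite[Theorem~5.2]{PR}) provides a right inverse; checking that it is also a left inverse amounts to checking $X_{\mathbb J_X}\cong X$, which is exactly the content of Theorem~\ref{T:XJ}, so these two arrows are mutually inverse as well. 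The $\boldsymbol{X}\leftrightarrow\mathbf{Bir}$ pair is then forced: $[X]\mapsto[\varphi_{X,x}]$ equals the composite $\boldsymbol{X}\to\boldsymbol{\mathcal J\!ordan}\to\mathbf{Bir}$ (because $\varphi_{X,x}$ is linearly equivalent to $\#_{\mathbb J_X}$, again by Theorem~\ref{P:jordanXf} and \cite[Theorem~5.2]{PR}), a composite of two bijections, hence a bijection; and $[f]\mapsto[X_f]$ is its inverse because $X_{\#_{\mathbb J_f}}=X_f$ and $\varphi_{X_f,\cdot}$ is linearly equivalent to $\#_{\mathbb J_{X_f}}=\#_{\mathbb J_f}$, which is linearly equivalent to $f$. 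Finally, "the composition of two of these maps, when possible, is the identity" is now a formal consequence: in a triangle of three sets where every pair of sets is joined by a mutually inverse pair of arrows and the two "forward" arrows compose to the "forward" arrow of the third pair, any two composable arrows compose to an identity or to the third arrow, and the three two-sided-inverse relations we have proved cover precisely the "identity" cases.

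The main obstacle, I expect, is not any single deep step but rather the bookkeeping required to see that the many a~priori different ways of passing from $X$ (or from $f$) to a Jordan algebra all land in the same isotopy class --- that is, genuinely verifying the commutativity of the diagram rather than just the bijectivity of its arrows. Concretely the delicate point is matching the Jordan algebra $\mathbb J_f$ produced analytically in the proof of Theorem~\ref{P:jordanXf} (via $P_f(x)=-d(j_f)_x^{-1}$ and \cite{McCrimmon-axioms}) with the Jordan pair $(V^+,V^-)$ produced geometrically in the proof of Theorem~\ref{T:XJ} (via the exponential map and $Q^\sigma_v=-(\delta^\sigma_v)^{-1}\circ\delta^{-\sigma}_{\tilde v}$); one must check that choosing an invertible $u$ and forming $V^+_u$ reproduces, up to isomorphism, the algebra $\mathbb J_{\varphi_{X,x}}$ of Theorem~\ref{P:jordanXf}. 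Since the identifications in \S\ref{S:XtoJ} are already asserted there (with the reader invited to fill details), and \cite[Theorem~5.2]{PR} provides the crucial projective-equivalence $X\cong X_{\varphi_{X,x}}$, this reduces to a compatibility check that, while somewhat tedious, involves no new idea beyond unwinding the affine embeddings $\mu_f$ and the tangential projections $\pi_{x^\sigma}$.
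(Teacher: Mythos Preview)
Your proposal is correct and takes essentially the same approach as the paper: the paper's own proof is the single sentence ``Once the maps in the $XJC$-diagram have been introduced, the proof of the preceding theorem reduces to straightforward verifications left to the reader,'' and what you have written is precisely a careful outline of those verifications, drawing on Theorems~\ref{P:jordanXf} and~\ref{T:XJ} and \cite[Theorem~5.2]{PR} exactly as the surrounding text intends. Your identification of the one genuinely delicate bookkeeping point --- matching the analytically constructed $\mathbb J_f$ with the geometrically constructed Jordan pair $(V^+,V^-)$ --- is apt, and the paper itself acknowledges this by leaving those details to the reader in \S\ref{S:XtoJ}.
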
  
    
 Once the maps in the $XJC$-diagram have been introduced, 
the proof of the preceding theorem reduces to  straightforward verifications left to the reader. 
    
    Theorem \ref{T:main} says in some sense that (up to certain well-understood  equivalence relations) there are correspondences between the objects of  these three distinct worlds. The $X$-world is a world of particular projective algebraic varieties sharing deep geometrical properties and it can be considered as  a `geometrical world'. The $J$-world is a world of particular algebras so that  it is an `algebraic world' while we consider the $C$-world of another nature, which  we will call `cremonian'.  
    \smallskip

  A consequence of the preceding main theorem is the following general principle:

  \begin{XJC}
  Any notion, construction or result concerning one of the $X$,  $J$ or $C$-world  admits a counterpart in the other two worlds.    
  \end{XJC}

 \begin{rem}
    \label{R:categ}
    {\rm 
   The $XJC$--Principle is not a mathematical result in the classical sense  and it  has to be considered as a kind of meta-theorem. 
Theorem \ref{T:main}  and the $XJC$-Principle are  manifestations  
of a deeper phenomena that could be formulated in terms of equivalences of categories. 
We plan to come back to this point of view in the near future and  
we will not deal with this here, although it is very interesting and natural. 
In the sequel we prefer to present some different applications regarding classification results
for particular classes of objects in the different worlds. Other applications will be
obtained in \cite{PR3}.  }      
    \end{rem}

Maybe the better way to realize that such a principle holds consists 
 in presenting some archetypal examples. 
 \subsection{\bf A first occurence of the $XJC$-principle } 
 Assume that $X, \mathbb J$ and $f$ are corresponding objects.

 \begin{prop}\label{P:product} The following assertions are equivalent:
 \begin{enumerate}
 \item[{\rm (i)}] the variety $X$ is  a cartesian product;
 \item[{\rm (ii)}] the algebra $\mathbb J$ is  a direct product;
  \item[{\rm (iii)}] the Cremona map $f$ is  an elementary quadratic transformation, see Example \ref{EX:bir}.
  
\end{enumerate} 
Moreover,  the objects satisfying these properties are respectively: 
the Segre embeddings ${\rm Seg}(\mathbb P^1\times Q^{n-1})$, 
the direct products $\mathbb C\times \mathbb J'$ where $\mathbb J'$
 is a Jordan algebra  of rank 2, the elementary quadratic 
transformations.
\end{prop}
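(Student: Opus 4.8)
The plan is to exploit the $XJC$-correspondence (Theorem \ref{T:main}) to reduce the equivalence of (i), (ii), (iii) to checking each pair on the ``most algebraic'' side, namely the $J$-world, where direct product decompositions are cleanest, and then transporting the resulting structure through the explicit bijections. So the first step will be to recall that, under the correspondence, $X$ is projectively equivalent to $X_{\mathbb J}$ and $f$ is linearly equivalent to $\#_{\mathbb J}$, so it suffices to prove:
\begin{itemize}
\item[(a)] $\mathbb J \cong \mathbb J_1 \times \mathbb J_2$ nontrivially $\iff$ $X_{\mathbb J}$ is a nontrivial cartesian product;
\item[(b)] $\mathbb J \cong \mathbb J_1 \times \mathbb J_2$ nontrivially $\iff$ $\#_{\mathbb J}$ is linearly equivalent to an elementary quadratic transformation.
\end{itemize}

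\textbf{Key steps.} For (a), I would first compute, for a direct product $\mathbb J = \mathbb J_1\times \mathbb J_2$ of rank $3$ Jordan algebras, that the generic norm and adjoint decompose multiplicatively, i.e. $N(x_1,x_2)=N_1(x_1)N_2(x_2)$ and $(x_1,x_2)^{\#}$ is built from the $x_i$ and $N_i(x_i)$; since $\mathbb J$ has rank $3$ and the only way to write $3$ as a sum of ranks $\geq 1$ (with neither factor of rank $3$, i.e. a genuine product) is $3 = 1+2$, one factor is forced to be $\mathbb C$ and the other a rank $2$ Jordan algebra $\mathbb J'=\mathbb C\oplus W$. Writing $\mathbb J'$-coordinates via its quadratic norm $N'$ and adjoint $x'^{\#}=$ ``conjugate'', the embedding $\mu_{\mathbb J}: (\lambda,x')\mapsto [1:\lambda:x':\lambda x'^{\#}:N'(x')\lambda:\ldots]$ visibly factors, after a linear change of coordinates on $\p^{2n+1}$, as the Segre product of $v_3(\p^1)=X_{\mathbb C}\subset\p^3$ with the rank $2$ twisted cubic $X_{\mathbb J'}$, which is the quadric $Q^{n-1}$ (the rank $2$ analogue, cf. Example \ref{EX:jordan}.(\ref{Ex:rank2}) and the discussion of rank $2$ norms); this identifies $X_{\mathbb J}$ with $\mathrm{Seg}(\p^1\times Q^{n-1})$. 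Conversely, if $X\in\boldsymbol X^n(3,3)$ is a nontrivial cartesian product $X=X'\times X''$, the twisted cubics $3$-covering $X$ project to rational curves of total degree $3$ covering each factor, forcing (by the low-degree cases of the extremal $m$-RC classification in \cite{PT}) one factor to be $\p^1$ covered by lines and the other a quadric $3$-RC by conics, whence $X=\mathrm{Seg}(\p^1\times Q^{n-1})$; then running the $X\to J$ construction of Theorem \ref{T:XJ} on this Segre variety yields a Jordan pair that splits, giving $\mathbb J\cong\mathbb C\times\mathbb J'$.

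For (b), I would identify the elementary quadratic transformation $\pi_{p'}\circ\pi_p^{-1}$ (Example \ref{EX:bir}.(3)) with the adjoint map of $\mathbb C\times\mathbb J'$: choosing the quadric $Q^{n-1}\subset\p^n$ to be the projectivized null-cone of $N'\oplus(-\,\cdot\,)$ on $\mathbb C\oplus\mathbb J'$, the projections from two general points of $Q$ are precisely the two ``Spampinato-type'' inversion maps attached to the rank $2$ norm, and their composite is, after linear changes at source and target, the bidegree $(2,2)$ involution $[x_0:x']\mapsto [x_0 x'^{\#} : N'(x')]$ — which one recognizes as $\#_{\mathbb C\times\mathbb J'}$ since in a rank $3$ product algebra $\mathbb C\times\mathbb J'$ the adjoint is exactly $(\lambda,x')^{\#}=(N'(x'),\lambda x'^{\#})$. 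The converse uses Theorem \ref{P:jordanXf}: an elementary quadratic transformation $f$ has base locus a smooth quadric union a point (classically), and reconstructing $\mathbb J_f$ via $P_f=-d(j_f)^{-1}$ as in the proof of Theorem \ref{P:jordanXf} produces an algebra in which the generic element satisfies a cubic that factors as (linear)$\times$(quadratic), hence $\mathbb J_f\cong\mathbb C\times\mathbb J'$ by idempotent/Peirce decomposition. The identification of the ``moreover'' list — Segre embeddings $\mathrm{Seg}(\p^1\times Q^{n-1})$, algebras $\mathbb C\times\mathbb J'$ with $\rk\mathbb J'=2$, elementary quadratic transformations — then follows since each of (i)--(iii) has been pinned down to exactly this one family on each side.

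\textbf{Main obstacle.} The routine direction is ``product $\Rightarrow$ everything'': the factorizations of $\mu_{\mathbb J}$, of $\#_{\mathbb J}$, and of the norm/adjoint for a direct product are essentially bookkeeping, and the rank constraint $3=1+2$ makes the shape of the factors automatic. The genuine work is the converse for (i): showing that a variety in $\boldsymbol X^n(3,3)$ which happens to be a cartesian product must be the Segre $\mathrm{Seg}(\p^1\times Q^{n-1})$. This is where one must actually invoke the structure theory of extremal varieties $m$-covered by rational curves from \cite{PT} — analyzing how a $3$-RC family of twisted cubics interacts with the product structure, ruling out $X''$ having dimension $0$ or being itself of the excluded scroll type, and forcing the numerical splitting of the twisted cubic's multidegree as $(1,2)$. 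Once that geometric input is secured, feeding the Segre variety into the $X\to J$ machine of Theorem \ref{T:XJ} to exhibit the product Jordan structure is again formal, so I expect essentially all the difficulty to be concentrated in that single geometric reduction, which I would either prove directly via tangential projection (a general tangential projection of a product is a product, and $\p^{n-\delta}$ is not a nontrivial product unless $\delta=n-1$) or cite from \cite{PT} if it is available there in the required form.
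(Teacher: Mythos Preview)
Your proposal is correct and, for the one nontrivial implication, coincides with the paper's argument: if $X=X_1\times X_2\in\boldsymbol X^n(3,3)$, the $3$-covering twisted cubics have bidegree $(d_1,d_2)$ with $d_1+d_2=3$, so one factor is $3$-RC by lines (hence a linear space, forced by extremality to be $\mathbb P^1$) and the other is $3$-RC by conics (hence a quadric $Q^{n-1}$), giving $X\simeq\mathrm{Seg}(\mathbb P^1\times Q^{n-1})$. The paper's proof is simply much more economical than your plan: it declares (iii)$\Rightarrow$(ii),(i) to be ``clear'' and ``the other implications/conclusions easily follow'', doing only the geometric splitting of (i) in two sentences, whereas you propose to verify the norm/adjoint factorization, the Segre identification of $\mu_{\mathbb C\times\mathbb J'}$, and the Peirce decomposition of $\mathbb J_f$ explicitly --- all correct but not needed once the $XJC$-correspondence is in hand.
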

\begin{proof} Clearly (iii) implies (ii) and (i). If (i) holds and if $X=X_1\times X_2\subset\p^{2n+1}$, then we can suppose that through three general points
of $X_1\subset\p^{2n+1}$ there passes a line and that through three general points of $X_2\subset\p^{2n+1}$ there passes a conic.
Then $X_1$ is a line and $X_2$ is a quadric hypersurface in its linear span. Thus $X$ is projectively equivalent to the Segre embedding of
$\p^1\times Q^{n-1}$. The other implications/conclusions easily follows.
\end{proof}   
   
 \subsection{\bf A second occurrence of the $XJC$-principle }
In this subsection, we  relate the smoothness property in   the $X$-world 
to an algebraic 
 one in  the $J$-world and to another one in the $C$-world.  
We introduce these properties.\smallskip 
 
  By definition,  the {\it radical} $R$ of  a Jordan algebra $\mathbb J$, indicated by $\Rad(\mathbb J)$,  is defined as the biggest solvable ideal of $\mathbb J$ (see also Property \ref{P:radical} below for a characterization of the radical when $\mathbb J$ has rank 3).  Then $\mathbb J$ is said to be  {\it semi-simple}  if $\Rad(\mathbb J)=0$. In this case, a classical result 
  of the theory asserts that $\mathbb J$ is isomorphic 
to a  finite direct product $   \mathbb J_1\times  \cdots \times    \mathbb J_m$ 
where the $\mathbb J_k$'s are {\it simple} Jordan algebras, that is 
 Jordan algebras without any non-trivial ideal. 
 \medskip 
 
Following \cite{ST} and \cite{EinShB}, a Cremona transformation  $f:\p^{n-1}\map\p^{n-1}$ is called {\it semi--special} if the base locus scheme of
$f$ is smooth. A Cremona transformation is said to be {\it special} if the base locus scheme is smooth and irreducible.
Thus special Cremona transformations $f:\p^{n-1}\map\p^{n-1}$ can be solved, as rational maps, by a single blow-up along an irreducible smooth variety while
semi--special Cremona transformations can be solved by blowing--up smooth irreducible subvarieties of $\p^{n-1}$, that is there are no ``infinitely near
base points".  In conclusion the 
 semi--special Cremona transformations are the simplest objects
from the point of view of Hironaka's resolutions of rational maps.
 \medskip

 Assume that $X, \mathbb J$ and $f$ are corresponding objects.
 
 \begin{thm}\label{T:simple}  The following assertions are equivalent:
 \begin{enumerate}
 \item[{\rm (i)}] the variety $X$ is smooth;
 \item[{\rm (ii)}] the algebra $\mathbb J$ is semi-simple;
 \item[{\rm (iii)}] the Cremona transformation $f$ is semi-special. 
\end{enumerate}
Moreover, the classification of the objects satisfying these properties is given in  the table
below and $f$ is semi--special
but not special if and only if it is an elementary quadratic transformation associated to a smooth quadric.
  \end{thm}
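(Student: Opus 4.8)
The plan is to prove the chain of equivalences (i) $\Leftrightarrow$ (ii) $\Leftrightarrow$ (iii) by moving through the $XJC$-correspondence established in Theorems \ref{T:main}, \ref{P:jordanXf} and \ref{T:XJ}, so that it suffices to prove the two equivalences (i) $\Leftrightarrow$ (ii) and (ii) $\Leftrightarrow$ (iii) directly and then invoke commutativity of the $XJC$-diagram. Since $X$, $\mathbb J$ and $f$ are assumed to be corresponding objects, we may freely replace $X$ by $X_{\mathbb J}$, and $f$ by the adjoint involution $\#_{\mathbb J}$. First I would recall the structural fact that a rank $3$ Jordan algebra $\mathbb J$ decomposes, as a vector space, as $\mathbb J = \Rad(\mathbb J) \oplus \mathbb J_{ss}$ with $\mathbb J_{ss}$ semisimple, and that $\mathbb J$ is semisimple if and only if $\Rad(\mathbb J)=0$, in which case $\mathbb J$ is a product of simple rank $\le 3$ Jordan algebras; the simple ones of rank exactly $3$ are, by the classical classification, $\C$, $\C\times \C$ (rank $2$ factors aside), the rank $3$ algebras $\C \oplus Q^{n-2}$ of a quadratic form, $M_3(\C)^+$, the symmetric $3\times 3$ matrices, and the $\mathrm{Herm}_3(\mathbb A)$ for $\mathbb A \in \{\C, \C^2, \C^4\otimes\text{(split)}, \mathbb O_\C\}$. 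These are exactly the algebras whose twisted cubics $X_{\mathbb J}$ are the smooth Scorza-type varieties listed in the table, so (ii) $\Rightarrow$ (i) together with the classification will follow from a case-by-case inspection of $X_{\mathbb J}$ (Segre, the quadric cases, and the four Severi-type varieties) together with Proposition \ref{P:product} for the product structure.

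For the converse (i) $\Rightarrow$ (ii), the natural route is contrapositive: if $\Rad(\mathbb J) \ne 0$, exhibit a singular point of $X_{\mathbb J}$. Here I would use the explicit affine parametrization $\mu_{\mathbb J}(x) = [1:x:x^\#:N(x)]$ and analyze the point $\infty_{\mathbb J}=[0:0:0:1]$, or more precisely use that $X_{\mathbb J}$ contains the linear spaces $T_{0_{\mathbb J}}X_{\mathbb J}$ and $T_{\infty_{\mathbb J}}X_{\mathbb J}$ and that the base locus of the tangential projection $\pi_x$ is the Hilbert scheme of lines through $x$ contained in $X$ (fact (c) of \S\ref{S:fromXworld}). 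A nilpotent element $z \in \Rad(\mathbb J)$ with $z^\# = 0$ (which exists when the radical is nonzero, since $z^\#$ lies again in the radical and one can take $z$ in a suitable power of $\Rad$) produces the line $t \mapsto [1: tz : 0 : 0]$ through $0_{\mathbb J}$ inside $X_{\mathbb J}$, and the presence of such lines forces the base locus scheme of $\varphi_{X,x}$ — equivalently the base scheme of the Cremona map $f = \#_{\mathbb J}$ — to be non-smooth, hence by fact (c) the point $x$ of $X$ is such that the space of lines through it is non-reduced or positive-dimensional in the wrong way; translating this into a computation of the tangent cone of $X_{\mathbb J}$ at a suitable point shows $X_{\mathbb J}$ is singular. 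The cleanest formulation is in fact the equivalence (ii) $\Leftrightarrow$ (iii), which I would establish first: the base locus scheme $\mathcal B_f$ of $f = \#_{\mathbb J}$ is the zero scheme of the adjoint $x \mapsto x^\#$ in $\p(\mathbb J)$, and smoothness of $\mathcal B_f$ translates, via the differential of $x \mapsto x^\#$ at its zeros, exactly into the nondegeneracy condition characterizing semisimplicity of $\mathbb J$; one uses here that for $[z]\in\mathcal B_f$ one has $z^\# = 0$, that $N(z)=0$ automatically, and that the Jacobian of the adjoint at $z$ has the expected rank if and only if $z$ is not a "degenerate" nilpotent, a condition equivalent to $\Rad(\mathbb J)=0$ after going through the classification of the possible base loci (unions of Veronese/Segre varieties for the semisimple cases).

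The remaining content is the explicit table: for each semisimple $\mathbb J$ one reads off $X_{\mathbb J}$ as a classically known variety — $\mathrm{Seg}(\p^1\times Q^{n-1})$ and its degenerations for $\mathbb J = \C \times (\C\oplus Q)$, and the varieties $\mathbb G(1,5)$, $\mathrm{LG}(3,6)$-type, $\p^2\times\p^2$-type Segre-Veronese objects, and the $E_7$-Freudenthal variety for the four $\mathrm{Herm}_3(\mathbb A)$ — and conversely one checks these are the only smooth elements of $\boldsymbol{X}^n(3,3)$, which is exactly the content of Theorem \ref{T:main} applied to the already-known classification of smooth rank $3$ Jordan algebras. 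The last clause, that $f$ is semi-special but not special iff it is an elementary quadratic transformation of a smooth quadric, follows from Proposition \ref{P:product}: special means the base scheme is irreducible, and among the semisimple cases the only one with reducible (but smooth) base locus is the product case $\mathbb J = \C\times\mathbb J'$ with $\mathbb J'$ of rank $2$, which by Proposition \ref{P:product} corresponds precisely to the elementary quadratic transformations, the smooth-quadric condition corresponding to $\mathbb J' = \C\oplus Q^{n-2}$ with $Q$ nondegenerate. \textbf{The main obstacle} I anticipate is the direction (i) $\Rightarrow$ (ii): producing, from an abstract nonzero radical, an honest singularity of the projective variety $X_{\mathbb J}$ requires care in choosing the nilpotent element and in computing the local structure of $X_{\mathbb J}$ near the relevant point, and it is here that one genuinely uses the rank-$3$ identity $(x^\#)^\# = N(x)x$ and the behavior of the adjoint on the radical; the other implications are comparatively formal given the earlier theorems and the classical classification.
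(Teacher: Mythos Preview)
The paper's own proof is almost entirely by citation: it invokes \cite[Proposition~5.6]{PR} for the equivalence (ii) $\Leftrightarrow$ (iii) (the classification of semi-special quadro-quadric Cremona transformations and their identification with adjoints of semi-simple rank $3$ Jordan algebras), notes that the twisted cubics over semi-simple rank $3$ algebras are the well-known smooth varieties in the table, and defers the remaining implication to \cite[Theorem~5.7]{PR}. Your proposal is far more ambitious, aiming at a self-contained argument through the $XJC$-correspondence and Jordan structure theory. Much of what you outline is sound in spirit: (ii) $\Rightarrow$ (i) does reduce to inspecting the finite list of semi-simple rank $3$ algebras, and your treatment of the last clause via Proposition~\ref{P:product} is correct.

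There is, however, a genuine gap in your attack on (i) $\Rightarrow$ (ii). You take a nilpotent $z \in \Rad(\mathbb J)$ with $z^\# = 0$ and observe that $t \mapsto [1:tz:0:0]$ is a line in $X_{\mathbb J}$ through $0_{\mathbb J}$. This is correct, but it does \emph{not} imply that $X_{\mathbb J}$ is singular, nor that the base locus $\mathcal B_f$ of $f = \#_{\mathbb J}$ is non-smooth: every smooth variety in the table (e.g.\ $G_3(\mathbb C^6)$, $LG_3(\mathbb C^6)$) is swept out by lines through a general point, and by fact~(c) of \S\ref{S:fromXworld} the Hilbert scheme of those lines is exactly $\mathcal B_f$, which in those cases is smooth. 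Exhibiting a point $[z]\in\mathcal B_f$ is not enough; what you would need is that the differential $d\#_z$ has a kernel strictly larger than the local dimension of $\mathcal B_f$, and this is precisely where the characterization $\Rad(\mathbb J)=\{x : d^2N_x=0\}$ of (\ref{E:RadicalRank3}) has to enter in a nontrivial way. Your sentence ``translating this into a computation of the tangent cone of $X_{\mathbb J}$ at a suitable point shows $X_{\mathbb J}$ is singular'' is in fact the entire content of the implication, and it is not carried out. You rightly flag this step as the main obstacle, but the proposed route through ``existence of a line'' does not reach it; the paper itself does not attempt this either and simply cites \cite[Theorem~5.7]{PR}.

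A secondary point: your sketch of (ii) $\Leftrightarrow$ (iii) also hides a real classification result. Showing that a smooth base locus forces $\mathbb J$ to be semi-simple is not a purely local Jacobian computation; it ultimately rests on the classification of semi-special quadro-quadric Cremona transformations (going back to Ein--Shepherd-Barron for the special case), which is exactly what the paper imports from \cite[Proposition~5.6]{PR}.
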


  \begin{table}[H]\label{tableJordan}
  \centering
  \begin{tabular}{|c|c|c|}
 
  \hline
\begin{tabular}{c}  {\bf Semi-simple rank 3 }  \\
{\bf Jordan algebra} 
\end{tabular}
   &  
   \begin{tabular}{c} 
   {\bf Smooth variety $X^n\subset \mathbb P^{2n+1}$,  3-RC} \\ 
 {\bf   by cubics, not of Castelnuovo type}
   \end{tabular}
   & 
   \begin{tabular}{c}    {\bf Special Cremona}\\ 
  {\bf 
 transformation}
\end{tabular}
 \\
 \hline     
   \begin{tabular}{l}
direct product $ \mathbb C\times \mathbb J$ with \\ 
 $\mathbb J$ rank 2   Jordan algebra
\end{tabular}
 &     
\begin{tabular}{l} Segre embedding 
${\rm Seg}(\mathbb P^1\times Q^{n-1})$
  \\
with $Q^{n-1}$ smooth hyperquadric
\end{tabular} & 
\begin{tabular}{c}
elementary \\
quadratic
\end{tabular}
  \begin{tabular}{c} 
  \\ 
    \end{tabular}
\\
\hline     
$ {\rm Herm}_3(\mathbb R_{\mathbb C})\simeq {\rm Sym}_3(\mathbb C)$  
 &     
\begin{tabular}{l} 6-dimensional Lagrangian  \\
grassmannian  $LG_3(\mathbb C^6)\subset \mathbb P^{13}$
\end{tabular} &$[x]\map [x^{\#}]$\\

  \hline
${\rm Herm}_3(\mathbb C_{\mathbb C})  \simeq M_3(\mathbb C)$  
 &     
\begin{tabular}{l} 9-dimensional Grassmannian \\
manifold $G_3(\mathbb C^6)\subset \mathbb P^{19}$
\end{tabular} &$[x]\map [x^{\#}]$\\
  \hline
$ {\rm Herm}_3(\mathbb H_{\mathbb C})\simeq{\rm Alt}_6(\mathbb C)$ 
 &     
\begin{tabular}{l} 15-dimensional orthogonal  \\
Grassmannian  $OG_6(\mathbb C^{12})\subset \mathbb P^{31}$
\end{tabular}& $[x]\map [x^{\#}]$\\
  \hline %\hspace{0.85cm}
$ 
 {\rm Herm}_3(\mathbb O_{\mathbb C})
$   & 
\begin{tabular}{l} 27-dimensional $E_7$-variety  
in   $ \mathbb P^{55}$
\end{tabular} &$[x]\map [x^{\#}]$
\\
  \hline
\end{tabular}
\vspace{0.15cm}
\end{table}

\begin{proof} Semi--special Cremona transformations are classified and they correspond to semi--simple Jordan algebras of rank 3, see
for example \cite[Proposition 5.6]{PR}, showing the equivalence between (ii) and (iii). It is known that the twisted cubics associated
to semi--simple Jordan algebras are smooth and they are described in the table 
 above.
{We proved the remaining implications in \cite[Theorem 5.7]{PR}}. 
 \end{proof}

    \subsection{\bf A  generalization of the $XJC$-equivalence covering some degenerate cases}
    \label{S:generalizedXJC}
   In order to formulate our main result we did not consider some extremal varieties 3-RC by cubics as well as  quadro-quadric Cremona transformations equivalent (as rational maps) to linear projective automorphisms.  In fact, the $XJC$-equivalence
    can be extended in order to cover these ``degenerated objects"
  as we shall see
   briefly in this subsection.

 \subsubsection{}   Let $\overline {\boldsymbol X}^n\!(3,3)$ be the set of projective equivalence classes of extremal $n$-dimensional irreducible varieties $X\subset \mathbb P^{2n+1}$ that are $3$-RC by twisted cubics.  It is just the union of $ {\boldsymbol X}^n(3,3)$ with the projective equivalence classes of the  scrolls    $S_{1,\ldots,1,3}$  (with  $n\geq 1$) and $S_{1,\ldots,1,2,2}$  (with $n>1$). 

\subsubsection{} By definition, a {\it norm} on a Jordan algebra $\mathbb J$ is
 a homogeneous form $\eta\in {\rm Sym}(\mathbb J^*)$ verifying $\eta(e)=1$ 
 and which  decomposes as a product of powers of the irreducible components 
of the generic norm $N$ of $\mathbb J$, see \cite{bk}. 
Then one defines   $\widetilde{\boldsymbol {\mathcal J \!ordan}}_{3}^{\,n}$ 
as the set of {\it  Jordan algebras with a cubic norm}, which by definition is 
 the set of pairs $(\mathbb J,\eta)$ where $\eta$ is a cubic norm on the Jordan 
algebra $\mathbb J$.   Since a rank 3 Jordan algebra admits a single cubic norm (the generic one),  
 ${\boldsymbol {\mathcal J \!ordan}}_{3}^n$ can be considered 
as a subset  of $\widetilde{\boldsymbol {\mathcal J \!ordan}}_{3}^{\,n}$.
   A Jordan algebra with a cubic norm is necessarily of rank less than or equal to $ 3$ and if the rank is less than 3, then it is isomorphic to 
one of the  following Jordan algebras:  
\begin{itemize}
\item  the rank 1 Jordan algebra $\mathbb C$, denoted by $\mathbb J_0^1$;\smallskip 
\item  the rank 2 Jordan algebra of Example  \ref{EX:jordan} (\ref{Ex:rank2}) with $q=0$, denoted by $\mathbb J_0^n$; \smallskip 
\item  the  rank 2 Jordan algebra of Example  \ref{EX:jordan} (\ref{Ex:rank2})  with $q$ of rank 1, denoted by $\mathbb J_1^n$.
\end{itemize}
The notation $\mathbb J_0^n$ is consistent since the `Jordan algebra' $\mathbb C$ can be described as in Example  \ref{EX:jordan}  (\ref{Ex:rank2}) by taking $W$ of dimension 0.\smallskip

For any $n\geq 1$,  the algebra $\mathbb J_0^n$ admits  a unique cubic norm, namely $\eta(\lambda,w)=\lambda^3$. For any $n>1$, the algebra $\mathbb J_1^n$ admits a 1-dimensional family of cubic norms.  Indeed, the generic norm on $\mathbb J_1^n$ is given by $N(\lambda,w)=\lambda^2+q(w)$. Since $q$ has rank 1, there exits a linear form $\ell\in W^*$ such that $q=\ell^2$ so that $N=\ell_+\, \ell_-$ with $\ell_\pm(\lambda,w)=\lambda\pm i \ell(w)$ for $(\lambda,w)\in \mathbb J_1^n$.  Then for every nonzero $(a,b)\in \mathbb C^2$, $\eta_{a,b}=(a \ell_++b \ell_-)  \ell_+ \ell_-$ is a cubic norm on  $\mathbb J_1^n$. \smallskip 

 One verifies that modulo isomorphism, 
 $\widetilde{{\boldsymbol {\mathcal J \!ordan}}}_{3}^n\setminus  {\boldsymbol {\mathcal J \!ordan}}_{3}^n$ consists of the two pairs $(\mathbb J_0^n,\lambda^3)$ and $(\mathbb J_1^n,{\eta_{a,b}})$
 when $n>1$, and reduces to $(\mathbb J_0^1,\lambda^3)$ when $n=1$.
 Let us define
$\overline{\boldsymbol {\mathcal J \!ordan}}_{3}^{\,n}$ as the set of pairs $(\mathbb J,\eta)$ verifying the compatibility relation
in Lemma \ref{L:crucial}, that is the partial derivatives of $\eta$ belong to the ideal generated by the quadratic forms defining the adjoint map. Thus when $n>1$, $\overline{\boldsymbol {\mathcal J \!ordan}}_{3}^{\,n}$ is the union of 
${\boldsymbol {\mathcal J \!ordan}}_{3}^{\,n}$ with the isomorphism classes of 
$(\mathbb J_0^n,\lambda^3)$ and of $(\mathbb J_1^n,\eta_{1,0})$. We shall indicate by $\widetilde{\mathbb J}^n_1$ the Jordan algebra
$\mathbb J_1^n$ with cubic norm $\eta_{1,0}$.

\subsubsection{}
\label{S:extCworld}
 Finally, let us return  to the corresponding Cremona transformations
to be considered in order to complete the picture. 
Consider   the set of  {\it normed quadro-quadratic Cremona transformation} of $\mathbb P^{n-1}$, that is of pairs $(f,[\eta])$ where $f=[f_1:\cdots:f_n]$  is  a birational map of $\mathbb P^{n-1}$ defined by quadratic forms $f_i$ and $[\eta]=\mathbb C^*\eta
 $ is the class of a  non-trivial  cubic form $\eta $  such that there exists a quadratic map  $G$  satisfying 
$G\big( f_1(x),\ldots, f_n (x)  \big)=\eta (x)x$ for every $x$. 
 Clearly, given   $f  \in  {{{\bf Bir}_{2,2}}}(\mathbb P^{n-1})$, there exists a unique $[\eta ]$ as above such that $(f,[\eta ])$ is a normed quadro-quadratic Cremona transformation and such that $(f,[\eta])$ satisfies the condition of Lemma \ref{L:crucial}. Therefore ${{{\bf Bir}_{2,2}}}(\mathbb P^{n-1})$ can be considered as a subset of 
${\overline{{\bf Bir}_{2,2}}}(\mathbb P^{n-1})$, which by definition it is the set of pairs $(f,[\eta])$ satisfying the compatibility relation in Lemma \ref{L:crucial}.  One verifies easily that if $(f,[\eta])\in {\overline{{\bf Bir}_{2,2}}}(\mathbb P^{n-1})$ but with $f$ not of bidegree $(2,2)$, then $(f,[\eta])$ is linearly equivalent to one of the following:
\begin{itemize}
\item  $({\rm Id}_{\mathbb P^{n-1}}, [\ell^3]) $ where  $\ell$ is a nonzero linear form, $n\geq 1$; \smallskip    
\item  $({\rm Id}_{\mathbb P^{n-1}}, [\ell^2\ell']) $ where  $\ell$  and $\ell'$ are  linearly independent linear forms,  $n> 1$.
\end{itemize}
\medskip 

Then the $XJC$-correspondence extends: there are bijection extending the ones in the $XJC$-diagram such that one has a commutative triangle  of equivalences  between the sets  introduced above: 
  \begin{equation*}
    \xymatrix@R=0.75cm@C=0.5cm{ 
  {\overline{ \boldsymbol { X}}}^n(3,3)_{\!\big/ \!\!\!
{\tiny{\begin{tabular}{l}
$projective$\\
$equivalence$
\end{tabular}}
}}
  \ar@{<->}[rr]  \ar@{<->}[rd]
  &        &   
     {\overline{\boldsymbol {\mathcal J \!ordan}}_{3}^{\, n}}_{\! \!\big/  isotopy} 
   \\ 
& 
{\overline{{\bf Bir}_{2,2}}}(\mathbb P^{n-1})_{\!\big/ \!\!\!
{\tiny{\begin{tabular}{l}
$linear$\\
$equivalence$
\end{tabular}}
}}
.   \ar@{<->}[ru] &  }
 \end{equation*}

For instance, let us explain how to associate an extremal variety 3-RC by cubics in $\mathbb P^{2n+1}$ 
to  a degenerate element  $( f,[\eta])$ of ${\overline{{\bf Bir}_{2,2}}}(\mathbb P^{n-1})$.  Let $F$ be a quadratic affine lift of $f$. 
Then as in  section \ref{S:CtoX}, one defines 
$ X_{f,[\eta]}$ as the Zariski closure of the image of 
the affine map $x\mapsto [1:x:F(x):\eta(x)]$.  
This variety  belongs to  ${\overline{ \boldsymbol { X}}}^n(3,3)$:  we let the reader 
verify that  the proofs of section \ref{S:CtoX} 
apply if one takes for $G$ the unique affine quadratic map 
such that $G(F(x))=\eta(x)x$ for every $x$. 

By the way let us remark that for $(\alpha,\beta)\in \mathbb C^2$
 such that $\alpha\beta\neq 0$, setting $\ell_{\alpha,\beta}=\alpha\ell_++\beta\ell_-$, one associates a  non-degenerate $n$-dimensional  variety in $\p^{2n+1}$ to the   pair $(\mathbb J_1^n,\eta_{\alpha,\beta})$  by defining it as the closure of the image of the 
  affine map 
$(\lambda, w)\mapsto [1:\lambda:w:\ell_{\alpha,\beta}(\lambda,w)\lambda:-\ell_{\alpha,\beta}(\lambda,w)w:\ell_{a,b}(\lambda,w) (\lambda^2+q(w))]$. 
 However, this variety is not 3-covered by twisted cubics 
since the compatibility relation in Lemma \ref{L:crucial}
 is not satisfied by $\eta_{\alpha,\beta}$.
\medskip 

In fact, this generalization of the $XJC$-correspondence does 
not present a very deep interest since it covers 
only two new cases  when  $n>1$, namely the ones described in the following two tryptics: 
\begin{equation*}
\begin{tabular}{ccc}
$    \xymatrix@R=0.75cm@C=0.5cm{ 
\big[   S_{1,\ldots,1,3} \big]   \ar@{<->}[rr]  \ar@{<->}[rd]
  &        &    \big[ \mathbb   J_0^n  \big]    \\ 
%&   & \\
&  \big[   ({\rm Id},[\ell^3]) \big]   \ar@{<->}[ru] &  }
$ &\qquad   &
 $ \xymatrix@R=0.75cm@C=0.5cm{ 
\big[   S_{1,\ldots,1,2,2} \big]   \ar@{<->}[rr]  \ar@{<->}[rd]
  &        &    \big[ \widetilde{\mathbb  J}_1^n  \big]    \\ 
%&   & \\
&  \big[   \big({\rm Id},[\ell^2\ell']\big) \big].   \ar@{<->}[ru] &   }$
 \end{tabular}
\end{equation*}\smallskip

In dimension $n=1$, one has 
$${\overline{\boldsymbol X}}^1\!(3,3) /{\! \! \!
{\small{\begin{tabular}{l}
$proj.$
\end{tabular}}}} \!\!\!=\big[   v_3(\mathbb P^1)  \big] 
\, , \quad 
 {\overline{\boldsymbol {\mathcal J \!ordan}}}_{3}^1/{\! \!\!
{\small{\begin{tabular}{l}
$isot.$
\end{tabular}}}} \!\!\!= \big[   \mathbb C \big] 
\quad \mbox{and}\quad
{\overline{{\bf Bir}_{2,2}}}(\mathbb P^{0}) /{\! \!\!
{\small{\begin{tabular}{l}
$lin.$
\end{tabular}}}} \!\!\!
= \big[  ( {\rm Id},x^3) \big]$$
hence the  generalized $XJC$-equivalence reduces in this case  to the following trivial tryptic
\begin{equation}
\label{E:v3P3}
    \xymatrix@R=0.75cm@C=0.5cm{ 
\big[   v_3(\mathbb P^1)  \big]   \ar@{<->}[rr]  \ar@{<->}[rd]
  &        &    \big[   \mathbb C  \big]    \\ 
%&   & \\
&  \big[   ({\rm Id}_{\mathbb P^0} ,x^3)
 \big].   \ar@{<->}[ru] &  }
 \end{equation}

Despite the very small number of new cases covered
 by the generalized $XJC$-correspondence, we inserted this extension into the discussion 
in order to show that  the elementary case  (\ref{E:v3P3}) 
can be included in the whole picture.  
Moreover,  the notion of  {\it normed quadro-quadric birational map}
 introduced in \ref{S:extCworld} will be used also
 to describe the general structure of Cremona transformations
 of bidegree $(2,2)$ in the next section.

\section{Further applications}

The theory of Jordan algebras is now  well established.  We recall some  general results on  the structure of Jordan algebras,  focusing 
especially  on rank 3 algebras.

\subsection{Some results on the structure of Jordan algebras} 
\label{S:Jordan-II}
Let  $\mathbb J$ be a fixed Jordan algebra of arbitrary rank $r\geq 1$. 
 For any subset $A\subset \mathbb J$,  one defines inductively the subsets $A^{(n)}\subset \mathbb J$ for any integer $n>0$ by setting 
$A^{(1)}=A$  and  $A^{(k+1)}=(A^{(k)})^2$ for every $k>0$.  If $A$ is a subalgebra of $\mathbb J$, the $A^{(n)}$ form a decreasing sequence of subalgebras $A=A^{(1)} \supset 
A^{(2)} \supset A^{(3)}\supset \cdots$.  By definition, $A$ is 
{\it solvable} if $A^{(t)}=0$ for a positive integer $t$.

 If $I_1,I_2$ are two solvable ideals of $\mathbb J$, it can be verified that $I_1+I_2$ is solvable too. Since $\mathbb J$ is finite dimensional, the union of all the solvable ideals of $\mathbb J$ is a  solvable ideal of $\mathbb J$, which is maximal for inclusion and which is 
 called  the {\it radical} of $\mathbb J$ and  denoted by ${\rm Rad}(\mathbb J)$,  or just by $R$ if there is no risk of confusion. \smallskip 
 
 The notion of solvability  introduced  above is not the most useful when working with Jordan algebras. Indeed,
 it can occur that for an ideal  $I\subset \mathbb J$, the subsets $I^{(k)}$ are not ideals  for some $k>2$.    Hence  it is not possible to construct   inductively a solvable ideal $I$  from its derived series $I=I^{(1)} \supset  I^{(2)} \supset \cdots  \supset I^{(r-1)} \supset  I^{(r)}=0$.
  To bypass this technical difficulty, Penico introduced in \cite{penico} the nowadays called {\it Penico's series} of an ideal $I$ as the family $I^{[k]}$, $k\geq 0$ defined inductively by 
 $$I^{[0]}=\mathbb J,\quad  
  I^{[1]}=I\quad \mbox{ and }  \quad
     I^{[k+1]}=\big(I^{[k]}\big)^2+ \big(I^{[k]}\big)^2 \mathbb J   \quad \mbox{ for } \, k\geq 1.$$

 The interest of this notion is twofold. First of all, it can be proved that $I$ is solvable if and only if it is {\it Penico-solvable}, that is if $I^{[s]}=0$ for a positive integer $s$. Moreover,  $I^{[k]}$ is an ideal for any $k\geq 1$, see \cite{penico}. 
 
 The notions introduced by Penico are more relevant than the classical ones to describe the structure of Jordan algebras. 
 Since $R$  is solvable, there exists a positive integer $t\geq 1$ such that $R^{[t]}=0$ and $R^{[t-1]}\neq 0$.  Since  the $R^{[k]}$'s are ideals in $\mathbb J$, the quotients  $\mathbb J^{[k]}=\mathbb J/ R^{[k]}$  are Jordan algebras for every $k\geq 1$, yielding, for 
 $\ell=2,\ldots,t$, the exact sequences:  
 \begin{equation}
 \label{E:nullradicalEXT}
  0\rightarrow {R^{[\ell-1]}}/{R^{[\ell]}}\longrightarrow \mathbb J^{[\ell]}
 \longrightarrow\mathbb J^{[\ell-1]}
 \rightarrow  0.
 \end{equation}
 
 Remark that the left hand side in these exact sequences  is an  ideal with trivial product because  $({R^{[\ell-1]}}/{R^{[\ell]}})^2=0$ for every $\ell$.  In the terminology of Jordan algebras, one says that $\mathbb J^{[\ell]}$ is a {\it null extension} of $\mathbb J^{[\ell-1]}$. 
 We can now recall the following important result:

 \begin{thm}[Albert \cite{albert}, Penico \cite{penico}] 
 \label{T:AlbertPenico}${}^{}$
 
 \begin{enumerate}
 \item[(1)] The quotient $\mathbb J_{\ss}=\mathbb J^{[1]}=\mathbb J/R$ is semi-simple, that is  ${\rm Rad}(\mathbb J_{\ss})=0$ or equivalently $\mathbb J_{\ss}$ is isomorphic to a direct product of simple Jordan algebras. \smallskip

   \item[(2)] The exact sequence of (non-unital) Jordan algebras  
 $
 0\rightarrow R \rightarrow \mathbb J\rightarrow \mathbb J_{\ss} \rightarrow 0 
 $ splits:    there exists an embedding of Jordan algebras $\sigma: \mathbb J_{\ss} \hookrightarrow \mathbb J$ such that 
 $\mathbb J=   \sigma( \mathbb J_{\ss}) \rtimes  R$. Moreover the embedding $\sigma$ is unique up to composition to the left by an automorphism of $\mathbb J$.\smallskip

  \item[(3)] The Jordan algebra $\mathbb J$ is obtained from its semi-simple part $\mathbb J_{\ss}$ by  the series of successive  null radical extensions  (\ref{E:nullradicalEXT}).
 \end{enumerate}
 \end{thm}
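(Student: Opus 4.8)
The plan is to follow the classical argument of Albert and Penico; since the statement is recalled here only as structural background, I indicate the shape of the proof rather than all the bookkeeping. Throughout write $R=\Rad(\mathbb J)$, and recall from the discussion above that an ideal is solvable if and only if it is Penico-solvable and that the Penico powers $I^{[k]}$ of an ideal are again ideals.

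First I would prove (1). If $\bar I\subset\mathbb J_{\ss}=\mathbb J/R$ were a nonzero solvable ideal, let $I\subset\mathbb J$ be its preimage: this is an ideal strictly containing $R$ with $I/R$ Penico-solvable, say $(I/R)^{[s]}=0$, so that $I^{[s]}\subseteq R$; since $R$ is Penico-solvable, say $R^{[t]}=0$, iterating the Penico construction gives $I^{[s+t-1]}=0$, whence $I$ is solvable, contradicting the maximality of $R$. Thus $\Rad(\mathbb J_{\ss})=0$, and the classification of finite-dimensional semi-simple Jordan algebras over $\mathbb C$ then identifies $\mathbb J_{\ss}$ with a finite direct product of simple factors.

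Next (2), the Wedderburn principal theorem for Jordan algebras. I would induct on the least $t$ with $R^{[t]}=0$. If $t\le 1$ then $R=0$ and there is nothing to do. For the inductive step, $\mathbb J/R^{[t-1]}$ has radical $R/R^{[t-1]}$ of Penico-length $t-1$ and semi-simple quotient $\mathbb J_{\ss}$, so by induction it contains a copy of $\mathbb J_{\ss}$ splitting its projection onto $\mathbb J_{\ss}$; pulling this copy back to $\mathbb J$ gives a subalgebra $\mathbb J_1\supseteq R^{[t-1]}$ with $\mathbb J_1/R^{[t-1]}\cong\mathbb J_{\ss}$ and $(R^{[t-1]})^2\subseteq R^{[t]}=0$. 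One is thus reduced to splitting a square-zero null extension $0\to N\to\mathbb J_1\to\mathbb J_{\ss}\to 0$ of a semi-simple algebra; here $N$ is a $\mathbb J_{\ss}$-module, and the splitting is produced by the vanishing of the second Jordan cohomology $H^2(\mathbb J_{\ss},N)=0$ for $\mathbb J_{\ss}$ semi-simple (the Jordan analogue of Whitehead's lemma), while uniqueness of the splitting up to an automorphism of $\mathbb J$ follows, by a filtration-of-$R$ argument, from $H^1(\mathbb J_{\ss},N)=0$. Finally $\sigma(\mathbb J_{\ss})\cap R=0$ for dimension/solvability reasons and $\dim\sigma(\mathbb J_{\ss})+\dim R=\dim\mathbb J$, so $\mathbb J=\sigma(\mathbb J_{\ss})\rtimes R$.

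Part (3) is then a matter of unwinding definitions: for every $\ell\ge 1$ the radical of $\mathbb J^{[\ell]}=\mathbb J/R^{[\ell]}$ is $R/R^{[\ell]}$ (it is solvable, with semi-simple quotient $\mathbb J/R=\mathbb J_{\ss}$ by (1)), so $(\mathbb J^{[\ell]})_{\ss}=\mathbb J_{\ss}$; and in the exact sequence (\ref{E:nullradicalEXT}) the kernel $R^{[\ell-1]}/R^{[\ell]}$ has zero square, so $\mathbb J^{[\ell]}\to\mathbb J^{[\ell-1]}$ is a null extension. Climbing the chain $\mathbb J^{[1]}=\mathbb J_{\ss},\ \mathbb J^{[2]},\dots,\mathbb J^{[t]}=\mathbb J$ exhibits $\mathbb J$ as the asserted tower of null radical extensions. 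The genuinely hard point is the square-zero case of (2): parts (1) and (3) are formal and the description of $\mathbb J_{\ss}$ is known, whereas the splitting rests on the cohomological vanishing $H^1=H^2=0$ with semi-simple coefficients — exactly the point where Albert's original proof was incomplete and had to be corrected by Penico. For the rank-$3$ algebras actually needed in this paper one could instead produce the splitting by hand via a suitable idempotent/Peirce decomposition adapted to $R$, but the cohomological route is the cleanest in general.
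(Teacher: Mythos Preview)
The paper does not give its own proof of this theorem: it is stated as a classical result with attribution to Albert and Penico (references \cite{albert}, \cite{penico}) and is used as structural background for the applications in \S\ref{S:genSTRUCUTRE-Bir22} onwards. So there is no paper proof to compare against.

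Your sketch is a faithful outline of the classical argument. Part (1) is correct: the preimage of a solvable ideal of $\mathbb J/R$ is solvable in $\mathbb J$ (your Penico-length bookkeeping $I^{[s+t-1]}=0$ is the right idea, though one should check the index carefully since Penico powers are taken relative to the ambient algebra, not intrinsically; the inclusion $I^{[s]}\subseteq R$ and then $I^{[s+k]}\subseteq R^{[k]}$ does hold). Part (3) is indeed purely formal once one knows $(R^{[\ell-1]}/R^{[\ell]})^2=0$, which follows from $(R^{[\ell-1]})^2\subseteq R^{[\ell]}$. For part (2), the reduction by induction on Penico-length to a square-zero extension of a semi-simple algebra, and then the appeal to the vanishing of $H^2$ (existence) and $H^1$ (uniqueness up to automorphism) for semi-simple Jordan algebras, is exactly Penico's strategy; your historical remark that this is where Albert's original argument was incomplete is accurate. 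One small point: the uniqueness statement in the theorem is up to automorphisms of $\mathbb J$, and your $H^1$ argument a priori gives uniqueness up to automorphisms that are the identity modulo $R$; you should note that this is enough, or that the two formulations coincide here.
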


Simple Jordan algebras are completely classified so that  the first part of the previous result ensures that the semi-simple parts of an arbitrary Jordan algebra can be completely described.  The second part says that the structure of a general Jordan algebra  $\mathbb J$ is given by   its radical $R$ and by the structure of  $ \mathbb J_{\ss}$-module on it.  Finally, it comes from (3)  that the Jordan product on $R$ as well as its structure of $ \mathbb J_{\ss}$-module 
can be constructed inductively starting from $ \mathbb J_{\ss}$, by successive extensions of a very  simple kind.  \smallskip

\begin{ex}
\label{Ex:Ce/e3}
{\rm Being  associative and commutative, the algebra $A=\mathbb C[\varepsilon]/(\varepsilon^3)
$ can also be viewed as a 3-dimensional  Jordan algebra. One has $R_A=\Rad(A)=\langle \epsilon,  \epsilon^2\rangle $, $R_A^{[2]}=\langle \epsilon^2\rangle $ and 
$R_A^{[3]}=0$.  Thus the semi-simple part  $A_{\ss}=A/\Rad(A)$  has rank 1 and    is isomorphic to $\mathbb C$. }
\end{ex}

In the next section, using the $XJC$-correspondence, we shall state a version of  Theorem 
 \ref{T:AlbertPenico}
for quadro-quadric Cremona transformations and for twisted cubics over Jordan algebras.  We will use the following facts showing that the radical can be determined from the generic norm. 
 \begin{prop}{\rm ( \cite[0.15 and 9.10]{springer})}
\label{P:radical} 
For any Jordan algebra $\mathbb J$, one has 
$$ \Rad(\mathbb J)=\big\{     x\in \mathbb J \,   |  \, N(x+\mathbb J)=N(x) \, \big\}.$$

\end{prop}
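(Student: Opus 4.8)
The condition $N(x+\mathbb J)=N(x)$ is to be understood as an identity of polynomial functions on $\mathbb J$, namely $N(x+y)=N(y)$ for every $y\in\mathbb J$; write $\mathfrak n$ for the set of such $x$. The plan is to show that $\mathfrak n$ coincides with the radical $\mathrm{rad}(\tau)$ of the generic trace form $\tau(a,b)=T(a\bullet b)$, and then to invoke the classical identification $\mathrm{rad}(\tau)=\Rad(\mathbb J)$, which is valid over $\mathbb C$.

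The main tool will be the standard formula relating the differential of the generic norm to the generic adjoint: for all $a,x\in\mathbb J$,
\[
dN_a(x)=\tau\!\left(a^\#,x\right)=T\!\left(a^\#\bullet x\right),
\]
where $a^\#$ is the generic adjoint (so that $a^{-1}=N(a)^{-1}a^\#$ whenever $N(a)\neq 0$). This is immediate on $\mathrm M_3(\mathbb C)^+$, where $N=\det$, $\tau(a,b)=\mathrm{tr}(ab)$ and $a^\#$ is the classical adjugate, and holds in general, see \cite{McCrimmon-book} or \cite{springer}. Alongside it I would use two elementary observations: (i) the invertible elements are Zariski-dense in $\mathbb J$ and satisfy $a^\#=N(a)\,a^{-1}$, so the adjoints $\{a^\#:a\in\mathbb J\}$ span $\mathbb J$ as a vector space; (ii) $\Rad(\mathbb J)\subseteq\mathrm{rad}(\tau)$, because for $r\in\Rad(\mathbb J)$ and $b\in\mathbb J$ the element $r\bullet b$ lies in the nil ideal $\Rad(\mathbb J)$, hence is nilpotent, hence has vanishing generic trace, so $\tau(r,b)=T(r\bullet b)=0$.

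With these in hand the two inclusions are short. If $x\in\mathfrak n$, replacing $y$ by $y-x$ shows that translation by every integer multiple of $x$ fixes $N$, so the polynomial $t\mapsto N(y+tx)$ is the constant $N(y)$; differentiating at $t=0$ gives $dN_y(x)=0$ for all $y$, i.e. $T(a^\#\bullet x)=0$ for all $a$, and by (i) this forces $\tau(z,x)=0$ for all $z$, so $x\in\mathrm{rad}(\tau)$. Conversely, if $x\in\mathrm{rad}(\tau)$ then $dN_a(x)=T(a^\#\bullet x)=0$ for \emph{every} $a$; since $\tfrac{d^k}{dt^k}\big|_{t=0}N(a+tx)=\tfrac{d^{k-1}}{dt^{k-1}}\big|_{t=0}dN_{a+tx}(x)=0$ for all $k\geq 1$, Taylor's formula for the polynomial $t\mapsto N(a+tx)$ evaluated at $t=1$ gives $N(a+x)=N(a)$, so $x\in\mathfrak n$. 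Hence $\mathfrak n=\mathrm{rad}(\tau)$, and combining this with $\mathrm{rad}(\tau)=\Rad(\mathbb J)$ concludes.

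I expect the only genuinely non-formal step to be the inclusion $\mathrm{rad}(\tau)\subseteq\Rad(\mathbb J)$, that is, the fact that semisimplicity of a finite-dimensional complex Jordan algebra is equivalent to non-degeneracy of its generic trace form (the Jordan-algebraic analogue of Cartan's criterion, classical in characteristic zero, see \cite{jacobson}). Since this, together with the identification $\mathfrak n=\mathrm{rad}(\tau)$, is exactly the content of \cite[0.15 and 9.10]{springer}, in the write-up one may simply cite that reference and keep the computation above only as motivation.
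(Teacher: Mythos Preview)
The paper does not supply its own proof of this proposition: it is stated with a direct citation to \cite[0.15 and 9.10]{springer} and then used as a black box. There is therefore no argument in the paper to compare against.

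Your sketch is a correct outline of one standard route to the result. The identification $\mathfrak n=\mathrm{rad}(\tau)$ via the differential formula $dN_a(x)=T(a^\#\bullet x)$ together with the density of invertibles is exactly how Springer organizes the computation, and your recognition that the substantive step is $\mathrm{rad}(\tau)=\Rad(\mathbb J)$ (the Jordan analogue of Cartan's criterion over a field of characteristic zero) is on point. Since the paper itself simply cites the reference, your final remark---that in the write-up one may just invoke \cite{springer}---matches precisely what the authors do.
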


We finish these reminders on Jordan algebras by stating some remarks on  the rank 3 case.
  Assume in what follows that $\mathbb J$ has rank 3 and for $x,y\in \mathbb J$, set
$$T(x,y)=T(xy)\qquad \mbox{ and } \qquad x\#y=(x+y)^\#-x^\#-y^\#.$$

By Proposition \ref{P:radical} (see also \cite{PeterssonRacine}), for a rank 3 Jordan algebra  one has
\begin{equation}
\label{E:RadicalRank3}
\Rad(\mathbb J)= \big\{     x\in \mathbb J \,|\, N(x)=T(x,\mathbb J) =T(x^\#,\mathbb J)= 0\big \}=\big\{     x\in \mathbb J \,|\,d^2\!N_x= 0\big \}.
\end{equation} 
Moreover, it can be verified that for $x\in \mathbb J$, the quadratic operator $U_x=2\,L_x^2-L_{x^2}$ is given by  
\begin{equation}
\label{E:UxRank3}
U_x(y)=T(x,y)x-x^\# \# y.
\end{equation}

It follows from  (\ref{E:RadicalRank3})  that  $T(r,\mathbb J)=0$ for every $r\in R$. Furthermore, one has $x\#y=d\#_x(y)=d\#_y(x)$ for every $x,y\in \mathbb J$. Since $I^{[2]}=U_I(\mathbb J)$ for any ideal $I$, the Penico series 
can also be defined inductively by 
$$
R^{[k+1]}=  d\#\big({R^{[k]}}\big) =\mathbb J  \# {R^{[k]}}=\big\langle
d\#_x({R^{[k]}}  ) \, \big|\,  x\in  \mathbb J \, 
\big\rangle\, .
%\, , \qquad k\geq 1.
$$

Finally, if $u\in \mathbb J$ is invertible, then the quadratic operator $U^{(u)}_x$ in the isotope $\mathbb J^{(u)}$ is given by $U_x^{(u)}=U_x U_u$ for every element $x$. Using this, one verifies easily that the Penico series  depends only on the isotopy class of $\mathbb J$.

\subsection{The general structure of quadro-quadric Cremona transformations}
\label{S:genSTRUCUTRE-Bir22}
A  consequence of the equivalence   between $\overline{{\bf Bir}_{2,2}}(\mathbb P^{n-1})/ \!\!\!{
{\tiny{\begin{tabular}{l}
$lin. $\vspace{-0.05cm}\\
$equiv.$
\end{tabular}
 }}}$ and $  \overline{{\boldsymbol {\mathcal J \!ordan}}}_{3}^{\, n}/{\! \! \!{\tiny{
 \begin{tabular}{l}
 $ isot.$
  \end{tabular}
  } }}$
 is a general structure theorem for quadro-quadric Cremona transformations,
obtained by translating  in the $C$-world the structure results for Jordan algebras presented above.
\medskip

The assertions below can be verified without difficulty and their proofs are left to the reader.

\subsubsection{\bf The radical}  
  \label{Ss:radical}
 Let $f$ be a quadro-quadric Cremona transformation of 
$ \mathbb P^{n-1}=\mathbb P(V)$ with baselocus scheme $\mathcal B_f\subset \mathbb P^{n-1}$.  
The secant scheme $\Sec(\mathcal B_f)$ of $\mathcal B_f$  is  the cubic hypersurface 
$V(N(x))\subset\p^{n-1}$, where $N(x)$ is the cubic form appearing in (\ref{E:bir22}).  This scheme
can be also considered as the {\it ramification scheme of $f$}, the name being justified by the fact that the locus
of points where the differential of the birational map $f$ is not of maximal rank is exactly  $V(N(x))_{\red}$, see also
\cite[\S 1.3]{CR}.
The {\it radical} of $f$ is  the set $R_f$ of points of multiplicity 3 of $\Sec(\mathcal B_f)$ 
and it has a natural scheme structure given
by  $R_f= V(d^2\!N_x)
\subset \mathbb P^{n-1} . $ 
The  support  of $R_f$, if not empty,   
is clearly a linear subspace of $\mathbb P^{n-1}$, contained in $\Sec(\mathcal B_f)$, and it is the {\it vertex of the cone} 
$\Sec(\mathcal B_f)=V(N(x))\subset\p^{n-1}$.  
We remark that $R_f$ can have  any dimension between $-1$ and $n-2$ (with the usual convention that the empty set is a subspace of dimension $-1$). 
The case  when $R_f$ is empty  corresponds to the semi-simple case and,  at the opposite side, 
$R_f$ is a hyperplane if and only if  $N(x)=L(x)^3$ with $L(x)$  linear form. 

\subsubsection{\bf The $JC$-correspondence in action}
\label{S:JCinAction}
Let $g$ be the quadratic inverse  of $f$. Let $F,G$ be some quadratic lifts of $f$, respectively $g$ and 
let  $R_F$ and $R_G$ be the affine cones over $R_f$ and $R_g$ respectively.  According to the $XJC$-equivalence (Theorem \ref{T:main}), there exist two linear maps  $L_1,L_2\in GL(V)$ such that $F=L_1^{-1}\circ \#_{\mathbb J}\circ L_2$  where 
$\#_{\mathbb J}$ denotes  the adjoint map of a rank 3 Jordan algebra $\mathbb J$. Then $G=L_2^{-1}\circ \#_{\mathbb J}\circ L_1$ and  $\Rad(\mathbb J)=L_2(R_F)=L_1(R_G)$. It is well known that   $(x+r)^\#-x^\#\in \Rad(\mathbb J)$  for every $x\in \mathbb J$ and every $r\in \Rad(\mathbb J)$, see \cite{springer} for instance.
In this setting, this gives us the following result.
\begin{lemma} If $x\in V$ and if $r\in R_F$,  then $F(x+r)-F(x)\in R_G$.
\end{lemma}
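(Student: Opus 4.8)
The statement to prove is the following Lemma:

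\begin{lemma*}
If $x\in V$ and if $r\in R_F$, then $F(x+r)-F(x)\in R_G$.
\end{lemma*}

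Here is my plan.

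The plan is to transport the well-known Jordan-algebraic fact $(x+r)^\#-x^\#\in\Rad(\mathbb J)$ across the linear equivalence $F=L_1^{-1}\circ\#_{\mathbb J}\circ L_2$ furnished by Theorem \ref{T:main}, exactly as the paragraph preceding the Lemma sets up. First I would record the two identifications already stated in \S\ref{S:JCinAction}: since $F=L_1^{-1}\circ\#_{\mathbb J}\circ L_2$ with $\#_{\mathbb J}$ the adjoint of a rank $3$ Jordan algebra $\mathbb J$, one has $G=L_2^{-1}\circ\#_{\mathbb J}\circ L_1$, and $\Rad(\mathbb J)=L_2(R_F)=L_1(R_G)$. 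The second of these is the key dictionary entry: a vector lies in $R_F$ (the affine cone over the vertex $R_f=V(d^2N_x)$ of the ramification cone of $f$) precisely when its image under $L_2$ lies in $\Rad(\mathbb J)$, and likewise $R_G$ corresponds to $\Rad(\mathbb J)$ under $L_1$.

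Next I would do the substitution. Put $a=L_2(x)$ and $\rho=L_2(r)$. Since $r\in R_F$, linearity gives $\rho\in\Rad(\mathbb J)$. Now
\[
F(x+r)-F(x)=L_1^{-1}\bigl(\#_{\mathbb J}(L_2(x+r))\bigr)-L_1^{-1}\bigl(\#_{\mathbb J}(L_2(x))\bigr)
 =L_1^{-1}\bigl((a+\rho)^\#-a^\#\bigr),
\]
using linearity of $L_1^{-1}$ and of $L_2$. By the classical identity for the adjoint of a rank $3$ Jordan algebra, $(a+\rho)^\#-a^\#\in\Rad(\mathbb J)$ for every $a\in\mathbb J$ and every $\rho\in\Rad(\mathbb J)$ (this is cited from \cite{springer}; concretely it follows because $a\#\rho=d\#_a(\rho)\in\mathbb J\#\Rad(\mathbb J)=R^{[2]}\subset\Rad(\mathbb J)$ and $\rho^\#\in R^{[2]}\subset\Rad(\mathbb J)$ as well, using $x\#y=d\#_x(y)$ and the characterization $R^{[k+1]}=\mathbb J\#R^{[k]}$ of the Penico series recalled at the end of \S\ref{S:Jordan-II}). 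Therefore $(a+\rho)^\#-a^\#$ lies in $\Rad(\mathbb J)$, hence its image under $L_1^{-1}$ lies in $L_1^{-1}(\Rad(\mathbb J))=R_G$, which is exactly the claim.

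There is essentially no obstacle here: the content is entirely in the $XJC$-correspondence (already proved) together with the standard stability of the Jordan radical under the adjoint, so the proof is a one-line transport of structure. The only point deserving a word of care is the justification that $\Rad(\mathbb J)=L_2(R_F)=L_1(R_G)$ — i.e. that the scheme-theoretic radical $R_f=V(d^2N_x)$ really matches $\Rad(\mathbb J)$ under the linear change of coordinates — but this is precisely the normalization \eqref{E:RadicalRank3}, which describes $\Rad(\mathbb J)$ as $\{x\mid d^2N_x=0\}$, and it is built into the definitions of \S\ref{Ss:radical}; so it is legitimate to invoke it as done in the text. Accordingly I would keep the write-up to the two displayed lines above plus the citation, matching the "left to the reader" tone of the surrounding subsection.
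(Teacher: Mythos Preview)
Your proposal is correct and follows exactly the same route as the paper: the paragraph immediately preceding the Lemma already records $F=L_1^{-1}\circ\#_{\mathbb J}\circ L_2$, $\Rad(\mathbb J)=L_2(R_F)=L_1(R_G)$, and the Jordan-algebraic fact $(x+r)^\#-x^\#\in\Rad(\mathbb J)$ (citing \cite{springer}), so that the Lemma is obtained by the very transport you wrote out. Your added justification of $(a+\rho)^\#-a^\#\in\Rad(\mathbb J)$ via $a\#\rho,\rho^\#\in R^{[2]}\subset R$ is a nice unpacking of what the paper leaves to the reference.
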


From the previous  Lemma, it follows that $F$ and $G$ pass to the quotient by $R_F$, respectively by $R_G$, inducing  quadratic affine morphisms
$\overline{F}: V/{R_F} \rightarrow V/{R_G}$, respectively $\overline{G}: V/{R_G} \rightarrow V/{R_F}$.  To understand what  these maps are, 
let $\mathbb J_{\ss}=\mathbb J/\Rad(\mathbb J)$ 
be the semi-simple part of $\mathbb J$, consider $L_1$ and $L_2$ as isomorphisms between 
$V$ and $\mathbb J$ inducing quotient maps  $\overline{L}_1: V/R_G\simeq \mathbb J_{\ss}$ 
and $\overline{L}_2: V/R_F\simeq \mathbb J_{\ss}$. 
The following diagram, in which all the  vertical arrows 
are the natural quotient maps, is commutative:
\begin{equation}
    \xymatrix@R=1.2cm@C=1.5cm{  
V   \ar@{->}[r]_{L_1}\ar@{->}[d] \ar@{->}@/^2pc/[rrr]^{F}&  \mathbb J \ar@{->}[d]   \ar@{->}[r]_{\#_{\mathbb J}} & \mathbb J \ar@{->}[r]_{L_2^{-1}}  \ar@{->}[d] & V   \ar@{->}[d] \\
{V}/{R_F} \ar@{->}[r]^{\overline{L}_1} 
\ar@{->}@/_2pc/[rrr]_{\overline{F}}
& \mathbb J_{\ss}    \ar@{->}[r]^{\#_{\mathbb J_{\ss}}} &\mathbb J_{\ss} \ar@{->}[r]^{\overline{L}_2^{-1}}  & {V}/{R_G}.}
 \end{equation}

\subsubsection{\bf The semi-simple part} 

Of course, $\overline{F}$ and $\overline{G}$  are  quadratic maps, 
each one being the inverse of the other in the sense used
till now. 
Indeed, if $N$ is the cubic form such that (\ref{E:bir22}) holds, 
 it passes to the quotient and induces a well-defined cubic form $\overline{N}$ on $V/R_F$ 
defined by 
${\overline{N}}({\overline{x}})=N(x)$   for  ${x}\in V$  (where  $\overline{x}$ stands 
for the class of $x$ modulo $R_F$). Moreover, 
$\overline{G}( \overline{F}(\overline{x})   )=\overline{N}(\overline{x})\, \overline{x}$
 for every ${x}\in V$ so that the pair $(\overline{F},[\overline{N}])$ is an element of 
 ${\overline{{\bf Bir}_{2,2}}}(\mathbb P(V/R_F))$ 
(the pair $(\overline{F},[\overline{N}])$ satisfies the statement of Lemma  \ref{L:crucial}). 
By definition, it  is  the  {\it semi-simple part} of $F$ and it is  denoted by $F_{\ss}$.  
In practice, one  identifies  $F_{\ss}$ with $\overline{F}$, which  is not a big deal 
since the cubic form $\overline{N}$ is always (essentially) determined.

\begin{ex}[{\bf{continuation of Example \ref{Ex:Ce/e3}}}]
\label{Ex:fss}
{\rm
The adjoint and the generic norm in $A=\mathbb C[\varepsilon]/(\varepsilon^3)$ are given by 
$(a,b,c)^\#=(a^2,-ab,b^2-ac)$ and $N(a,b,c)=a^3$    if $(a,b,c)$ stands for the coordinates of an element of $A$ relatively to the basis $(1,\varepsilon ,\varepsilon ^2)$.
The semi-simple part of $\#_A$ is the quadratic map  $a\mapsto a^2$,
which is a lift of the normed quadro-quadratic map $(a^2,[N])$ where $N$ is the cubic norm on 
$ {A}_{\ss}$ induced by the generic norm of $A$, i.e. $N(a)=N(a,0,0)=a^3$ 
for every $a\in {A}_{\ss}\simeq \mathbb C$.}
\end{ex}

We define the {\it semi-simple rank} $r_{\ss}(\mathbb J)$ of a Jordan algebra 
$\mathbb J$ as the rank of its semi-simple part 
$\mathbb J_{\ss}=\mathbb J/\Rad(\mathbb J)$ and the {\it semi--simple dimension} $\dim_{\ss}(\mathbb J)=\dim(\mathbb J_{\ss})$.
These notions are invariant up to isotopies
so that we can define the {\it semi-simple rank} $r_{\ss}(f)=r_{\ss}(F)$ 
of $f\in {{{\bf Bir}_{2,2}}}(\mathbb P(V))$, 
respectively the {\it semi--simple dimension} $\dim_{\ss}(f)$ 
(or of any affine lift $F\in {\rm Sym}^2(V^*)\otimes V$ of $f$), 
as the semi-simple rank of the associated isotopy class $[\mathbb J_f]$ 
of Jordan algebras, respectively as the semi-simple dimension of $[\mathbb J_f]$. 
In this way two new invariants (relatively to linear equivalence) 
of quadro-quadric Cremona transformations naturally appear. Let us see
how these definitions work in the simplest cases.
\medskip

\begin{ex}\label{ex:cremonap2}{\rm 
Modulo linear equivalence there are exactly three equivalence classes of quadro-quadric Cremona transformations
on $\p^2$, corresponding to the three isotopy classes of
cubic Jordan algebra of dimension three. Let us summarize the $JC$-correspondence and the semisimple
parts of these three classes in the following table:
\begin{table}[H]

%\hspace{-0.8cm}
  \centering
  \hspace{-0.4cm}
  \begin{tabular}{|c|c|c|c|c|c|} 
   \hline
   {\bf  Semi-simple }  
     &   {\bf Algebra}   & {\bf Cremona transformation}& {\bf Semi-simple}&{\bf Semi-simple} &{\bf Norm}  \\
%  \cline{3-4} 
   %{\bf Semi-simple dimension}
    {\bf  rank}  
    ${\boldsymbol{r_{\ss}}}$
     & ${\mathbb J_f}$  &   $f:\p^2\map\p^2$     &   {\bf part of $\mathbb J_f$}& {\bf part of $f$}& ${\boldsymbol{\overline{ N}}}$ \\
    \hline 
      1&$\frac{\mathbb C[\varepsilon]}{(\varepsilon^3)}$    &   $(x_1^2,-x_1x_2,x^2_2-x_1x_3)$     &$\mathbb C$&  $x_1^2$ & $x_1^3$  
     \\  \hline
  2&$\mathbb C\times \frac{\mathbb C[\varepsilon]}{(\varepsilon^2)}$ & $(x_2^2,x_1x_2,-x_1x_3)$ & $\mathbb C\times\mathbb C$& $(x_2^2,x_1x_2)$&$x_1x_2^2$
\\ \hline 
 3 &$\mathbb C\times\mathbb C\times\mathbb C$ & $(x_2x_3,x_1x_3,x_1x_2)$ & $\mathbb C\times\mathbb C\times\mathbb C$& $(x_2x_3,x_1x_3,x_1x_2)$&$x_1x_2x_3$\\
 \hline 
\end{tabular} \vspace{0.25cm}
\label{tablefp2}
\end{table}}
\end{ex}

\subsubsection{\bf Classification of the semi-simple part}
Since  a semisimple Jordan algebra is a direct product of simple ones 
and since  the classification of all simple Jordan algebras was obtained by Jacobson, see \cite{jacobson},  
the $JC$-correspondence provides  the complete classification 
of the semisimple parts of quadro-quadric Cremona transformations, yielding the following result.

\begin{prop}\label{P:semi} Let $F:V\to V$ be a  lift of a normed quadro-quadric 
Cremona transformation $f:\p(V)\map\p(V)$.  Then 
\begin{enumerate}
\item   $F$ is semi-simple ({\it ie.} $F=F_{\ss}$) if and only if $f$ is semispecial;
\item if $F$ is semi-simple, then $f$ is linearly equivalent to one of the 
quadro-quadric Cremona transformations listed in the third column 
of the table  below. 
\end{enumerate}
\end{prop}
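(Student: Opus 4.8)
The plan is to deduce both statements by transporting, through the $JC$--bijection of Theorem \ref{T:main}, facts that are already available on the Jordan side; no new geometry or algebra is needed beyond the classification of simple Jordan algebras.

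First I would settle (1). Recall from \S\ref{S:JCinAction} that if $\mathbb J_f$ denotes the rank $3$ Jordan algebra attached to $f$ and $L_1,L_2\in GL(V)$ are the linear maps with $F=L_1^{-1}\circ\#_{\mathbb J_f}\circ L_2$, then $\Rad(\mathbb J_f)=L_2(R_F)$, where $R_F\subset V$ is the affine cone over the radical scheme $R_f=V(d^2N_x)$ of \S\ref{Ss:radical}. By construction $F=F_{\ss}$ holds exactly when $R_F=0$, hence (by the displayed identity) exactly when $\Rad(\mathbb J_f)=0$, i.e.\ when $\mathbb J_f$ is semi-simple; and by the equivalence of (ii) and (iii) in Theorem \ref{T:simple} this is in turn equivalent to $f$ being semi-special. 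I would add the remark that once $F$ is semi-simple, $f$ is an honest quadro-quadric (and not a merely normed) Cremona transformation, because its model $\#_{\mathbb J_f}$ is the adjoint of a semi-simple rank $3$ algebra, which is an involution of bidegree $(2,2)$.

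For (2), assuming $F=F_{\ss}$ so that $\mathbb J_f$ is semi-simple of rank $3$, I would argue as follows. By part (1) of Theorem \ref{T:AlbertPenico}, $\mathbb J_f\cong\mathbb J_1\times\cdots\times\mathbb J_m$ with each $\mathbb J_k$ simple. Since the unital subalgebra generated by a generic element of a direct product is the product of the subalgebras generated in the factors, the rank is additive over direct products, so $\sum_k\rk(\mathbb J_k)=3$ with each term $\geq 1$; the rank vector is therefore $(3)$, $(1,2)$ or $(1,1,1)$. I would then quote Jacobson's classification \cite{jacobson}: the only simple complex Jordan algebra of rank $1$ is $\mathbb C$; a simple one of rank $2$ is $\mathbb C\oplus W$ with $W$ of dimension $\geq 2$ and non-degenerate quadratic form; a simple one of rank $3$ is isomorphic to exactly one of the four algebras ${\rm Herm}_3(\mathbb A)$, $\mathbb A\in\{\mathbb R_{\mathbb C},\mathbb C_{\mathbb C},\mathbb H_{\mathbb C},\mathbb O_{\mathbb C}\}$ (that is, ${\rm Sym}_3(\mathbb C)$, $M_3(\mathbb C)$, ${\rm Alt}_6(\mathbb C)$ and the Albert algebra). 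Applying the map $[\mathbb J]\mapsto[\#_{\mathbb J}]$ of Theorem \ref{P:jordanXf} to each possibility and using Proposition \ref{P:product} to identify the two product cases $\mathbb C\times(\mathbb C\oplus W)$ and $\mathbb C\times\mathbb C\times\mathbb C$ with, respectively, an elementary quadratic transformation and the standard Cremona involution of $\mathbb P^2$ (compare the table in Example \ref{ex:cremonap2}) produces exactly the list in the table, the accompanying cubic norms $\overline N$ being the generic norms read off from \eqref{E:CubicNorm} in the rank $3$ cases and from $N(\lambda,w)=\lambda^2+q(w)$ in the rank $2$ case. Along the way I would keep track of the boundary cases: non-degeneracy of $q$ is forced by semi-specialness, the value $\dim W=1$ is not a new case (it collapses $\mathbb C\times(\mathbb C\oplus W)$ to $\mathbb C^3$), and there is no semi-simple rank $3$ Jordan algebra in dimension $<3$ apart from the trivial one-dimensional case.

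I do not anticipate a genuine difficulty. The single ingredient external to this paper is Jacobson's classification of the simple complex Jordan algebras, which I would cite as a black box; the rest is bookkeeping inside the $XJC$-diagram. The most tedious --- though entirely routine --- point is writing the adjoint maps $[x]\mapsto[x^\#]$ explicitly in coordinates for the four algebras ${\rm Herm}_3(\mathbb A)$ in order to fill in the corresponding entries of the table, which uses only the identity $(x^\#)^\#=N(x)x$ together with the norm formula \eqref{E:CubicNorm}.
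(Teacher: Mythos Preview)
Your approach is exactly the one the paper intends: the sentence immediately before the proposition says that the result follows from the $JC$--correspondence together with Jacobson's classification of simple Jordan algebras, and that is precisely how you argue. For part (1) in the genuine bidegree $(2,2)$ case your chain $F=F_{\ss}\Leftrightarrow R_F=0\Leftrightarrow\Rad(\mathbb J_f)=0\Leftrightarrow\mathbb J_f$ semi-simple $\Leftrightarrow f$ semi-special (via Theorem~\ref{T:simple}) is fine, and for part (2) your decomposition into rank vectors $(3)$, $(1,2)$, $(1,1,1)$ and your appeal to Jacobson correctly produces the last five rows of the table.

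There is however a genuine gap concerning the \emph{normed} setting. The proposition is stated for $f\in\overline{{\bf Bir}_{2,2}}(\mathbb P(V))$, and the table explicitly contains the rows with $r_{\ss}=1$ and $r_{\ss}=2$, which correspond to \emph{fake} quadro-quadric maps (Jordan algebras of rank $<3$ equipped with a cubic norm, cf.\ \S\ref{S:generalizedXJC}). Your ``remark'' that once $F$ is semi-simple then $f$ must be an honest bidegree $(2,2)$ map is not correct: for instance the map $\overline F(\rho,\lambda)=(\lambda^2,\rho\lambda)$ with $\overline N=\rho\lambda^2$ in the third row has $R_{\overline F}=0$ (so $\overline F=\overline F_{\ss}$) while the projectivization is a projective linear involution of $\mathbb P^1$. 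The circularity is that you invoke $\mathbb J_f$ as a rank $3$ algebra, but the construction of Theorem~\ref{P:jordanXf} only applies to genuine $f\in{\bf Bir}_{2,2}(\mathbb P^{n-1})$; for normed $f$ one must use the extended correspondence of \S\ref{S:generalizedXJC}, where the associated algebra may have rank $1$ or $2$. Consequently your classification in (2) stops at rank $3$ and misses the first three rows of the table, which arise precisely as semi-simple quotients $\mathbb J_{\ss}$ of rank $<3$ of rank $3$ algebras $\mathbb J$. To complete the argument you should run the same ``product of simples'' analysis for semi-simple $\mathbb J_{\ss}$ of rank $\le 3$ equipped with the induced cubic norm $\overline N$, and check case by case which pairs $(\mathbb J_{\ss},\overline N)$ actually occur; this is short but not entirely vacuous.
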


 \begin{table}[H]
 \label{tableFss}
%\hspace{-0.8cm}
  \centering
  \hspace{-0.4cm}
  \begin{tabular}{|c|c|c|c|c|} 
   \hline
    {\bf  Semi-simple}   &     {\bf Semi-simple} &{\bf Ambient}   & \multicolumn{2}{|c|}{\qquad \qquad \qquad  {\bf Semi-simple part}      }  \\
%  \cline{3-4} 
  {\bf rank} ${\boldsymbol{r_{ss}}}$ &  {\bf dim.} ${\boldsymbol{\dim_{\ss}}}$&  {\bf space} ${\boldsymbol{\overline{V}}}$  &   ${\boldsymbol{\overline{F}}}$     &    ${\boldsymbol{\overline{N}}}$       \\
    \hline 
    $1$ &  1& $\mathbb C$    &   $\lambda \longmapsto \lambda^2$     &    $\lambda^3$  
     \\  \hline
 $2$  & $1+\dim(W)$& $\mathbb C\oplus W$ & $(\lambda,w)\longmapsto (\lambda^2, -\lambda w)$ & $\lambda(\lambda^2+q(w))$
\\ \hline 
 $2$  & 2& $\mathbb C\times\mathbb C$ & $(\rho,\lambda)\longmapsto (\lambda^2, \rho\lambda)$ & $\rho\lambda^2$
\\ \hline 
$3$  & $2+\dim(W)$& $\mathbb C\times (\mathbb C\oplus W)$ & $(\rho,\lambda,w)\longmapsto \big(  \lambda^2+q(w),       \rho \lambda, -\rho w\big) $ & $\rho(\lambda^2+q(w))$
\\ \hline 
$3$  & 6& ${\rm Sym}_3(\mathbb C)$ & $M \longmapsto {\rm Adj}(M)$   & $\det(M)$
\\ \hline 
$3$  & 9&${\rm M}_3(\mathbb C)$ & $M \longmapsto {\rm Adj}(M)$   & $\det(M)$
\\ \hline 
$3$  & 15&${\rm Alt}_6(\mathbb C)$ & $M \longmapsto M^{\#}$\quad \quad  
   & ${\rm Pf}(M)$
\\ \hline 
$3$  & 27&${\rm Herm}_3(\mathbb O\otimes \mathbb C)$ & $M \longmapsto M^\# $ \quad \quad    &  {\it cf.} (\ref{E:CubicNorm})
\\ 
\hline 
\end{tabular} \vspace{0.25cm}

\label{tableFss2}
 \caption{Explicit classification  of semi-simple parts of Cremona transformations of bidegree $(2,2)$. In this table,  $q$ stands for a nondegenerate quadratic form on a non-trivial vector space $W$, ${\rm Adj}(M)$ is the usual adjoint matrix while $M^\#$ is the adjoint
 in the corresponding algebra; $\det(M)$  is the usual determinant while 
 ${\rm Pf}(M)$ is the Pfaffian of an antisymmetric matrix;  
 the last line  is expressed using the general  formalism of the theory of Jordan algebras (see section \ref{S:Jworld} and also Table \ref{tableJordan}). }
\end{table}
  
 As  an application of the previous Proposition, 
we deduce two  classification results.
Let us recall that a homogeneous polynomial $P\in \mathbb C[x_1,\ldots, x_n]$ 
is called {\it homaloidal} if the associated polar map
$$%\phi_P=
P'=\bigg[\frac{\partial P}{\partial x_1}:\cdots:\frac{\partial P}{\partial x_n}\bigg]:\p^{n-1}\map\p^{n-1}$$ is birational. 
Let notation be as in \S \ref{Ss:radical}, and set 
 $\mathcal B_P=\mathcal B_{P'}=V(\frac{\partial P}{\partial x_1},\ldots,\frac{\partial P}{\partial x_n})\subset\p^{n-1}$. 
Assuming that $P'$ has bidegree $(2,2)$, we know that there exists a cubic form $N$ 
such that  $V(N)\subset\p^{n-1}$ is  the secant scheme of $\mathcal B_P$. 
Let us remark that since   $P'$ is birational, the partial derivates of $P$ are
 linearly independent so that $V(P)\subset\p^{n-1}$ is not a cone. In particular if $P$
has degree three, then it is necessarily 
  a reduced polynomial. We first classify reducible homaloidal
polynomials of degree three defining quadro-quadric Cremona transformations.
%\medskip

\begin{coro}\label{C:redhomaloidal} 
 Let  $P$ be a cubic homaloidal polynomial  in $n\geq 3$ variables 
such that $P'$ is a quadro-quadric Cremona transformation. If $P$ is reducible, then
one of the following holds:
\begin{itemize}
\item   $V(P)={Sec}(\mathcal B_P)$ and 
  $P$ is linearly equivalent to the norm of the   semi--simple (but not simple) rank 3   complex Jordan algebra of  the fourth line in Table 1 above;

\smallskip   
\item  $V(P)\neq {Sec}(\mathcal B_P)$ and   $V(P)$  is the union of a  smooth hyperquadric in $\mathbb P^{n-1}$ with   a tangent hyperplane; in some coordinates, one has 
$P(x)=x_1(x_2^2+\cdots+x_{n-1}^2-x_1x_n)$ and  $N(x)=x_1^3$. 
\end{itemize}
\end{coro}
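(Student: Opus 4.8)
The plan is to use the $JC$-correspondence (Theorem \ref{T:main}) together with the explicit classification of semi-simple parts in Proposition \ref{P:semi} and Table 1. Since $P'$ is a quadro-quadric Cremona transformation, it is linearly equivalent to the adjoint $\#_{\mathbb J}$ of a rank 3 Jordan algebra $\mathbb J$ of dimension $n$, and the cubic $P$ itself (being essentially determined up to scalar by the compatibility relation of Lemma \ref{L:crucial}) corresponds to the generic norm $N_{\mathbb J}$ of $\mathbb J$, up to linear equivalence. The key input is that $P$ is reducible. I would first observe, using the description of the radical via the generic norm (Proposition \ref{P:radical} and formula (\ref{E:RadicalRank3})), that the secant cubic $\Sec(\mathcal B_P)=V(N)$ is the cone over the semi-simple norm $\overline{N}=N_{\mathbb J_{\ss}}$ with vertex $R_f=\p(\Rad(\mathbb J))$. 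Reducibility of $P$ forces a constraint on $\mathbb J_{\ss}$, and the analysis splits into the two cases of whether $V(P)$ equals $\Sec(\mathcal B_P)$ or not.

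\textbf{Case $V(P)=\Sec(\mathcal B_P)$.} Here $P$ is, up to linear equivalence, the generic norm $N_{\mathbb J}$, which is the pullback of $\overline{N}=N_{\mathbb J_{\ss}}$. Since $N_{\mathbb J}$ is reducible, so is $\overline{N}$; but inspecting the last column of Table 1, the only semi-simple rank 3 Jordan algebras with reducible generic norm are the direct products $\mathbb C\times\mathbb J'$ with $\mathbb J'$ of rank $2$ (norms $\lambda(\lambda^2+q(w))$ and $\rho\lambda^2$, which are reducible) — here one must be slightly careful, as $\rho\lambda^2$ arises when $\dim W=0$, a degenerate case; the genuinely reducible-but-with-irreducible-quadratic-factor case is $\rho(\lambda^2+q(w))$ with $q$ nondegenerate, i.e. line 4 of Table 1, which is semi-simple but not simple. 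One then needs to check that $P$ being a genuine cubic homaloidal polynomial in $n\geq 3$ variables (so that $V(P)$ is not a cone, hence $\Rad(\mathbb J)=0$, i.e. $\mathbb J$ is itself semi-simple) rules out the other product cases and pins down exactly line 4; this gives the first bullet.

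\textbf{Case $V(P)\neq\Sec(\mathcal B_P)$.} Now $P$ is a reduced reducible cubic whose zero locus is a proper variety (not a cone) but does not coincide with the secant cubic $V(N)$. Since $P$ factors as a product of linear and quadratic forms and $V(P)$ is not a cone, one is forced into $P=\ell\cdot q'$ with $\ell$ a linear form and $q'$ an irreducible quadratic form (a product of three independent linear forms would make $V(P)$ reducible but still would not be excluded a priori — one should note that three independent linear forms gives $N=x_1x_2x_3$, which is line 3's type and does have $V(P)=\Sec(\mathcal B_P)$, so that subcase belongs to the previous bullet). The remaining possibility is that the quadratic factor $q'$ is degenerate or that $\ell$ divides $q'$ after a coordinate change; working out the adjoint of the algebra $\mathbb C[\varepsilon]/(\varepsilon^3)$ (Example \ref{Ex:fss}) shows its generic norm is $x_1^3$, which is a cube — not this case — so I would instead identify the algebra $\mathbb J$ here with the rank 3 Jordan algebra $\mathbb C\times\mathbb J_1'$ or, more to the point, directly analyze which rank 3 algebra has a \emph{homaloidal cubic norm} of the stated degenerate form; concretely, one verifies that $P(x)=x_1(x_2^2+\cdots+x_{n-1}^2-x_1x_n)$ is homaloidal, its polar map has bidegree $(2,2)$, and the associated secant cubic is $V(x_1^3)$, whose vanishing locus (a triple hyperplane) is strictly different from $V(P)$ (a smooth quadric union a tangent hyperplane). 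The geometric identification of $V(P)$ as "smooth hyperquadric plus tangent hyperplane" then follows by a short direct computation: $x_2^2+\cdots+x_{n-1}^2-x_1x_n=0$ is a smooth quadric, and $x_1=0$ cuts it in a double conic, i.e. is tangent.

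\textbf{Main obstacle.} The delicate point is the bookkeeping in Case $V(P)=\Sec(\mathcal B_P)$: one must carefully exclude every entry of Table 1 except line 4 using both reducibility of the norm \emph{and} the hypotheses $n\geq 3$ and "$V(P)$ not a cone" (equivalently $\Rad(\mathbb J)=0$, forcing $\mathbb J$ semi-simple), while in Case $V(P)\neq\Sec(\mathcal B_P)$ the hard part is showing that the \emph{only} reducible possibility is the explicit one listed — this requires using that the radical $R_f=V(d^2N_x)$ is nonempty and a hyperplane forces $N=L^3$, and then working out which null extension of which semi-simple part produces a reducible (but not cubed-linear-form) homaloidal $P$; equivalently, one can invoke the structure theory of Jordan algebras with cubic norm from \S\ref{S:generalizedXJC}, where $\widetilde{\mathbb J}_1^n$ with norm $\eta_{1,0}=\ell^2\ell'$ appears, matching $P=x_1^2 x_n$ up to coordinates — but one must reconcile the apparent mismatch between $\ell^2\ell'$ and the claimed $P=x_1(x_2^2+\cdots-x_1x_n)$, which is resolved by noting these are the \emph{norm} versus the \emph{homaloidal polynomial}, related but not equal. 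Getting this reconciliation exactly right, and confirming $P'$ genuinely has bidegree $(2,2)$ in this degenerate instance, is where I expect the real work to lie.
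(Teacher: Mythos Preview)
Your Case 1 argument is essentially the paper's: once $V(P)=V(N)$, the fact that $V(P)$ is not a cone (since the partials of $P$ are linearly independent) forces $\Rad(\mathbb J)=0$ by Proposition \ref{P:radical}, so $\mathbb J$ is semi-simple; reducibility of $N$ then excludes the simple cases in Table 1, leaving line 4.

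Case 2 has a real gap, and the Jordan-algebraic route you sketch will not close it. Your opening claim that ``$P$ corresponds to the generic norm $N_{\mathbb J}$'' is already off: Lemma \ref{L:crucial} pins down $N$, not $P$, and the whole point of the second bullet is that $P\neq N$. Your proposed fix via $\widetilde{\mathbb J}_1^n$ fails because that is a \emph{rank 2} algebra whose adjoint is a fake quadro-quadric map (cf.\ \S\ref{S:generalizedXJC}), contrary to hypothesis. More fundamentally, in Case 2 one has $N=x_1^3$, so $\mathbb J$ has semi-simple rank 1 and is far from semi-simple; classifying all such $\mathbb J$ arising from reducible homaloidal cubics purely algebraically is no easier than the original problem.

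The paper instead argues geometrically. Write $P=x_1\cdot q$. From the product rule one gets the scheme inclusion $V(x_1)\cap V(q)\subset\mathcal B_P$, so $V(x_1)$ lies in $\Sec(\mathcal B_P)$ and is contracted by $P'$; hence $x_1\mid N$. Now split on whether $Q=V(q)$ is also contracted. If it is, then $q\mid N$ too, so $N=x_1 q=P$ up to scalar and we are back in Case 1 (and $Q$ is then a cone with vertex a point, matching line 4). If $Q$ is not contracted, then $Q$ is a smooth quadric, and $V(x_1)$ must be \emph{tangent} to it: otherwise $\mathcal B_P$ would be a smooth degenerate quadric $Q^{n-3}\subset\mathbb P^{n-2}$, which cannot be the base locus of a genuine bidegree $(2,2)$ Cremona map. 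A coordinate change then yields the stated normal form with $N=x_1^3$.

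One further small point: the three-linear-factors subcase deserves its own line. The paper observes that if $P=\ell_1\ell_2\ell_3$ then the partials span at most a 3-dimensional space, so birationality forces $n=3$; this then falls under Case 1 (line 4 with $\dim W=1$).
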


\begin{proof}  If $P$ is the product of three  distinct linear forms, 
then necessarily $n=3$ and we are
in the first case, see also the third case in Example \ref{ex:cremonap2}. 

Suppose that $P$  is the product of a linear form $\ell$ with a quadratic form $q$. Without loss of generality we
can assume $\ell=x_1$. Let $Q=V(q)\subset\p^{n-1}$. There is an inclusion
of schemes $V(x_1)\cap Q\subset\mathcal B_P$ from which it follows that $V(x_1)$
  is contained in the secant locus scheme of  $\mathcal B_P$
so that it is contracted  by $P'$
and  $x_1$ is an irreducible factor of $N(x)$.
 
If the hyperquadric $Q$ is also contracted by $P'$, then  it is necessarily a cone 
with vertex a point
and we can suppose, modulo constants, 
$N(x)=x_1  q(x)=P(x)$. Since $V(N)=V(P)\subset\p^{n-1}$ is not a cone, the rank 3 Jordan algebra  $\mathbb J_{P'}$ has trivial radical by Proposition \ref{P:radical} hence is semi-simple.  Since $N=P$ is reducible, $\mathbb J_{P'}$ is not simple hence 
 we are in   the case corresponding to the fourth line  of
Table 2.                   

Finaly, if $Q$ is not contracted by $P'$, then it is a smooth hyperquadric. In this case the hyperplane $V(x_1)\subset\p^{n-1}$
is necessarily tangent to $Q$ at a point  (otherwise
$\mathcal B_P$ would be a degenerated smooth quadric $Q^{n-3}\subset\p^{n-2}$,
which is impossible) and we are in the second case.
 In the isotopy class $[\mathbb J_{P'}]$ we can choose
a representative such that $x^{\#}=(x_1^2,-x_1x_2,\ldots,-x_1x_{n-1},x^2_2+\cdots+x^2_{n-1}-x_1x_n)$ and $N(x)=x_1^3$.
\end{proof}

Following \cite{Dolgachev},  we will say that a homogeneous polynomial  $P\in\mathbb C[x_1,\ldots,x_n]$  such that $\det (\Hess(\ln P))\neq 0$ is  {\it EKP--homaloidal} if its multiplicative Legendre transform $P_*$ is again polynomial. In this case $P_*$ is a homogeneous polynomial function too and $\deg(P) =\deg(P_*)$, see also \cite{EKP} where this condition was investigated and studied. By the preliminary results of \cite{EKP}, a $EKP$--homaloidal polynomial is homaloidal and, after having identified  $\mathbb C^n$ with its bidual, we have 
\begin{equation}\label{keyhom}
\frac{P_*'}{P_*}\circ\frac{P'}{P}=I\!d_{\mathbb C^n}. 
\end{equation}
 
Therefore such a $EKP$--homaloidal polynomial  of degree $d$ 
defines a Cremona transformation of type $(d,d)$. If moreover $d=3$, 
it  follows from \eqref{keyhom} (combined with (\ref{E:bir22-2})) that we have $V(N)=V(P)$, that is $V(P)$ is the ramification locus scheme of $P'$. On the contrary as we shall see in  the proof of Corollary \ref{C:chaputsab}
below, if $P$ is a homaloidal cubic polynomial such that $V(N)=V(P)$, then it is  $EKP$--homaloidal.
Note that the $EKP$ condition defined above is not satisfied by the reducible polynomials of the type
described in the first case of Corollary \ref{C:redhomaloidal} where $V(N)$ is a cubic hypersurface supported on the tangent hyperplane.

\begin{coro}\label{C:chaputsab}
 Let  $P$ be a  homogeneous polynomial  in $n\geq 3$ variables. The following assertions are equivalent: 
 \begin{itemize}
 \item[$(1)$] $P$ is a  cubic $EKP$--homaloidal polynomial;\smallskip 
  \item[$(2)$]  $P$ is homaloidal, $P'$ has  bidegree $(2,2)$ and $V(P)=V(N)$;\smallskip 
 \item[$(3)$]  $P$ is  the norm of a semi--simple rank 3 complex Jordan algebra. \smallskip
\end{itemize}
When these assertions are verified, $P$ is linearly equivalent to one of the 
norms  in the last five lines of Table \ref{tableFss}.
\end{coro}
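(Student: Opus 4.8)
The plan is to prove the equivalences along the cycle $(1)\Rightarrow(2)\Rightarrow(3)\Rightarrow(1)$, the decisive input for the middle step being the $XJC$-dictionary, and then to read off the explicit list from the classification of semi-simple rank $3$ Jordan algebras. The implication $(1)\Rightarrow(2)$ is essentially contained in the discussion preceding the statement: an $EKP$-homaloidal cubic $P$ is homaloidal by the preliminary results of \cite{EKP}, its polar map $P'$ has degree $2$ (its components are the quadratic partials, which have no common factor since $P'$ is birational) with inverse the polar map $P_*'$ of the cubic $P_*$, so $\mathrm{bideg}(P')=(2,2)$, and \eqref{keyhom} combined with \eqref{E:bir22-2} forces $V(P)=V(N)$.

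For $(2)\Rightarrow(3)$ I would set $f=P'\in{\bf Bir}_{2,2}(\p^{n-1})$. Since $P'$ is dominant its partials are linearly independent, so $V(P)$ — hence $V(N)=V(P)$ — is not a cone; in particular $P$ is a reduced cubic and $P=c\,N$ for some $c\in\C^*$. By the description of the radical in Section~\ref{Ss:radical}, the support of the scheme $R_f=V(d^2N_x)$ is, when non-empty, the vertex of the cone $\Sec(\mathcal B_f)=V(N)$; as this cubic is not a cone, $R_f=\emptyset$. Attaching to $f$ a rank $3$ Jordan algebra $\mathbb J_f$ via Theorem~\ref{P:jordanXf}, the discussion of Section~\ref{S:JCinAction} identifies $\Rad(\mathbb J_f)$ with the affine cone over $R_f$, so \eqref{E:RadicalRank3} gives $\Rad(\mathbb J_f)=0$, i.e. $\mathbb J_f$ is semi-simple; and the proof of Theorem~\ref{P:jordanXf} shows $N$ to be linearly equivalent to the generic norm $N_{\mathbb J_f}$. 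Since $P=c\,N$, transporting the algebra structure accordingly and passing to a suitable isotope — which multiplies the norm by an arbitrary non-zero scalar and preserves semi-simplicity — exhibits $P$ as the generic norm of a semi-simple rank $3$ Jordan algebra.

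For $(3)\Rightarrow(1)$ I would take $P=N_{\mathbb J}$ with $\mathbb J$ semi-simple of rank $3$ and let $T(\cdot,\cdot)$ be the generic trace form, which is non-degenerate (semi-simplicity) and thus identifies $\mathbb J$ with $\mathbb J^*$ through $\theta\colon a\mapsto T(a,\cdot)$. From the standard identity $dN_x=T(x^\#,\cdot)$ one gets, on the invertible locus, $\nabla(\ln N)_x=\theta(x^{-1})$; hence the gradient map of $\ln N$ is the algebra inversion of $\mathbb J$ read through $\theta$, and in projective terms $\nabla(\ln N)=\theta\circ\#_{\mathbb J}$, which is birational because $\#_{\mathbb J}$ is (recall $(x^\#)^\#=N(x)\,x$). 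Since inversion is an involution, the inverse gradient map is again of this form and coincides with the gradient of $\ln(N\circ\theta^{-1})$; therefore the multiplicative Legendre transform $P_*$ equals $N\circ\theta^{-1}$ up to normalization and is a cubic polynomial. Differentiating once more (using $d(\text{inversion})_x=-U_x^{-1}$, which is exactly the identity $P_f=-d(j_f)^{-1}_{\,\cdot}$ of Section~\ref{S:CtoJ}) yields $\Hess(\ln N)_x(y,z)=-T(U_x^{-1}(z),y)$, non-degenerate for invertible $x$, so $\det\Hess(\ln P)\not\equiv 0$; thus $P$ is a cubic $EKP$-homaloidal polynomial, which is $(1)$.

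Once the three conditions are seen to be equivalent, $P$ is, up to linear equivalence, the generic norm of a semi-simple rank $3$ Jordan algebra; by Jacobson's classification of simple Jordan algebras \cite{jacobson} such an algebra is a direct product of simple ones of total rank $3$, namely $\C\times(\C\oplus W)$ with $W$ carrying a non-degenerate quadratic form, $\mathrm{Sym}_3(\C)$, $M_3(\C)$, $\mathrm{Alt}_6(\C)$ or $\mathrm{Herm}_3(\mathbb O\otimes\C)$, whose generic norms are exactly the last five entries of the $\overline N$-column of Table~\ref{tableFss} (compare Proposition~\ref{P:semi} and Table~\ref{tableJordan}). The main obstacle will be the step $(3)\Rightarrow(1)$ and, inside it, the identification of the multiplicative Legendre transform of a Jordan norm with the norm itself transported through the trace form; everything else amounts to a bookkeeping use of the $XJC$-dictionary already at our disposal, in particular Theorem~\ref{P:jordanXf} and the reading of $\Rad(\mathbb J)$ as the vertex of the secant cone $\Sec(\mathcal B_f)$.
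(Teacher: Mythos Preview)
Your argument is correct, and your treatment of $(1)\Rightarrow(2)$ and $(2)\Rightarrow(3)$ matches the paper's: the latter also invokes the $JC$-correspondence to put $P'$ in adjoint form and then observes that $V(N)=V(P)$ not being a cone forces $\Rad(\mathbb J_P)=0$ via Proposition~\ref{P:radical}. The difference lies in $(3)\Rightarrow(1)$. The paper does not argue this implication directly; it simply appeals to the classification in Table~2: once $\mathbb J_P$ is semi-simple, $P$ is linearly equivalent to one of the five listed norms, and these are the classical examples known to be $EKP$-homaloidal. Your route is genuinely different and more conceptual: you use the non-degeneracy of the trace form on a semi-simple algebra to identify $\nabla(\ln N)$ with the Jordan inversion transported through $\theta$, deduce that the multiplicative Legendre transform is the cubic $N\circ\theta^{-1}$, and check non-degeneracy of $\Hess(\ln N)$ via $d(j)_x=-U_x^{-1}$. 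This buys a uniform, classification-free proof of $(3)\Rightarrow(1)$ that would survive unchanged in any characteristic or over any base where the trace form is non-degenerate; the paper's approach, by contrast, is shorter on the page but logically rests on the finite list of semi-simple rank~$3$ algebras and on the (implicit) verification that each of the five norms is $EKP$-homaloidal.
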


\begin{proof} We have seen before that $(1)$ implies $(2)$.   Assume that the latter is satisfied by $P$.   Since $P'$ has bidegree $(2,2)$, the $JC$-correspondence ensures  that (modulo composition by  linear automorphisms), one can assume that $P'$ is nothing but the adjoint map of a  rank three  complex Jordan algebra noted by $\mathbb J_P$.   Since $P$ is homaloidal, 
 $V(N)=V(P)\subset\p^{n-1}$ is not a cone.  By Proposition \ref{P:radical}, this implies  that the radical of $\mathbb J_P$ is trivial. Thus 
 $\mathbb J_P$  is semi--simple and the conclusion follows from the classification  recalled in Table 2.
\end{proof}

\begin{rem}\label{R:Verrahom}
{\rm For $n=3$ the two examples described in Corollary \ref{C:redhomaloidal},
 modulo linear equivalence, are the unique homaloidal polynomials by a result 
of Dolgachev without  any assumption on $\deg(P)$ and/or on  
$P'$, see \cite[Theorem 4]{Dolgachev}. For $n=4$ there exists
irreducible homaloidal polynomials of degree 3 whose associated Cremona transformation is of type $(2,3)$. One such example
is given by the equation of a special projection of the cubic scroll in $\p^4$ from a point lying in a plane generated by the directrix line and one
of the lines of the ruling, see \cite{CRS} for details and generalizations of this construction.
For $n\geq 4$ there exist irreducible homaloidal polynomials
of any degree $d\geq 2n-5$, see \cite{CRS}.

More related to the above results is a very interesting series of irreducible  cubic homaloidal polynomials communicated to us by A. Verra.
The associated polar map is an involution and hence of type $(2,2)$ but the ramification locus of these maps is different
from the associated cubic hypersurface. 
The construction of these polynomials is described in \cite{BV} but the details about the geometry
 of their
polar maps will be probably treated elsewhere. 
}
\end{rem}

\subsubsection{\bf The general structure of quadro-quadric Cremona transformations}   \label{S:GenStruQqCremona}
  In this section, we  translate Theorem \ref{T:AlbertPenico}  into the $C$-world as explicitely as possible. We continue to use the notation introduced in  \ref{S:JCinAction}. 
  \smallskip
  
   It will be useful  to denote by $V_F$  (respectively  $V_G$) the space $V$ considered as the source space of the map $F$ (respectively $G$).   If  $A_f$ is  a subset of $\mathbb P(V_F)$, we will denote by $A_F$ the affine cone over $A_f$ in $V_F$
   and we shall use the analogue notation for subsets in $\mathbb P(V_G)$ and in $V_G$.  One denotes  by $\overline{V}_{\!\! F}$,
   respectively $\overline{V}_{\!\!G}$, the quotient space $V_F/R_F$, respectively $V_G/R_G$,
   and by $\pi_f: \mathbb P(V_F)\dashrightarrow \mathbb P(\overline{V}_{\!\! F})$, respectively  $\pi_g: \mathbb P(V_G)\dashrightarrow \mathbb P(\overline{V}_{\!\! G})$, the rational map induced by the canonical linear projection. 
\smallskip

The interpretation of part  (1) in  Theorem \ref{T:AlbertPenico}  has  been explained in Section \ref{S:JCinAction}, see also part (1) in Theorem \ref{T:RadCrem} below for a precise statement.  In order to reinterpret part (2) and (3) of Theorem \ref{T:AlbertPenico} it is necessary  to  introduce some notions and to recall some definitions. 
\smallskip 

Let ${\rm Str}(f)$ be the {\it structure group} of $f$  (or rather of $F$) defined as  in 
\cite[\S 1.1]{springer}:  by definition, ${\rm Str}(f)$  is the 
set of linear automorphisms $\theta\in GL(V_F)$ such that 
$ F\circ \theta=\theta^\# \circ F  $ 
 for a certain  $\theta^\#\in GL(V_F)$. Of course, 
 ${\rm Str}(f)$ depends only on $f$ and 
 one verifies that it is  an algebraic subgroup of $GL(V_F)$. Moreover,  since $f$  is  invertible,  $\theta^\#$ is uniquely determined by $\theta$ and one verifies easily that the map $\theta\mapsto \theta^\#$ is an  isomorphism of algebraic groups from ${\rm Str}(f)$  onto  ${\rm Str}(g)$. \smallskip 
 
 \begin{ex}[{\bf{continuation of Example \ref{Ex:Ce/e3}}}]
 {\rm The structure group of  $F:(a,b,c)\rightarrow (a^2,-ab,b^2-ac)$ is  the subgroup of 
 invertible  triangular inferior complex  matrices $(m_{ij})_{i,j=1}^3$, 
 whose diagonal entries satisfy $m_{11}m_{33}=(m_{22})^2$.
 Since  $F\circ F(a,b,c)=a^3(a,b,c)$, the map $\theta\mapsto \theta^\#$ is an automorphism of ${\rm Str}(F)$.  It is given by 
$$
\begin{pmatrix}
\frac{{m_{22}}^2}{m_{33}}& 0 & 0\\
m_{21} & m_{22} & 0 \\
m_{31} &  m_{32}    &    m_{33} \\
 \end{pmatrix}\longmapsto 
\begin {pmatrix} {\frac {{m_{{22}}}^{4}}{{m_{{33}}}^{2}}}&0&0\\ 
\noalign{\medskip}-{\frac {{m_{{22}}}^{2}m_{{21}}}{m_{{33}}}}&{\frac {{m_{{22}}}^{3}}{m_{{33}}}}&0\\
 \noalign{\medskip}{\frac {-{m_{{22}}}^{2}m_{{31}}+{m_{{21}}}^{2}m_{{33}}}{m_{{33}}}}&-{\frac {m_{{22}} \left( -m_{{22}}m_{{32}}+2\,m_{{21}}m_{{33}} \right) }{m_{{33}}}}&{m_{{22}}}^{2}
\end {pmatrix}.
$$}\smallskip 
 \end{ex}

 Inspired by \cite[\S9]{springer},  we define an {\it ideal of  $f$}  as a pair $(I_f,I_g)$  of  projective subspaces $I_f\subset \mathbb P(V_F)$  and $I_{g}\subset \mathbb P(V_G)$ (necessarily of the same dimension) such that  
 \begin{equation}
 j_f(x+I_F)-j_f(x)\subset I_G \qquad \mbox{ and }\qquad j_g(y+I_G)- j_g(y)\subset I_F
 \end{equation}
 for $x\in V_F$ and $y\in  V_G$ generic, 
where $j_f:V_F\dashrightarrow V_G$ and $j_g:V_G\dashrightarrow V_F$ are the 
rational maps  considered in Section \ref{S:CtoJ}. 
 In this case, $j_f$ and $j_g$ factor through $I_F$ and $I_G$  and their associated projectivization $\tilde{f}$ and $\tilde{g}$ are (normed) quadro-quadric Cremona transformations such that the following diagram commutes: 
  \begin{equation}
 \label{E:C-extension}
    \xymatrix@R=1.2cm@C=2cm{  
\mathbb P(V_{\!F})  \ar@{-->}[d]   \ar@{-->}@/^1pc/[r]^{f}&  \mathbb P(V_{\!G}) \ar@{-->}@/^1pc/[l]_{g}\ar@{-->}[d] \\
\mathbb P\big({V}_{\!F}/I_F\big)    \ar@{-->}@/^1pc/[r]^{\tilde{f}}&  \mathbb P\big({V}_{\!G}/I_G\big) . \ar@{-->}@/^1pc/[l]_{\tilde{g}} }
 \end{equation}
 
If $(I_f,I_g)$ is an ideal then $ f\big(  \langle x,I_f\rangle   \big)= \langle f(x),I_g\rangle$  for  generic $x$  in $\mathbb P(V_f)$. This implies that $I_g$ is completely determined  by $I_f$ and vice-versa. Thus we can say that $I_f\subset \mathbb P(V_F)$ is an ideal of $f$ and that $I_g\subset \mathbb P(V_G)$ is an ideal of $g$.  We will say that $I_f$ and $I_g$ are {\it corresponding} ideals.

An ideal $I_f\subset \mathbb P(V_F)$ is {\it radical} if $I_f\subset R_f$. It is equivalent to the fact that   $I_g\subset R_g$.  
Although  the  definition of ideal of a $f$ as above was formulated  in the  affine setting,  there is a projective characterization of radical ideals. If $E_f$ and $E_g$ are two projective subspaces of $\mathbb P(V_F)$ and $\mathbb P(V_G)$ respectively,   one defines 
 $df(E_f)\subset \mathbb P(V_G)$ as the projectivization of $dF_{V_F}({E}_F)=F(V_F, {E}_F)\subset V_G$\footnote{When $dF_{V_F}(E_F)=0$, one sets $df(E_f)=\emptyset$.} and in the analogue way one defines  $dg(E_g)\subset \mathbb P(V_F)$.
 \begin{prop} 
     \label{P:Radical-C}
     Assume that $E_f\subset R_f$ and $E_g\subset R_g$. 
  Then the  following assertions are equivalent:
  \begin{enumerate}
  \item   $E_f$ and $E_g$ are corresponding radical ideals for $f$ and $g$ respectively;
  \smallskip 
   \item one has $df(E_f)\subset E_g$ and $dg(E_g)\subset E_f$.
  \end{enumerate}
  \end{prop}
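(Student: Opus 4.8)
The plan is to translate the statement, which is purely about the Cremona map $f$ and its inverse $g$, into the language of Jordan algebras via the $JC$-correspondence, and to use the characterization of radical ideals that is already available for Jordan algebras (as alluded to by the reference to \cite[\S 9]{springer}). Concretely, I would first fix, using Theorem \ref{P:jordanXf}, linear isomorphisms $L_1,L_2$ identifying $V_F$ and $V_G$ with a rank $3$ Jordan algebra $\mathbb J$ in such a way that $F = L_1^{-1}\circ \#_{\mathbb J}\circ L_2$ and $G = L_2^{-1}\circ \#_{\mathbb J}\circ L_1$, exactly as in \S\ref{S:JCinAction}. Under these identifications $R_F$ corresponds to $L_2(R_F)=\Rad(\mathbb J)$ and $R_G$ to $L_1(R_G)=\Rad(\mathbb J)$ as well. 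A subspace $E_f\subset R_f$ then corresponds to a subspace $W := L_2(E_F)\subset \Rad(\mathbb J)$, and $E_g\subset R_g$ to $W' := L_1(E_G)\subset\Rad(\mathbb J)$; the map $j_f = \#_{\mathbb J}/N$ (up to the linear changes), so both conditions in the Proposition are conditions on a subspace of the radical of $\mathbb J$.

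The key observation, already recorded in the excerpt in the discussion preceding the Penico series in the rank $3$ case, is the identity $x\#y = d\#_x(y) = d\#_y(x)$ for all $x,y\in\mathbb J$, together with the fact $U_x(y)=T(x,y)x - x^\#\# y$ from \eqref{E:UxRank3} and $T(r,\mathbb J)=0$ for $r\in\Rad(\mathbb J)$ from \eqref{E:RadicalRank3}. Hence for $r\in\Rad(\mathbb J)$ one has $U_x(r) = -x^\#\# r = -d\#_{x^\#}(r)$, which shows that the "differential'' condition $df(E_f)\subset E_g$ becomes, after unravelling, the assertion that $\mathbb J\# W \subset W'$ (equivalently $U_x(W)\subset W'$ for all $x$), and symmetrically $\mathbb J\# W'\subset W$. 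So I would show:

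First, that (1) implies (2): if $(E_f,E_g)$ is a corresponding radical ideal, then differentiating the defining relation $j_f(x+E_F)-j_f(x)\subset E_G$ at a generic invertible $x$ and using that $j_f$ and $j_g=j_f^{-1}$ are homogeneous of degree $-1$, one extracts that the linear map $-d(j_f)_x^{-1}=P_f(x)=U_x$ (in the algebra $\mathbb J$) carries $W$ into $W'$; since this holds on a Zariski-dense set of $x$ and $x\mapsto U_x$ is polynomial, $U_x(W)\subset W'$ for all $x$, i.e. $df(E_f)\subset E_g$, and symmetrically. Second, that (2) implies (1): if $U_x(W)\subset W'$ for all $x$ and $U_y(W')\subset W$ for all $y$, then because $j_f(x+r)-j_f(x) = N(x+r)^{-1}F(x+r) - N(x)^{-1}F(x)$ and because it is classical (see \cite{springer}) that for $r\in\Rad(\mathbb J)$ one has $N(x+r)=N(x)$ and $(x+r)^\# - x^\# = x\# r + r^\# \in \mathbb J\# W + W^{(2)}$, the increment lies in $\frac{1}{N(x)}(\mathbb J\# W) \subset \frac{1}{N(x)}W'$, which is $W'$; and symmetrically for $g$. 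Thus $(E_f,E_g)$ is a corresponding ideal contained in $(R_f,R_g)$, hence radical. Combining the two implications gives the equivalence.

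The main obstacle I anticipate is purely bookkeeping: one must be careful that the two linear identifications $L_1$ and $L_2$ are genuinely compatible on the radical — i.e. that $L_2(R_F)$ and $L_1(R_G)$ are literally the \emph{same} subspace $\Rad(\mathbb J)$ and not merely isomorphic copies — and that under this identification a subspace $E_f$ of $R_f$ and its "corresponding'' $E_g$ go to subspaces $W, W'$ of $\Rad(\mathbb J)$ for which the Jordan-theoretic notions of "(radical) ideal'' (in the sense of \cite[\S 9]{springer}, namely $U$-stability $U_x(W)\subset W'$ together with the defining increment condition) match the affine conditions in the statement exactly. Once the dictionary is set up correctly this matching is immediate, because the conditions $\mathbb J\# W\subset W'$ and $\mathbb J\# W'\subset W$ are manifestly equivalent (via $x\#y=d\#_x(y)$) to $df(E_f)\subset E_g$ and $dg(E_g)\subset E_f$, and the equivalence of "$U$-stable subspace of the radical'' with "radical ideal'' is a standard fact in the Jordan-pair formalism; so the real work is just verifying these translations rather than proving anything new. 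I would therefore present the proof as a short reduction to \cite{springer}, spelling out the two increments $(x+r)^\#-x^\#$ and $N(x+r)-N(x)$ for $r$ in the radical and invoking \eqref{E:UxRank3} and \eqref{E:RadicalRank3}.
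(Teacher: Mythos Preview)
Your approach is essentially the same as the paper's: both reduce to Jordan theory via the identities \eqref{E:UxRank3} and \eqref{E:RadicalRank3} to show that the differential condition $dF(E_F)\subset E_G$ is equivalent to the $U$-stability condition, and then invoke \cite[\S 9]{springer} (the paper cites Proposition~9.6 there) for the equivalence of the latter with the ideal condition. The only cosmetic differences are that the paper works directly in the Jordan-pair formalism of Remark~\ref{R:CtoJPair} rather than passing through a single algebra via $L_1,L_2$, and that it does not unwind the implication (2)$\Rightarrow$(1) by hand as you do; note also a small slip in your (1)$\Rightarrow$(2): differentiating the increment condition gives $d(j_f)_x(E_F)\subset E_G$, i.e.\ $U_x^{-1}(W)\subset W'$, and one then uses $U_x^{-1}=U_{x^{-1}}$ and density to get $U_z(W)\subset W'$ for all $z$.
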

  \begin{proof} 
  Let $P_f: V_F\rightarrow {\rm End}(V_G,V_F)$ and $P_g:V_G\rightarrow {\rm End}(V_F,V_G)$ be  the quadratic maps considered in Remark 
   \ref{R:CtoJPair}.  For any subset $A_G\subset  V_G$, one defines $P_f(A_G)\subset V_F$ as the span of the images of $A_G$ by the maps $P_f(x)$ for $x$ varying in $V_F$. We use the corresponding notation  for $P_g(A_F)$ with $A_F\subset V_F$.

    Adapting (\ref{E:UxRank3}) to our setting and using  (\ref{E:RadicalRank3}), we deduce
    that  $P_f(E_F)\subset E_G$ (resp.  $P_g(E_G)\subset E_F$) if and only if $dF(E_F)\subset E_G$ (resp. $dG(E_G)\subset E_F$).
Proposition 9.6 of \cite{springer}, translated in our setting, ensures that $E_f$ and $E_g$ are corresponding ideals if and only if $P_f(E_F)\subset E_G$ and $P_g(E_G)\subset E_F$, proving the equivalence of conditions  {\it (1)} and {\it (2)}.
\end{proof}
 \smallskip 
  
Given a (normed) quadro-quadric Cremona transformation $\tilde{f}$, any   quadro-quadric Cremona  map $f$ inducing $\tilde{f}$ on
the quotient  by the corresponding radical ideal $I_f\subset R_f$ and $I_g\subset R_g$ will be called a {\it radical extension} of $\tilde{f}$ by $I_f$ (or by $(I_f,I_g)$).  This (radical) extension is {\it null} if the restriction of $f$ to a generic fiber of the canonical projection $\mathbb P(V_F)\dashrightarrow \mathbb P(V_F/I_F)$ is equivalent (as a rational map) to a linear map.

\begin{ex}[{\bf{continuation of Example \ref{Ex:Ce/e3}}}]
\label{Ex:ftilde}
{\rm 
 Let $f$ be the  projectivization of the map $F:\mathbb C^3\rightarrow \mathbb C^3$ defined by $F(a,b,c)=(a^2,-ab,b^2-ac)$, that is nothing but the adjoint map of the algebra $ \mathbb C[\varepsilon]/(\varepsilon^3)$ expressed in the basis $(1, \varepsilon, \varepsilon^2)$.
Then $f$ is a null  radical extension of the normed quadro-quadric Cremona transformation 
$\tilde{f}: \mathbb P^1\dashrightarrow \mathbb P^1$ defined as the projectivization of 
the quadratic map $\tilde{F}:  
(a,b)\mapsto (a^2,-ab)$ with associated cubic norm $\tilde N(a,b)=a^3$.}
\end{ex}

Let us define inductively a family of projective subspaces  of $\mathbb P(V_F)$ and $\mathbb P(V_G)$ by setting  $R_f^{[1]}=R_f$, $R_g^{[1]}=R_g$  and 
 $$\qquad 
 R_f^{[k+1]}=dg\big({R_g^{[k]}}\big) \subset \mathbb P\big(V_F \big)\; ,  \quad R_g^{[k+1]}=df\big({R_f^{[k]}}\big)\subset \mathbb P\big(V_G  \big) \qquad \mbox{for } k\geq 1.\smallskip  
 $$

By definition, 
$(R_f^{[k]})_{k\geq 0}$ is the {\it `Penico series'} of $f$.  
  It follows  from Proposition \ref{P:Radical-C} that  this is a  decreasing
  \vspace{-0.1cm} \\
   series of  radical ideals of 
 $\mathbb P(V_f)$. Moreover, $R_f^{[k]}$ and $R_g^{[k]}$ are corresponding ideals for every $k\geq 1$.  Then, passing to the quotients, 
 the map $f $ induces  a  normed quadro-quadric Cremona transformation
  \begin{equation*}
    \xymatrix@R=1.2cm@C=2cm{  
f^{[k]} : \, \mathbb P\big(\overline{V}_{\!\!F}^{[k]}\big) 
   \ar@{-->}[r]&  \mathbb P\big(\overline{V}_{\!\!G}^{[k]}\big) }
 \end{equation*}
for every $k\geq 1$,  where $\overline{V}_{\!\!F}^{[k]}$ and $\overline{V}_{\!\!G}^{[k]}$ stand for the quotients spaces $V_F/R_F^{[k]}$ and $V_G/R_G^{[k]}$ respectively.
 \bigskip 

We can now state the `$C$-version'  of Theorem \ref{T:AlbertPenico}:   
\smallskip

   \begin{thm} 
 \label{T:RadCrem}${}^{}$
  \begin{enumerate}
 \item[(1)] The Cremona transformations $f$ and $g$ factor through $R_f$ and $R_g$:  there are semi-simple (normed) quadro-quadric Cremona transformations
 $\overline{f}$ and $\overline{g}$ such that the following diagram commutes  \begin{equation*}
    \xymatrix@R=1.2cm@C=2cm{  
\mathbb P(V_F)  \ar@{-->}[d]^{\pi_f}   \ar@{-->}@/^1pc/[r]^{f}&  \mathbb P(V_G) \ar@{-->}@/^1pc/[l]_{g}\ar@{-->}[d]_{\pi_g} \\
\mathbb P\big(\overline{V}_{\!\!F}\big)    \ar@{-->}@/^1pc/[r]^{\overline{f}}&  \mathbb P\big(\overline{V}_{\!\!G}\big).  \ar@{-->}@/^1pc/[l]_{\overline{g}} }
 \end{equation*}
 \medskip

 \item[(2)]  There exist linear embeddings $\sigma_f: \mathbb P(\overline{V}_{\!\!F})\hookrightarrow \mathbb P({V}_F)$ and $\sigma_g: \mathbb P(\overline{V}_{\!\!G})\hookrightarrow \mathbb P({V}_G)$ whose images are  linear spaces supplementary to $R_f$ and $R_g$ respectively, such that the diagram below commutes:   
 \begin{equation*}
    \xymatrix@R=1.7cm@C=2cm{  
\mathbb P(V_F)  \ar@{-->}[d]^{\pi_f}   \ar@{-->}@/^1pc/[r]^{f}&  \mathbb P(V_G) \ar@{-->}@/^1pc/[l]_{g}\ar@{-->}[d]_{\pi_g} \\
\mathbb P\big(\overline{V}_{\!\!F}\big)    \ar@{-->}@/^1pc/[r]^{\overline{f}}
 \ar@{_{(}->}@/^2pc/[u]^{\sigma_f}
&  
\ar@{^{(}->}@/_2pc/[u]_{\sigma_g}
\mathbb P\big(\overline{V}_{\!\!G}\big) \ar@{-->}@/^1pc/[l]_{\overline{g}}. 
}
 \end{equation*}
 Moreover, the pair $(\sigma_f,\sigma_g)$ is unique modulo the action of the structure group
 given by 
 $$\qquad \gamma\cdot (\sigma_f,\sigma_g)=(\gamma\circ \sigma_f,\gamma^\#\circ \sigma_g)
  \quad  \mbox{for }\, \gamma \in {\rm Str}(f). \smallskip
 $$
  \item[(3)]   The radical $R_f$  of $f$  is solvable: there exists $t>0$ such that $R_f^{[t]}$ is empty.  Moreover,  $f^{[\ell]}$ is a radical 
  \vspace{-0.1cm}\\
  null extension of $f^{[\ell-1]}$ for $\ell=2,\ldots,t$ so that   $f$ can be obtained from its semi-simple part $\overline{f}$ by the successive series of non-trivial null radical extensions represented by the commutative diagram
     \begin{equation*}
    \xymatrix@R=1cm@C=0.7cm{ 
 \mathbb P\big(V_F\big)   \ar@{-->}[r]  \ar@{-->}[d]_{f=f^{[t]}}  &   \cdots \ar@{-->}[r] &   \mathbb P\big(\overline{V}_{\!\!F}^{[\ell]}\big)  \ar@{-->}[d]_{f^{[\ell]}} \ar@{--}[r] &   \ar@{-->}[r]&   \mathbb P\big(\overline{V}_{\!\!F}^{[\ell-1]}\big)    \ar@{-->}[d]^{f^{[\ell-1]}}      \ar@{-->}[r]& \cdots \ar@{-->}[r]&  \mathbb P\big(\overline{V}_{\!\!F}\big) 
    \ar@{-->}[d]^{f^{[1]}=\overline{f}} 
  \\
 \mathbb P\big(V_G\big)   \ar@{-->}[r] & \cdots \ar@{-->}[r]&
  \mathbb P(\overline{V}_{\!\!G}^{[\ell]}) \ar@{--}[r] &  \ar@{-->}[r]  &
  \mathbb P\big(\overline{V}_{\!\!G}^{[\ell-1]}\big) 
    \ar@{-->}[r]& \cdots \ar@{-->}[r]&  \mathbb P\big(\overline{V}_{\!\!G}\big) \, ,      
 }
 \end{equation*}
where all the horizontal maps are the ones induced by the canonical linear projections 
$\overline{V}_{\!\!F}^{[\ell]}\rightarrow \overline{V}_{\!\!F}^{[\ell-1]}$. 
   \end{enumerate}
 \end{thm}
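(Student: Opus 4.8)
The plan is to deduce all three assertions from the corresponding parts of Theorem \ref{T:AlbertPenico} by means of the $XJC$-correspondence (Theorem \ref{T:main}), using the dictionary already set up in \S\ref{S:JCinAction} and \S\ref{S:GenStruQqCremona}. Concretely, I would fix quadratic lifts $F$ of $f$ and $G$ of $g$ and, through the $JC$-correspondence, a rank $3$ Jordan algebra $\mathbb J$ together with linear isomorphisms $L_1,L_2\in GL(V)$ such that $F=L_1^{-1}\circ \#_{\mathbb J}\circ L_2$ and $G=L_2^{-1}\circ \#_{\mathbb J}\circ L_1$; by Proposition \ref{P:radical} and \eqref{E:RadicalRank3}, $L_2$ identifies $R_F$ with $\Rad(\mathbb J)$ and $L_1$ identifies $R_G$ with $\Rad(\mathbb J)$. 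Since every object in the statement is defined up to linear equivalence (respectively isotopy), it is legitimate to transport each assertion to $\mathbb J$, prove it there with Jordan theory, and transport it back. For part (1) I would invoke Theorem \ref{T:AlbertPenico}(1): $\mathbb J_{\ss}=\mathbb J/\Rad(\mathbb J)$ is semi-simple, and by the Lemma of \S\ref{S:JCinAction} the adjoint $\#_{\mathbb J}$ descends to $\#_{\mathbb J_{\ss}}$; transporting through the quotient maps $\overline L_1,\overline L_2$ gives the semi-simple (normed) Cremona transformations $\overline f,\overline g$ and the required commutative square, while the fact that $\overline f$ is semi-special is Theorem \ref{T:simple} (equivalence of (ii) and (iii)), or Proposition \ref{P:semi}(1).

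For part (2) I would use Theorem \ref{T:AlbertPenico}(2): a Jordan-algebra embedding $\sigma\colon \mathbb J_{\ss}\hookrightarrow \mathbb J$ splitting the exact sequence $0\to\Rad(\mathbb J)\to\mathbb J\to\mathbb J_{\ss}\to 0$ projectivizes and, conjugated by $L_1$ respectively $L_2$, yields the linear embeddings $\sigma_f,\sigma_g$ onto complements of $R_f,R_g$ that make the displayed diagram commute. The uniqueness statement is where I expect the main difficulty: Theorem \ref{T:AlbertPenico}(2) gives uniqueness of $\sigma$ only up to left composition by an element of $\operatorname{Aut}(\mathbb J)$, which sits inside $\Str(\mathbb J)$ since automorphisms commute with the adjoint, and one must further check that, under the identification $F=L_1^{-1}\#_{\mathbb J}L_2$, the structure group $\Str(f)$ is conjugate (via $L_2$, resp. $L_1$) to the group of structure transformations of $\mathbb J$ and acts on the pairs $(\sigma_f,\sigma_g)$ by $\gamma\cdot(\sigma_f,\sigma_g)=(\gamma\circ\sigma_f,\gamma^{\#}\circ\sigma_g)$, absorbing both the $\operatorname{Aut}(\mathbb J)$-freedom and the ambiguity in the choice of $(L_1,L_2,\mathbb J)$. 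I would carry this out by unwinding the defining relation $F\circ\gamma=\gamma^{\#}\circ F$ and comparing it with the relation satisfied by any two presentations of $f$ through $\#_{\mathbb J}$, which both reduce to membership in $\Str(\mathbb J)$.

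For part (3), I would start from Proposition \ref{P:Radical-C}, which already shows that each $R_f^{[k]}$ is a radical ideal and that $(R_f^{[k]},R_g^{[k]})$ are corresponding ideals, so each $f^{[k]}$ is a well-defined normed quadro-quadric transformation and the displayed tower of canonical projections makes sense. Rewriting the recursion $R_g^{[k+1]}=df(R_f^{[k]})$, $R_f^{[k+1]}=dg(R_g^{[k]})$ through $F=L_1^{-1}\#_{\mathbb J}L_2$ and the formula $R^{[k+1]}=\mathbb J\#R^{[k]}=\langle d\#_x(R^{[k]})\mid x\in\mathbb J\rangle$ recalled in \S\ref{S:Jordan-II} identifies $R_f^{[k]}$ (via $L_2$, resp. $L_1$) with the Jordan Penico series $\Rad(\mathbb J)^{[k]}$; solvability of $\Rad(\mathbb J)$ (Theorem \ref{T:AlbertPenico}(3)) then gives the smallest $t$ with $\Rad(\mathbb J)^{[t]}=0$, hence $R_f^{[t]}=\emptyset$, and $f^{[\ell]}$ is by construction a radical extension of $f^{[\ell-1]}$ for $\ell=2,\dots,t$. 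To see it is a \emph{null} extension I would use that for $r\in R^{[\ell-1]}$ one has $r^{\#}=\tfrac12\,r\#r\in\mathbb J\#R^{[\ell-1]}=R^{[\ell]}$, so that $(x+r)^{\#}\equiv x^{\#}+d\#_x(r)\pmod{R^{[\ell]}}$ is affine-linear in $r$; hence the restriction of $f^{[\ell]}$ to a generic fibre of $\mathbb{P}(\overline{V}_{\!\!F}^{[\ell]})\map\mathbb{P}(\overline{V}_{\!\!F}^{[\ell-1]})$ is the projectivization of an affine-linear map, i.e.\ linear as a rational map, which is precisely the $C$-world translation of the fact that $\mathbb J^{[\ell]}$ is a null extension of $\mathbb J^{[\ell-1]}$. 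Finally I would assemble the commutative diagrams produced in (1), (2) and in the successive null extensions to obtain the displayed tower; the bulk of this argument is routine translation, and the only genuinely delicate point is the identification of the structure group and its action in part (2).
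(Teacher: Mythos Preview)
Your proposal is correct and follows essentially the same approach as the paper. Indeed, the paper does not give a separate proof of this theorem at all: it is presented as the ``$C$-version'' of Theorem \ref{T:AlbertPenico}, obtained by translating the Albert--Penico structure theorem through the $JC$-correspondence using the dictionary already set up in \S\ref{S:JCinAction} and \S\ref{S:GenStruQqCremona}, exactly as you outline.
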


\begin{ex}[{\bf{continuation of Example \ref{Ex:Ce/e3}}}]
{\rm We apply the third part of the previous result to the projectivization $f$ of the adjoint map $F(a,b,c)=(a^2,-ab,b^2-ac)$ of  $A=\mathbb C[\epsilon]/(\epsilon^3)$. The associated  Penico series  is $\emptyset =R_f^{[3]}\subset R_f^{[2]}=\mathbb P\langle \varepsilon^2\rangle \subset R_f^{[1]}=\mathbb P R_A=
\mathbb P \langle \epsilon,\epsilon^2\rangle
$.  Hence one has $f^{[3]}=f$, $f^{[1]}=\overline{f}$ (see Example \ref{Ex:fss}) and  $f^{[2]}: \mathbb P^1\dashrightarrow \mathbb P^1$ is nothing but the normed quadro-quadric Cremona transformation $\tilde{f}$ of Example \ref{Ex:ftilde}.}
\end{ex}\medskip

We also point out an immediate consequence of the previous result in the affine setting.

 \begin{coro} 
  \label{T:structureBir22}
  Let $F$ be an affine   lift of a quadro-quadric Cremona transformation $f$.  
Set $R=R_F$ and $\overline{V}=V/R$.  Then 
  $F$ is linearly equivalent to a quadratic  map  of the form 
 \begin{align*}
 \overline{V}\oplus R  \; & \longrightarrow \quad \overline{V}\oplus R\\ 
 (x,r) & \longmapsto \big(  \overline{F}(x), \mathcal F(x,r)+\mathscr F(r)   \big)
  \end{align*}
  where   \begin{itemize}
  \item[--] $\overline{F}$ is the semi-simple part of $F$  (so is equivalent to one of the  quadratic maps in Table \ref{tableFss});\smallskip 
  \item[--] $\mathcal F:  \overline{V}\times R\rightarrow R$ is a bilinear map; \smallskip
  \item[--] $\mathscr F: R\rightarrow R$ is  a quadratic map such 
that $\mathscr G \circ \mathscr F\equiv 0$ for another nontrivial quadratic map 
$\mathscr G:R\rightarrow R$.\smallskip 
  \end{itemize}
  
  Moreover, $f$ is a null extension of its semi-simple part if and only if the quadratic map $\mathscr F$ vanishes identically.
   \end{coro}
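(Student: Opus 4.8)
The plan is to read Theorem \ref{T:RadCrem} in affine coordinates, using the dictionary of Theorem \ref{P:jordanXf} to pass to the Jordan side. Since the conclusion is only asserted up to linear equivalence, I would first replace $F$ by a linearly equivalent lift and assume, by Theorem \ref{P:jordanXf} and the discussion of \S\ref{S:JCinAction}, that $F=\#_{\mathbb J}$ is the adjoint map of a rank $3$ Jordan algebra $\mathbb J$ and that $R=R_F=R_G=\Rad(\mathbb J)$. By Theorem \ref{T:AlbertPenico}(2) there is a vector-space decomposition $\mathbb J=\sigma(\mathbb J_{\ss})\oplus R$ with $\sigma(\mathbb J_{\ss})$ a unital subalgebra; using the corresponding linear coordinates $(x,r)$ on both the source and the target copy of $V$, the statement reduces to a direct inspection of $(x+r)^{\#}$.

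I would then compute the two components of $F(x,r)=(x+r)^{\#}$. Its $\mathbb J_{\ss}$-component is controlled by the commutative square of \S\ref{S:JCinAction} (equivalently Theorem \ref{T:RadCrem}(1)): the projection $\pi\colon\mathbb J\to\mathbb J/R=\mathbb J_{\ss}$ intertwines $\#_{\mathbb J}$ with $\#_{\mathbb J_{\ss}}$, so this component equals $\overline F(x):=\#_{\mathbb J_{\ss}}(x)$, which is independent of $r$ and is by construction the semi-simple part of $F$, hence one of the maps of Table \ref{tableFss}. For the $R$-component $H(x,r)$, the crucial observation is that $\sigma(\mathbb J_{\ss})$ is a unital subalgebra and $z^{\#}=z^{2}-T(z)z+S(z)e$, so $F(x,0)\in\sigma(\mathbb J_{\ss})$ and therefore $H(x,0)=0$; splitting the quadratic map $H$ into its bihomogeneous pieces gives $H(x,r)=\mathcal F(x,r)+\mathscr F(r)$ with $\mathcal F$ bilinear and $\mathscr F(r)=H(0,r)$ quadratic. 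Moreover $\mathscr F(r)=r^{\#}$ for $r\in R$, since $\pi(r^{\#})=\#_{\mathbb J_{\ss}}(\pi r)=0$ forces $r^{\#}\in R$.

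The condition on $\mathscr F$ is then squeezed out of two standard facts on the radical: $N(r)=0$ for every $r\in R$ (by (\ref{E:RadicalRank3}), see also Proposition \ref{P:radical}) and the identity $(z^{\#})^{\#}=N(z)\,z$. As $\mathscr F$ maps $R$ into $R$ and coincides there with $z\mapsto z^{\#}$, one gets $\mathscr F(\mathscr F(r))=(r^{\#})^{\#}=N(r)\,r=0$, that is $\mathscr F\circ\mathscr F\equiv 0$; so if $\mathscr F\not\equiv 0$ I would take $\mathscr G=\mathscr F$, while if $\mathscr F\equiv 0$ (and $R\neq 0$) any nonzero quadratic map $\mathscr G\colon R\to R$ works. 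For the last assertion I would compute the restriction of $f$ to a generic fibre of $\mathbb P(V_F)\dashrightarrow\mathbb P(V_F/R)$: with $x_0\in\sigma(\mathbb J_{\ss})$ general the fibre is $\mathbb P(\mathbb C x_0\oplus R)$ and $F(\lambda x_0+r)=\big(\lambda^{2}\,\overline F(x_0),\ \lambda\,\mathcal F(x_0,r)+\mathscr F(r)\big)$, so this restriction is the rational map $[\lambda:r]\mapsto[\lambda^{2}:\lambda\,\mathcal F(x_0,r)+\mathscr F(r)]$ between two projective spaces $\mathbb P(\mathbb C\oplus R)$. When $\mathscr F\equiv 0$ it becomes $[\lambda:r]\mapsto[\lambda:\mathcal F(x_0,r)]$, a projective-linear isomorphism, so $f$ is a null extension; when $\mathscr F\not\equiv 0$ the two defining forms have no common linear factor for general $x_0$ (since $\lambda\nmid\mathscr F$), so the restricted map has non-empty base scheme $\{\lambda=0,\ \mathscr F(r)=0\}$ and hence is not equivalent, as a rational map, to a linear one.

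I expect the two slightly delicate points to be the production of the auxiliary nonzero map $\mathscr G$ — which forces the case distinction above and relies on the Jordan identity on the radical rather than on any purely ``cremonian'' input — and the very last step, where one must rule out that a quadratic map $[\lambda:r]\mapsto[\lambda^{2}:\lambda\,\mathcal F(x_0,r)+\mathscr F(r)]$ with $\mathscr F\not\equiv 0$ can be straightened to a linear isomorphism; there it suffices to observe that for general $x_0$ the base scheme $\{\lambda=0,\ \mathscr F(r)=0\}$ is non-empty of codimension at most $\dim R$, so the map is not an isomorphism. Everything else is bookkeeping with the splitting $\mathbb J=\sigma(\mathbb J_{\ss})\oplus R$ and the formulae already recorded in \S\ref{S:Jordan-II}.
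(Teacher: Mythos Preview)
Your argument is correct and is precisely the unpacking the paper has in mind: the corollary is stated as ``an immediate consequence'' of Theorem~\ref{T:RadCrem}, and your route---pass to $F=\#_{\mathbb J}$ via Theorem~\ref{P:jordanXf}, use the Albert--Penico splitting $\mathbb J=\sigma(\mathbb J_{\ss})\oplus R$, and read off the bihomogeneous pieces of $(x+r)^{\#}$---is exactly that. The identification $\mathscr F=\#\lvert_R$ together with $(r^{\#})^{\#}=N(r)r=0$ on $R$ is the intended source of the constraint on $\mathscr F$, and your fibre computation for the null-extension clause matches the paper's definition verbatim.
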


\begin{ex}[{\bf{continuation of Example \ref{Ex:Ce/e3}}}]
\label{Ex:Penico-f}
  {\rm 
 Let us  consider again the quadratic map $F(a,b,c)=(a^2,-ab,b^2-ac)$.  With the notation  of  the previous  corollary,  one has  $V=A=\mathbb C[\epsilon]/(\epsilon^3)$, $\overline{V}=\mathbb C \, 1$, $R=\epsilon \, A\simeq \mathbb C^2$, $\overline{F}(a)=a^2$, $ \mathcal F(a,(b,c))=(-ab,-ac)$ and $  \mathscr F(b,c)=(0,b^2)$ for every $(a,b,c)\in V=\mathbb C^3$. }
  \end{ex}

\subsection{The general structure of twisted cubics over Jordan algebras}
  In this section, $X\subset \mathbb P^{2n+1}$ stands for a fixed element of $\boldsymbol{X}^n(3,3)$ with $n\geq 3$. According to the `$XJ$-correspondence', one can assume that  there exists a rank 3 Jordan algebra $\mathbb J$ of dimension $n$ such that $X=X_\mathbb J$. 
  In what follows, we will write 
     $Z_2(\mathbb J)= \mathbb C\oplus \mathbb J\oplus \mathbb J\oplus \mathbb C$ for simplicity and 
     $(\alpha,x,y,\beta)$ will stand for linear coordinates on $Z_2(\mathbb J)$ corresponding to the decomposition in direct sum of the complex vector space $Z_2(\mathbb J)$.   According to our hypothesis, $X$ is the closure of the image of the affine embedding 
     $\mu=\mu_{\mathbb J}: \mathbb J\hookrightarrow \mathbb PZ_2(\mathbb J): x\mapsto [1:x:x^\# : N(x)]$ considered  in Section \ref{S:fromJ}. Moreover, there exists a  family $\Sigma_X$ of twisted cubics included in $X$, which is  3-covering and unique.\smallskip 
  
  In order to state the `$X$-version' of Theorem  \ref{T:AlbertPenico} we shall introduce some terminology and recall
   some preliminary results.

    \subsubsection{\bf The conformal group} 
     
     The {\it structure group} ${\rm Str}(\mathbb J)$ of $\mathbb J$ is the algebraic subgroup ${\rm Str}(\#_{\mathbb J})$ of $GL(\mathbb J)$ associated to the adjoint map 
       of $\mathbb J$ defined in  Section \ref{S:GenStruQqCremona} above. 
    It can be verified that there exists a non-trivial character $  \gamma\mapsto \eta_\gamma$ on ${\rm Str}(\mathbb J)$ such that $N(\gamma(x))=\eta_\gamma N(x)$ for every $\mathbb J$ and every $\gamma\in {\rm Str}(\mathbb J)$.  
Then one defines  
the {\it conformal group} of $\mathbb J$, denoted by ${\rm Conf}(\mathbb J)$, as the  subgroup  of the group
        of affine birational transformations of $\mathbb J$ generated by ${\rm Str}(\#_{\mathbb J})$, by the inversion $j:x\dashrightarrow x^{-1}$ and by the translations $t_w:x\mapsto x+w$ (with $w\in \mathbb J$). 
     \smallskip

The projective representation $\rho: {\rm Conf}(\mathbb J)\rightarrow PGL(Z_2(\mathbb J))$ is defined in the following way:
\begin{align*}
     \rho( j) \cdot \theta= & \, [\beta: y: x:\alpha]  \\
           \rho(\gamma)\cdot \theta= & \, [\alpha: \gamma(x): \gamma^\#(y):\eta_\gamma \beta]\\
  \mbox{and }\;   \rho(t_w) \cdot \theta= & \, [\alpha: x+\alpha w: y+ x\#w+\alpha w^\#: \beta+T(y,w)+T(x,w^\#)+\alpha N(w) ]      \end{align*}
     for every $\theta=[\alpha:x:y:\beta]\in \mathbb P Z_2(\mathbb J)$, $ \gamma\in {\rm Str}(\mathbb J)$ and $ w\in \mathbb J$.  It can be verified that $\rho(j)\cdot \mu=\mu\circ j$, $\rho(\gamma)\cdot \mu=\mu\circ \gamma$ and $\rho(t_w)\cdot \mu=\mu \circ t_w$ for every structural transformation $\gamma$ and every translation $t_w$.  This implies that the image of $\rho$ in $PGL(Z_2(\mathbb J))$ 
     is contained in the  group ${\rm Aut}(X)$ of projective automorphisms of $X$. 
%         \begin{prop}
  %   \label{P:ConfX=AutX} The projective representation $\rho: {\rm Conf}(\mathbb J)\rightarrow PGL(Z_2(\mathbb J))$
   %  is faithful and  $\rho({\rm Conf}(\mathbb J))={\rm Aut}(X)$.
      %   \end{prop}
          \begin{prop}
     \label{P:ConfX=AutX} The  representation $\rho: {\rm Conf}(\mathbb J)\rightarrow PGL(Z_2(\mathbb J))$
     is faithful and  $\rho({\rm Conf}(\mathbb J))={\rm Aut}(X)$.
         \end{prop}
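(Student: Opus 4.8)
The plan is to prove the two assertions --- faithfulness of $\rho$ and the equality $\rho(\mathrm{Conf}(\mathbb{J}))=\mathrm{Aut}(X)$ --- separately, the first being comparatively routine and the second requiring the geometry of the family $\Sigma_X$ of twisted cubics.

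First I would establish faithfulness. Since we already noted $\rho(j)\cdot\mu=\mu\circ j$, $\rho(\gamma)\cdot\mu=\mu\circ\gamma$ and $\rho(t_w)\cdot\mu=\mu\circ t_w$, an element $\phi\in\mathrm{Conf}(\mathbb{J})$ with $\rho(\phi)=\mathrm{Id}$ in $PGL(Z_2(\mathbb{J}))$ would fix $X=\overline{\mu(\mathbb{J})}$ pointwise and, pulling back through the birational parametrization $\mu$, would act as the identity on $\mathbb{J}$ as a birational self-map; hence $\phi=\mathrm{Id}$ in $\mathrm{Conf}(\mathbb{J})$. This reduces faithfulness to the (elementary) observation that a conformal transformation of $\mathbb{J}$ that is the identity as a birational map is the identity element of the group --- which follows from the normal-form description of $\mathrm{Conf}(\mathbb{J})$ as generated by $\mathrm{Str}(\mathbb{J})$, inversion and translations, together with the fact that these act effectively on $\mathbb{J}$.

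Next I would prove $\rho(\mathrm{Conf}(\mathbb{J}))=\mathrm{Aut}(X)$. The inclusion $\subseteq$ was already recorded. For the reverse inclusion, take $g\in\mathrm{Aut}(X)\subset PGL(Z_2(\mathbb{J}))$. The key point is that $g$ must preserve the unique $3$-covering family $\Sigma_X$ of twisted cubics on $X$ (uniqueness is part of the hypothesis recalled just before \S6.3, since $X\in\boldsymbol{X}^n(3,3)$), because this family is intrinsically attached to $X$. Consequently $g$ permutes, for each point $x\in X$, the varieties of lines through $x$ and more importantly acts compatibly with the tangential projections $\pi_x$. Composing $g$ with a suitable translation $\rho(t_w)$ I may assume $g$ fixes the point $0_{\mathbb{J}}=\mu(\mathbf 0)=[1:\mathbf 0:\mathbf 0:0]$; composing further with $\rho(j)$ if necessary I may arrange that $g$ also fixes $\infty_{\mathbb{J}}=[0:\mathbf 0:\mathbf 0:1]$, using that the pair $(0_{\mathbb{J}},\infty_{\mathbb{J}})$ is determined up to the action of $\mathrm{Conf}(\mathbb{J})$ among general pairs of points of $X$ joined by members of $\Sigma_X$. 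An automorphism of $X$ fixing these two points preserves $T_{0_{\mathbb J}}X$ and $T_{\infty_{\mathbb J}}X$, hence is block-diagonal in the decomposition $Z_2(\mathbb{J})=\mathbb{C}\oplus\mathbb{J}\oplus\mathbb{J}\oplus\mathbb{C}$: it has the shape $[\alpha:x:y:\beta]\mapsto[\lambda\alpha:A(x):B(y):\nu\beta]$ for scalars $\lambda,\nu$ and linear isomorphisms $A,B$ of $\mathbb{J}$. The condition that this preserves $\mu(\mathbb{J})=\{[1:x:x^{\#}:N(x)]\}$ forces, after normalizing $\lambda=1$, the identities $A(x)^{\#}=\mu\cdot B(x^{\#})$ and $N(A(x))=\nu N(x)$ for all $x$ (a scalar $\mu$ absorbed by rescaling), which say precisely that $A\in\mathrm{Str}(\mathbb{J})$ with $\eta_A=\nu$ and $B=A^{\#}$. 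Therefore $g=\rho(\gamma)$ with $\gamma=A$, and unwinding the two compositions shows the original $g$ lies in $\rho(\mathrm{Conf}(\mathbb{J}))$.

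The main obstacle I anticipate is the normalization step: justifying that, using only elements of $\mathrm{Conf}(\mathbb{J})$, one can move an arbitrary $g\in\mathrm{Aut}(X)$ so that it fixes both $0_{\mathbb{J}}$ and $\infty_{\mathbb{J}}$. This requires knowing that $\mathrm{Conf}(\mathbb{J})$ acts transitively enough on the relevant configuration of points of $X$ (a general point, and a general pair of points lying on a common cubic of $\Sigma_X$) --- equivalently, that the tangential-projection description of \S\ref{S:XtoJ} identifies the ``$2$-point data'' with the Jordan-pair data in a way respected by conformal transformations. This is morally contained in the constructions of \S\ref{S:fromXworld}--\S\ref{S:XtoJ} and in \cite{springer}, but assembling it cleanly is where the real work lies; once it is in place, the block-diagonal computation and the matching with $\mathrm{Str}(\mathbb{J})$ are routine.
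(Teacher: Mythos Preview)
Your strategy --- reduce by conformal transformations to an automorphism fixing $0_{\mathbb J}$ and $\infty_{\mathbb J}$, then identify it with an element of $\mathrm{Str}(\mathbb J)$ --- is the same as the paper's. Two points need repair, and the endgame is genuinely different.

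\textbf{The normalization.} Composing with $\rho(j)$ does not do what you want: $\rho(j)$ swaps $0_{\mathbb J}$ and $\infty_{\mathbb J}$, so once $g(0_{\mathbb J})=0_{\mathbb J}$ is arranged, applying $\rho(j)$ destroys it. What is needed is transitivity of $\mathrm{Conf}(\mathbb J)$ on suitable pairs, but $g(\infty_{\mathbb J})$ is not a priori a \emph{generic} point of $X$, so this requires care. The paper sidesteps this: it uses that the $\mathrm{Conf}(\mathbb J)$-orbit of the standard cubic $C_0$ (through $0_X,e_X,\infty_X$) is dense in $\Sigma_X$ (by transitivity on generic triples, \cite[Proposition~4.7]{PR}), picks a cubic $C$ in this orbit with $\varphi(C)$ also in the orbit, reduces to $\varphi(C_0)=C_0$, and then uses that the stabilizer of $C_0$ in $\mathrm{Conf}(\mathbb J)$ acts as $PGL_2$ on $C_0\simeq\mathbb P^1$ to fix $0_X$ and $\infty_X$.

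\textbf{The block-diagonal step.} Fixing $0_{\mathbb J},\infty_{\mathbb J}$ and preserving the two tangent spaces only yields a map of the shape $[\alpha:x:y:\beta]\mapsto[\lambda\alpha+\ell(x):A(x):B(y):m(y)+\nu\beta]$; the tangent data alone do not single out the middle $\mathbb J$-summands. You can kill $\ell,m$ either by also using the second osculating spaces of $X$ at $0_{\mathbb J},\infty_{\mathbb J}$ (which are $\mathbb C\oplus\mathbb J\oplus\mathbb J\oplus 0$ and $0\oplus\mathbb J\oplus\mathbb J\oplus\mathbb C$), or by a degree count: the condition that $[1:x:x^\#:N(x)]$ lands on $X$ forces $(\lambda+\ell(x))\,B(x^\#)=A(x)^\#$, which is impossible for $\ell\neq 0$.

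\textbf{Comparison.} For the final identification the paper takes a different route: rather than your direct matching of the block-diagonal form against $\mu$, it observes that the pair of differentials $(\varphi_0,\varphi_\infty)$ is an automorphism of the Jordan pair $(V^+,V^-)$ constructed in Theorem~\ref{T:XJ}, invokes \cite[Proposition~1.8]{loos} to get $\varphi_0\in\mathrm{Str}(\mathbb J)$ with $\varphi_\infty=(\varphi_0^\#)^{-1}$, and then proves $\varphi=\rho(\varphi_0)$ curve by curve: each twisted cubic through $0_X,\infty_X$ is determined by its tangent direction at $0_X$, and a projective isomorphism between two such cubics fixing both marked points is determined by its differential there. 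Your approach is more elementary in that it avoids Jordan-pair machinery and the appeal to Loos; the paper's is more intrinsic and makes transparent why the infinitesimal data at $0_X$ determine $\varphi$ globally.
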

     \begin{proof}
     Let $\varphi\in {\rm Aut}(X)$.  Being projective, it induces an automorphism of $\Sigma_X$, again denoted by $\varphi$.   
     The twisted cubic  $C_0=\{[s^3:s^2 t e: s t^2 \, e: t^2] \in \mathbb PZ_2(\mathbb J)\, \big\lvert \, [s:t]\in \mathbb P^1  \}$ is included in $X$ and passes through the three points $0_X=\mu(0)=[1:0:0:0]$,  $e_X=\mu(e)=[1:e:e:1]$ and  $\infty_X=\mu(\infty)=[0:0:0:1]$ of $X$.   Since ${\rm Conf}(\mathbb J)$ acts transitively on generic 3-uples of points in $X$ (see \cite[Proposition 4.7]{PR} for instance),  it comes that the orbit  $\Gamma={\rm Conf}(\mathbb J)\cdot C_0$  of $C_0$ is dense in the 3-covering family $\Sigma_X$ of cubics included in $X$. Thus there exists $C\in \Gamma$ such that $\varphi(C)\in \Gamma$
     and, modulo compositions on the left and on the right by conformal automorphisms of $X$, we can assume  $\varphi(C_0)=C_0$. Moreover, since the subgroup of conformal transformations of $X$ fixing $C_0$ acts  as ${\rm Aut}(C_0)\simeq PGL_2(\mathbb C)$ on $C_0\simeq \mathbb P^1$, we can also suppose that $\varphi$ fixes the points $0_X$ and $\infty_X$ of $C_0$.  It follows that $\varphi$ induces an automorphism of the subfamily  $\Sigma^*_X$ of $ \Sigma_X$ formed by the twisted cubics included in $X$  and passing through $0_X$ and $\infty_X$. 
  \smallskip 
  
     Let us denote by   $V^+$ and $V^-$  the abstract tangent spaces of $X$ at $0_X$ and  $\infty_X$ respectively.   
     The differential $\varphi_{0}=d\varphi_{0_X}$ (resp. $\varphi_{\infty}=d\varphi_{\infty_X}$) of $\varphi$ at $0_X$ (resp. at $\infty_X$) is a  linear automorphism of $V^+$ (resp. of $V^-$).  Then reasoning as in  the proof of Theorem
     \ref{T:XJ} we deduce that $(\varphi_0,\varphi_\infty)$ is an automorphism of  the Jordan pair  $(V^+,V^-)$  constructed  there.
      Taking 
     $u=\exp^{-1}(e_X)\in V^-$ 
     as  invertible element (see the notation at the end of the proof of Theorem
     \ref{T:XJ}), one obtains that $V^+_u=\mathbb J$ as Jordan algebras.  It follows then from \cite[Proposition 1.8]{loos}  that $\varphi_0\in {\rm Str}(\mathbb J)$ and  $\varphi_{\infty}=(\varphi_0^\#)^{-1}$.      
          \smallskip 
          
          Let us now prove that the action of $\psi_0=\rho(\varphi_0)$ on $X$ coincides with that of $\varphi$.  Let $x\in X$ be a general point. 
          By hypothesis, there exists a twisted cubic  $C_x \in \Sigma_X^*$ passing through $x$ that is unique according to \cite{PT} or \cite[Theorem 2.4 (1)]{PR}. This implies that the tangent map $\Sigma_X^*\rightarrow \mathbb P(V^+)$ that associates to $C_x$ its projective tangent line at $0_X$ is 1-1 onto its image.   In particular, this gives us that $\varphi(C_x)=\psi_0(C_x)$. Since $\varphi_x=\varphi\lvert_{C_x}: C_x\rightarrow \varphi(C_x)$ is a projective isomorphism that lets $0_X$ and $\infty_X$ fixed, it is completely determined by its differential at one point, for instance at $0_X$.  Since $d(\varphi)_{0_X}=d (\psi_0)_{0_X}=\varphi_0$, this shows that  $\varphi_x$ and $\psi_0\lvert_{C_x}$ coincides so that  $\psi_0(x)=\varphi_x(x)=\varphi(x)$. From the generality of $x\in X$, we deduce  that $\varphi=\psi_0$.
            Since  $\psi_0=\rho(\varphi_0)$ with  $\varphi_0\in {\rm Str}(\mathbb J)$,  this gives us that   $\rho({\rm Conf}(\mathbb J))={\rm Aut}(X)$.
          \smallskip 
          
            Finally,  let $\nu\in {\rm Conf}(\mathbb J)$ such that $\rho(\nu)={\rm Id}_X$.  From $\mu=\rho(\nu)\cdot \mu=\mu\circ \nu$ one gets that $x=\nu(x)$ for $x\in \mathbb J$ generic, that is $\nu={\rm Id}_{\mathbb J}$. Thus $\rho$ is  faithful and  the  result
            is proved.
     \end{proof}

     A different proof of the previous result  is given in \cite{Freudenthal} for the  case when $\mathbb J={\rm Herm}_3(\mathbb O_{\mathbb C})$. The proof therein clearly applies to all twisted cubics over  semi-simple rank 3 Jordan algebras but it is not clear wheter  it can be applied to the general case.  We have included the proof above because we were unaware of any proof of Proposition \ref{P:ConfX=AutX} in the literature, despite this result is certainly well-known to  the experts of this field.

     \begin{rem}   {\rm The proof of Proposition \ref{P:ConfX=AutX} also shows that the subgroup  of projective automorphisms of $X$ fixing two (resp. three) general points of $X$ is isomorphic to the structure group  (resp. to the automorphism group) of the Jordan algebra $\mathbb J$.  This is related to the considerations in   Remark \ref{R:HeuristicRemark}.}
     \end{rem}

    \subsubsection{\bf The radical and the semi-simple part} 
    \label{S:Xradical}
    We  use again  here some notation and construction introduced in the proof of Theorem  \ref{T:XJ}:   $x^+$ and $x^-$ are two general points on $X$  such that 
$X$ is 1-RC by the family $\Sigma_{x^+x^-}$   of twisted cubics included in $X$ passing through $x^+$ and $x^-$.  For $\sigma=\pm$, one defines a rational  map $F^\sigma:V^\sigma \dashrightarrow V^{-\sigma}$  by setting 
$$ F^\sigma(v)=d\alpha_v^\sigma(1:t)/dt \big\lvert_{t=0}
$$
for $v\in V^\sigma$, where $\alpha_v^\sigma: \mathbb P^1\rightarrow X $ is the projective parametrization of  a twisted cubic belonging to  $\Sigma_{x^+x^-}$ such that  $\alpha_v^\sigma(0:1)=x^\sigma$, $\alpha_v^\sigma(1:0)=x^{-\sigma}$ and $d\alpha_v^\sigma(s:1)/ds \lvert_{s=0}=v$.   The
map  $F^\sigma$ is homogeneous  of degree -1 and   $F^\sigma\circ F^{-
\sigma}={\rm Id}_{V^\sigma}$ for every $\sigma=\pm$. Thus the associated projectivization 
$f^\sigma: \mathbb P(V^\sigma)\dashrightarrow \mathbb P(V^{-\sigma})$ of $F^\sigma$  is a Cremona transformation with inverse $f^{-\sigma}: \mathbb P(V^{-\sigma})\dashrightarrow \mathbb P(V^{\sigma})$.  It can be verified that $f^\sigma$ has bidegree $(2,2)$ and that it is nothing but the map $\varphi_{X,x^\sigma}$ defined in  Section  \ref{S:fromXworld} (up to linear equivalence). \smallskip 

Let $R_{f^\sigma}\subset \mathbb P(V^\sigma)$ be the radical of $f^\sigma$ as defined in Section \ref{Ss:radical}. Since $\mathbb P(V^\sigma)$ identifies canonically with 
the projective quotient $T_{x^\sigma } X/\langle x^\sigma\rangle$, one can define the cone $R_{x^\sigma}\subset T_{x^\sigma } X$ over $R_{f^\sigma}$ with vertex $x^\sigma$ (alternatively, $R_{x^\sigma}$ can be defined as the closure of the radical $R_{F^\sigma}\subset V^\sigma$ of $F^\sigma$ in   the natural affine embedding $V^\sigma\subset T_{x^\sigma } X$).   
%Since $\mathbb P^{2n+1}=\langle X\rangle =T_{x^+} X\oplus  T_{x^-} X$, it comes that $R^\sigma$ embeds naturally in the ambiant space $\mathbb P^{2n+1}$ for $\sigma=\pm$.  
By definition, the radical $R_{x^+x^-}$ of $X$ relatively to the pair $(x^+,x^-)$ is the direct sum of $R_{x^+}$ and $R_{x^-}$ in $\mathbb P^{2n+1}$: 
 $$R_{x^+x^-}=R_{x^+}\oplus R_{x^-}\subset T_{x^+} X\oplus  T_{x^-} X=\langle X\rangle=\mathbb P^{2n+1}\, .$$ 

  A straightforward verification proves the following result.
   
\begin{lemma}
The radical $R_{x^+x^-}$ does not depend on the pair $(x^+,x^-)$ but only on $X$.
\end{lemma}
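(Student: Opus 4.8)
The plan is to show that the subspace $R_{x^+x^-}\subset\p^{2n+1}$ admits an intrinsic description in terms of $X$ alone, from which independence of the chosen pair $(x^+,x^-)$ is automatic. The natural candidate is the vertex of a naturally defined cone, mirroring the fact (Section~\ref{Ss:radical}) that the affine radical $R_{F^\sigma}\subset V^\sigma$ is the vertex of the cone $\Sec(\mathcal B_{f^\sigma})=V(N^\sigma)$, equivalently the locus where $d^2N^\sigma$ degenerates, equivalently $\Rad(\mathbb J)$ when one fixes an isotope presentation $V^\sigma_u=\mathbb J$. So the first step is to recall, via the $XJ$-correspondence (Theorem~\ref{T:XJ}), that $X=X_{\mathbb J}$ for a rank $3$ Jordan algebra $\mathbb J$ of dimension $n$, with the affine embedding $\mu_{\mathbb J}(x)=[1:x:x^{\#}:N(x)]$, and to identify both $R_{x^+}$ and $R_{x^-}$ explicitly inside $T_{x^+}X$ and $T_{x^-}X$ in these coordinates: using the homogeneity and the formula $x^{\#\#}=N(x)x$, one computes that for the two distinguished points $0_X=[1:0:0:0]$ and $\infty_X=[0:0:0:1]$ the Cremona maps $f^\pm=\varphi_{X,x^\mp}$ are, up to linear equivalence, the adjoint map $\#_{\mathbb J}$ and its inverse, so that $R_{F^+}$ and $R_{F^-}$ are both copies of $\Rad(\mathbb J)$ sitting in the two tangent spaces.

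The second step is to upgrade this to a coordinate-free statement: I would prove that $R_{x^+x^-}$ is precisely the \emph{vertex of the entry locus cone}, or equivalently the largest linear subspace $\Lambda\subset\p^{2n+1}$ such that $X$ is a cone-like object along $\Lambda$ in the following weak sense — for the general point $x$ of $X$, the second fundamental form $|II_{X,x}|$ is, after the identifications of Section~\ref{S:fromXworld}(a)--(d), the system of quadrics defining $\varphi_{X,x}$, whose base-locus-and-secant geometry has vertex exactly $\Rad$. Concretely: by Proposition~\ref{P:radical} and \eqref{E:RadicalRank3}, $\Rad(\mathbb J)=\{x:d^2N_x=0\}$; translating through $\mu_{\mathbb J}$, the hyperplane section geometry near a general point of $X$ detects this locus independently of which point is used, because $\mathrm{Conf}(\mathbb J)$ (Proposition~\ref{P:ConfX=AutX}) acts transitively on general pairs of points of $X$ and carries $R_{x^+x^-}$ equivariantly. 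Since any two general pairs $(x^+,x^-)$ and $(y^+,y^-)$ are related by an element of $\mathrm{Aut}(X)=\rho(\mathrm{Conf}(\mathbb J))$ that must preserve any intrinsically-defined subvariety, and since $R_{x^+x^-}$ is built only from the projective geometry of $X$ and the pair, equivariance forces $\varphi(R_{x^+x^-})=R_{\varphi(x^+)\varphi(x^-)}$ for all $\varphi\in\mathrm{Aut}(X)$; combined with the cone/vertex characterization this pins $R_{x^+x^-}$ down to a single subspace.

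Thus the concrete chain of reductions I would carry out is: (i) reduce to $X=X_{\mathbb J}$; (ii) compute $R_{0_X}\oplus R_{\infty_X}=\mu_{\mathbb J}$-image of $\Rad(\mathbb J)\oplus\Rad(\mathbb J)$ together with the base points $[1:0:0:0]$ and $[0:0:0:1]$, i.e. the span $\langle 0_X,\infty_X,\Rad(\mathbb J)\text{ in each factor}\rangle$, and observe this equals the singular locus direction of $X_{\mathbb J}$ — indeed $X_{\mathbb J}$ is known to be a cone with vertex $\p(\{0\}\oplus\Rad\oplus\Rad\oplus\{0\})$-type locus when $\Rad\neq 0$, which is an intrinsic subvariety; (iii) invoke transitivity of $\mathrm{Conf}(\mathbb J)$ on general pairs (Proposition~\ref{P:ConfX=AutX} / \cite[Proposition 4.7]{PR}) to move any general pair to $(0_X,\infty_X)$, so $R_{x^+x^-}$ is the image of the fixed subspace under an automorphism; (iv) since that fixed subspace is $\mathrm{Aut}(X)$-invariant (being the vertex, or the singular locus, of $X$ — an intrinsic notion), its image under the automorphism is itself, giving $R_{x^+x^-}=R_{y^+y^-}$ for all general pairs, hence $R_{x^+x^-}$ depends only on $X$.

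The main obstacle is step (ii)/(iv): one must verify that the particular linear space $R_{x^+}\oplus R_{x^-}$ really is \emph{globally} intrinsic to $X$ and not merely intrinsic-relative-to-the-pair. The cleanest way is to check directly that $R_{x^+}\oplus R_{x^-}$ coincides with the vertex of the cone $X_{\mathbb J}$ (when $\Rad(\mathbb J)\neq 0$; when $\Rad(\mathbb J)=0$ both sides are empty), using the explicit formula for $\mu_{\mathbb J}$ and the identity $(x+r)^{\#}=x^{\#}+x\#r$ with $x\#r$ and $r^{\#}$ vanishing appropriately for $r\in\Rad(\mathbb J)$ — a short computation showing $\mu_{\mathbb J}(x+r)$ and $\mu_{\mathbb J}(x)$ differ by a point of the fixed subspace $\p(\{0\}\oplus\Rad\oplus\Rad\oplus\{0\})$, so that subspace lies in every tangent-space-spanned radical and, by dimension count using $\dim R_{f^\sigma}=\dim\Rad(\mathbb J)$, equals $R_{x^+x^-}$. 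Once this identification with the vertex of the cone $X$ is in hand, independence of the pair is immediate, since the vertex of a cone is manifestly a projective invariant of $X$. This is exactly the "straightforward verification" the statement alludes to; I would spell out only the $\mu_{\mathbb J}$-computation and leave the rest to the reader.
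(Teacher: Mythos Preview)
Your overall strategy — reduce to $X=X_{\mathbb J}$, compute $R_{0_X\infty_X}$ explicitly as $\Lambda=\mathbb P(\{0\}\oplus R\oplus R\oplus\{0\})$ with $R=\Rad(\mathbb J)$, then use transitivity of $\mathrm{Conf}(\mathbb J)=\mathrm{Aut}(X)$ on general pairs together with $\mathrm{Aut}(X)$-invariance of $\Lambda$ — is exactly the shape of the ``straightforward verification'' the paper has in mind. The problem is your justification of the invariance step.

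You claim that $X_{\mathbb J}$ is a cone with vertex $\Lambda$, or alternatively that $\Lambda$ is the singular locus of $X$. Both are false. The variety $X$ cannot be a cone with nonempty vertex: since $T_{x^+}X\oplus T_{x^-}X=\mathbb P^{2n+1}$ for general $x^+,x^-$, the two tangent spaces are disjoint, so no fixed linear space lies in every $T_xX$. As for the singular locus, Example~\ref{Ex:XH3sextonions} shows that $\mathrm{Sing}(X)$ is a $10$-dimensional quadric whose \emph{span} is the $11$-dimensional $R_X$ — so $\mathrm{Sing}(X)\subsetneq R_X$ in general. Your $\mu_{\mathbb J}$-computation $(x+r)^\#-x^\#\in R$, $N(x+r)=N(x)$ does show that projection from $\Lambda$ sends $X$ onto $X_{\ss}$ with linear fibers, but this ``cone-like'' behaviour is strictly weaker than $X$ being a cone and does not by itself make $\Lambda$ intrinsic.

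The fix is easy and keeps your outline intact: verify directly, using the explicit formulas for $\rho(j)$, $\rho(\gamma)$ and $\rho(t_w)$ given just before Proposition~\ref{P:ConfX=AutX}, that $\Lambda$ is $\mathrm{Conf}(\mathbb J)$-stable. For $\rho(j)$ and $\rho(\gamma)$ this is immediate (the latter because $\mathrm{Str}(\mathbb J)$ preserves $R$). For $\rho(t_w)$ acting on $[0:x:y:0]$ with $x,y\in R$, one gets $[0:x:y+x\#w:T(y,w)+T(x,w^\#)]$; now $x\#w\in R$ since $R$ is an ideal, and $T(R,\mathbb J)=0$ by \eqref{E:RadicalRank3}, so the image stays in $\Lambda$. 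This is the ``straightforward verification''. (The paper then gives, in Proposition~\ref{P:DefGeomRadX}, the intrinsic description you were reaching for: $\Lambda$ is the vertex of the \emph{tangent variety} $\tau(X)$, a quartic cone — not of $X$ itself.)
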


We can thus define the {\it radical of $X$} as the projective subspace $R_X=R_{x^+x^-}\subset \mathbb P^{2n+1}$ for any generic pair $(x^+,x^-)$ of elements of $X$.  If  $r$ is the dimension of the radical of $\mathbb J$ then $R_X$ is a projective subspace of dimension $2r-1 $ in $\mathbb P^{2n+1}$ that is projectively attached to $X$, that is one has  $\varphi(R_X)=R_X$ for every $\varphi\in {\rm Aut}(X)$.    
    \smallskip 
  
   The preceding definition of the radical of $X$ makes quite explicit the link with the corresponding notion in the $C$-world (hence in the $J$-world). Notwithstanding  we think it is interesting to provide a purely projective definition of $R_X$.  To this end, we
   first remark that since two generic projective tangent spaces of $X$ are in direct sum, $X\subset \p^{2n+1}$ has the secant variety $\sigma(X)$ filling the whole space by Terracini Lemma, {\it i.e.} $\sigma(X)=\p^{2n+1}$. Moreover $X$  is also tangentially non-degenerate, {\it i.e.}  the tangent variety $\tau(X)\subset \mathbb P^{2n+1}$ of $X$,  defined as the closure   of the union of the lines tangent to the smooth locus of $X$, is a hypersurface in $\mathbb P^{2n+1}$.

    \begin{lemma} The tangent variety $\tau(X)$ is the hypersurface in $\mathbb P^{2n+1}$ cut out by the irreducible quartic form 
        $$Q\big(\alpha,x,y,\beta\big)=T(x^\#,y^\#)-\beta N(x)-\alpha N(y)-\frac{1}{4}\big( T(x,y)-\alpha\beta \big)^2.
    $$
    \end{lemma}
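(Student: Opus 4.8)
The plan is to parametrize $\tau(X)$ explicitly and then to recognize $Q$ as its (irreducible) equation. Write $X=X_{\mathbb J}$, so $X$ is the closure of the image of $\mu=\mu_{\mathbb J}\colon x\mapsto[1:x:x^{\#}:N(x)]$. Using the two basic formulas $dN_{x_0}(v)=T(x_0^{\#},v)$ and $d(x^{\#})_{x_0}(v)=x_0\#v$, the differential of $\mu$ at a point $x_0$ carries $v$ to $(0,v,x_0\#v,T(x_0^{\#},v))$; hence the embedded projective tangent space to $X$ at the general point $\mu(x_0)$ is the linear span of $\mu(x_0)$ together with these vectors, and $\tau(X)$ is the Zariski closure of the image of the rational map
\[
(x_0,v)\ \longmapsto\ \big[\,1:\,x_0+v:\,x_0^{\#}+x_0\#v:\,N(x_0)+T(x_0^{\#},v)\,\big].
\]
In particular $\tau(X)$ is irreducible, and by the tangential non-degeneracy recorded above it is a hypersurface.

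Next I would prove the inclusion $\tau(X)\subseteq V(Q)$. The quickest route goes through the conformal group. From the explicit formulas for $\rho\colon\Conf(\mathbb J)\to PGL(Z_2(\mathbb J))$ one checks that $Q$ is a semi-invariant: $Q\circ\rho(j)=Q$ is immediate from the symmetry $(\alpha,x,y,\beta)\leftrightarrow(\beta,y,x,\alpha)$ of $Q$; $Q\circ\rho(\gamma)=\eta_\gamma^{2}\,Q$ follows from the structure-group identities $(\gamma x)^{\#}=\gamma^{\#}(x^{\#})$, $N(\gamma x)=\eta_\gamma N(x)$ and $T(\gamma x,\gamma^{\#}y)=\eta_\gamma T(x,y)$; and $Q\circ\rho(t_w)=Q$ is a direct computation — this is the classical invariance of the Freudenthal quartic under the translations of $\Conf(\mathbb J)$, and it is exactly here that the polarizations of the adjoint identity $(x^{\#})^{\#}=N(x)x$ are used. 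Hence $V(Q)$ is a closed $\Conf(\mathbb J)=\operatorname{Aut}(X)$-invariant subset (Proposition~\ref{P:ConfX=AutX}). Now $\Conf(\mathbb J)$ acts on $X$ with a dense orbit of tangent lines: it is transitive on a dense subset of $X$ (indeed on generic triples, \cite[Proposition 4.7]{PR}), and the stabilizer of $0_X=[1:0:0:0]$ contains $\Str(\mathbb J)$ acting on $\mathbb P(T_{0_X}X)=\mathbb P(\mathbb J)$ with dense orbit $\{N\neq 0\}$ (the $\Str(\mathbb J)$-orbit of $e$ is dense, since $U_a(e)=a^2$ for every invertible $a$ and the squaring map is dominant). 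Since $d\mu_0(e)=(0,e,0,0)$, the tangent line to $X$ at $0_X$ in the direction $[e]$ is $\{[s:te:0:0]\}$, on which every monomial of $Q$ visibly vanishes; therefore $V(Q)$ contains the closure of the orbit of this line, which is $\tau(X)$. (Alternatively one substitutes the parametrization above directly into $Q$ and checks it vanishes identically, using $(x^{\#})^{\#}=N(x)x$, $N(x^{\#})=N(x)^2$, $T(x,x^{\#})=3N(x)$, $x\#x=2x^{\#}$, the full symmetry of $(a,b,c)\mapsto T(a\#b,c)$ and the polarizations of the adjoint identity; the coefficient of $v^{0}$ is $Q(\mu(x_0))$, which already vanishes, and the remaining coefficients collapse the same way.)

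It remains to prove that $Q$ is irreducible, for then $\tau(X)$ — an irreducible hypersurface contained in the irreducible hypersurface $V(Q)$ — coincides with $V(Q)$, and the reduced equation of $\tau(X)$ equals $Q$ up to a scalar. Here I would use that $N$, the trace form $T$ and the adjoint all descend modulo the radical (Proposition~\ref{P:radical} and the facts recalled in \S\ref{S:JCinAction}), so that $Q=\pi^{*}Q_{\mathbb J_{\ss}}$ is the pullback, under the surjective linear projection $\pi\colon Z_2(\mathbb J)\to Z_2(\mathbb J_{\ss})$, of the analogous quartic attached to the induced cubic-norm structure on the semi-simple part $\mathbb J_{\ss}=\mathbb J/\Rad(\mathbb J)$. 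Since the pullback of an irreducible form by a surjective linear map is irreducible, the question reduces to the irreducibility of $Q_{\mathbb J_{\ss}}$, which holds in each of the finitely many cases: it is classical (the Freudenthal quartic, equivalently the equation of the dual of the minuscule variety) when $\mathbb J_{\ss}$ is semi-simple of rank $3$, and it is a short explicit computation in the remaining low-dimensional cases. This completes the proof.

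I expect the genuine difficulty to lie in the identity $Q\circ\rho(t_w)=Q$, equivalently in verifying $Q\big(\mu_{\mathbb J}(x_0)+d\mu_{x_0}(v)\big)\equiv 0$ directly: this is where the rank~$3$ Jordan identities, in fully polarized form, must be organized carefully, and it is the only step that is not essentially formal. A secondary, milder point is the irreducibility of $Q$, which — via the reduction to $\mathbb J_{\ss}$ — amounts to finitely many direct verifications beyond the classical semi-simple rank $3$ case.
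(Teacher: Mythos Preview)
Your argument is correct, and your main step --- reducing the inclusion $\tau(X)\subset V(Q)$ to a single tangent line via the semi-invariance of $Q$ under $\rho(\Conf(\mathbb J))$ --- is exactly the paper's approach (the paper cites Freudenthal and Clerc for this invariance rather than redoing it, and checks vanishing on a tangent line at $e_X$ rather than $0_X$, but the idea is identical). The genuine difference is in how you close the argument. You prove irreducibility of $Q$ separately, by pushing $Q$ down to $Q_{\mathbb J_{\ss}}$ along the radical projection and then invoking the classical Freudenthal quartic in the semi-simple rank~$3$ case plus explicit checks in the remaining low-rank cases ($\mathbb J_{\ss}$ of rank $1$ or $2$ with the induced cubic-norm data). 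The paper instead shows directly that $\deg\tau(X)\geq 4$: a cubic singular along $X$ would contain every secant line (intersection number $\geq 4$), forcing $\sigma(X)\subset\tau(X)$, impossible since $\sigma(X)=\mathbb P^{2n+1}$; and degree $2$ would make $X\subset\Sing(\tau(X))$ degenerate. Once $\deg\tau(X)\geq 4$ and $\tau(X)\subset V(Q)$ with $V(Q)$ quartic, the equality $\tau(X)=V(Q)$ and the irreducibility of $Q$ follow simultaneously, with no case analysis and no appeal to the radical or to the classification of semi-simple parts. Your route works and has the virtue of connecting to the structure theory developed later in the paper, but the degree bound is the cleaner device here; it is worth adding to your toolbox. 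One small remark: you do not actually need Proposition~\ref{P:ConfX=AutX} (which comes later) --- the semi-invariance of $Q$ under $\rho(\Conf(\mathbb J))$, which you establish directly, already makes $V(Q)$ invariant under that group, and that is all you use.
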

    \begin{proof}  Since $\tau(X)$ is irreducible and singular along $X$ and since $\sigma(X)=\p^{2n+1}$, we have 
    $\deg(\tau(X))\geq 4$.  Indeed, if $\deg(\tau(X))=2$, then $X\subset\p^{2n+1}$ would be degenerated  being contained in $\Sing(\tau(X))$. If $\deg(\tau(X))=3$, then the secant variety of $X$ would be contained in $\tau(X)$
    because $\tau(X)$ is singular along $X$.
    Since $V(Q)\subset \mathbb P^{2n+1}$ is a quartic hypersurface to prove that $Q$ is irreducible and that $\tau(X)=V(Q)$ it will be sufficient  to show that $\tau(X)\subset V(Q)$. 
    
   The quartic form  $Q$ is invariant for the action of the conformal group of $X$  on $\mathbb P^{2n+1}$ (the proofs given in \cite{Freudenthal} or in  \cite[\S7]{clerc} concern a priori only the semi-simple cases but can be applied in full generality).  Since the orbit of $0_{X}\in X$ under the action of 
    ${\rm Aut}(X)$  is Zariski-open in $X$, it is  enough to prove that a line in $\mathbb P^{2n+1}$ tangent to $X$ at $0_{X}$ is included in $V(Q)$.  A point of such a line has  homogeneous coordinates $p_v=(1,e+v, e+e\# v,1+T(v))$ with $v\in \mathbb J$. 
A straightforward (but a bit lenght) computation implies  that $Q(p_v)=0$ for every $v$, proving the result.
\end{proof}

    \begin{prop}
    \label{P:DefGeomRadX}
    The tangent hypersurface $\tau(X)$ is a quartic cone of vertex $R_X$.
    \end{prop}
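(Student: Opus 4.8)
The plan is to combine the explicit description of the radical in the $C$-world with the explicit quartic equation for $\tau(X)$. Recall that by the previous lemma $\tau(X)=V(Q)$ with
$$
Q(\alpha,x,y,\beta)=T(x^\#,y^\#)-\beta N(x)-\alpha N(y)-\tfrac14\bigl(T(x,y)-\alpha\beta\bigr)^2,
$$
and that $R_X=R_{x^+x^-}=R_{x^+}\oplus R_{x^-}$, where (after the identifications of \S\ref{S:Xradical}) the two pieces are the affine cones over the radical of the Cremona transformation $\varphi_{X,x^\pm}$, i.e.\ the affine cones over $\Rad(\mathbb J)$ inside $V^\pm=T_{X,x^\pm}$. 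Concretely, placing $x^+=0_X$ and $x^-=\infty_X$ as in the proof of Proposition \ref{P:ConfX=AutX}, one has $V^+\simeq\mathbb J$ sitting in the $x$-coordinate slot and $V^-\simeq\mathbb J$ in the $y$-coordinate slot, and $R_X$ is exactly the linear span $\{[0:r:s:0]\mid r,s\in\Rad(\mathbb J)\}\subset\mathbb P Z_2(\mathbb J)$, a projective subspace of dimension $2r-1$ with $r=\dim\Rad(\mathbb J)$. So the statement to prove is precisely: $V(Q)$ is a cone with vertex this subspace.

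First I would reduce ``cone with vertex $R_X$'' to the two statements (i) $R_X\subset V(Q)$, and (ii) $V(Q)$ is invariant under every translation in the $r$- and $s$-directions by elements of $R_X$; together these give that $V(Q)$ is a union of the linear spaces $p\oplus R_X$, which (since $R_X$ is a linear subspace) is the definition of a cone with vertex containing $R_X$. That the vertex is exactly $R_X$ and not larger follows because $\tau(X)$ is irreducible of degree $4$, hence its singular locus, which contains $X$, cannot be too large; alternatively it follows from the fact proved earlier that $R_X$ is the maximal radical ideal, i.e.\ any larger linear subspace along which $Q$ is constant would enlarge the radical of $\varphi_{X,x^\pm}$, contradicting the definition of the radical as the set of points of multiplicity $3$ of the secant cubic. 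I would phrase this last point using Proposition \ref{P:radical} and the identity \eqref{E:RadicalRank3}: $\Rad(\mathbb J)=\{x\mid d^2N_x=0\}$ is exactly the vertex of the cone $V(N)$.

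The computational heart is verifying (i) and (ii), and here the key inputs are the standard identities for the adjoint and norm in a rank $3$ Jordan algebra restricted to the radical. From \eqref{E:RadicalRank3} one has, for $r\in\Rad(\mathbb J)$: $N(r)=0$, $T(r,\mathbb J)=0$, and $T(r^\#,\mathbb J)=0$; moreover linearising and using $x\#y=d\#_x(y)$ together with the fact that $\Rad(\mathbb J)$ is an ideal, one gets that $r\#z\in\Rad(\mathbb J)$ for all $z\in\mathbb J$, and $T(x^\#,r^\#)=0$, $T(x\#r,\,\cdot\,)$-type terms vanish, etc. Now plug a point of $R_X$, say $[0:r:s:0]$, into $Q$: every one of the four terms $T(r^\#,s^\#)$, $0\cdot N(r)$, $0\cdot N(s)$, and $\tfrac14 T(r,s)^2$ vanishes because $T(r,s)=T(r,\mathbb J)\ni$ contains it and equals $0$, and $T(r^\#,s^\#)=0$ since $s^\#\in\Rad(\mathbb J)$ and $T(r^\#,\Rad(\mathbb J))\subset T(r^\#,\mathbb J)=0$; this gives (i). For (ii) one computes $Q(\alpha,x+\alpha' r,\, y+\beta' s,\,\beta)$ for $[0:r:s:0]\in R_X$ — actually the cleanest route is to use the conformal group: the translations $t_w$ with $w\in\Rad(\mathbb J)$ act on $\mathbb P Z_2(\mathbb J)$ by $\rho(t_w)$, they preserve $X$ hence preserve $\tau(X)=V(Q)$, and a direct check shows that modulo the linear span $R_X$ these translations move $[\alpha:x:y:\beta]$ to $[\alpha:x+\alpha w:\ast:\ast]$ with all extra terms (namely $x\#w$, $\alpha w^\#$, $T(y,w)$, $T(x,w^\#)$, $\alpha N(w)$) lying in $\Rad(\mathbb J)$ by the identities above; hence $V(Q)$ is stable under adding arbitrary elements of $R_X$ to a point, which is (ii). I expect step (ii) — pinning down that the ``extra'' terms produced by a radical translation all land in $R_X$, so that $\rho(t_w)$ acts trivially modulo $R_X$ — to be the only place requiring care, and it is exactly the $X$-incarnation of the well-known Jordan-theoretic fact $(x+r)^\#-x^\#\in\Rad(\mathbb J)$ recorded in \S\ref{S:JCinAction}. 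I would therefore present the argument as: reduce to (i)+(ii); prove (i) by the radical identities; prove (ii) by invoking $\rho(t_w)$ for $w\in\Rad(\mathbb J)$ together with $(x+r)^\#-x^\#\in\Rad(\mathbb J)$; and conclude the vertex is exactly $R_X$ from irreducibility of $\tau(X)$ and maximality of the radical.
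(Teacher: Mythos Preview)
Your argument for the inclusion $R_X\subset$ vertex of $\tau(X)$ is sound in substance: once you have the identities $T(R,\mathbb J)=0$, $N(x+r)=N(x)$ and $(x+r)^\#-x^\#\in R$ for $r\in R=\Rad(\mathbb J)$, a direct expansion shows $Q(\alpha,x+r,y+s,\beta)=Q(\alpha,x,y,\beta)$ for all $r,s\in R$, and this is exactly what the paper does in one line. Your proposed ``cleanest route'' through the conformal group, however, does not work as written: knowing that $\rho(t_w)$ preserves $V(Q)$ and that $\rho(t_w)(p)-p$ lands in the affine cone over $R_X$ only tells you that $Q$ is invariant under each \emph{specific} linear map $\rho(t_w)$, not under arbitrary translations by elements of $R_X$; the $y$-shift produced by $\rho(t_w)$ depends on $x$ and $\alpha$, not just on $w$. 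One can repair this by combining $\rho(t_w)$ with $\rho(jt_{w'}j)$ and a density argument on the locus $\alpha\beta\neq 0$, but the direct expansion is much shorter.

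The genuine gap is the reverse inclusion. Neither of your two suggestions works. Argument (a) fails because an irreducible quartic hypersurface can have a vertex of arbitrarily large dimension; irreducibility alone gives no bound. Argument (b) presupposes the very link you need to establish: you assert that a larger vertex of $V(Q)\subset\mathbb P^{2n+1}$ would force a larger vertex of $V(N)\subset\mathbb P(\mathbb J)$, but a priori a point of the vertex of $V(Q)$ could have nonzero $\alpha$- or $\beta$-component, and even when those vanish there is no obvious reason its $x$-component must lie in $\{d^2N=0\}$. The paper closes this by a short explicit computation of third partials of $Q$: for instance $\partial_\alpha\partial_\beta^2 Q=-\alpha$ and $\partial_\alpha^2\partial_\beta Q=-\beta$ force $\alpha=\beta=0$, and then $\partial_\beta\partial_{x_i}\partial_{x_j}Q=-\partial_{x_i}\partial_{x_j}N(x)$ forces $d^2N_x=0$, i.e.\ $x\in R$ by \eqref{E:RadicalRank3}; symmetrically for $y$. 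This computation is the missing step in your proof.
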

    \begin{proof}
    It follows from (\ref{E:RadicalRank3}) that for any $r_x,r_y\in R={\rm Rad}(\mathbb J)$, one has
   $Q(\alpha,x+r_x,y+r_y,\beta)
    =Q(\omega)$ for every $\omega=(\alpha,x,y,\beta)\in Z_2(\mathbb J)$. This proves that $R_X$ is included in the vertex $V(d^3 Q)$ of $V(Q)=\tau(X)$. 
    
    Conversely,  let $(\partial_{x_1},\ldots,\partial_{x_n})$ (resp. $(\partial_{y_1},\ldots,\partial_{y_n})$) be the  system of partial derivatives naturally associated to  a system of linear coordinates on the first  (resp. on the second) 
    $\mathbb J$-summand of $Z_2(\mathbb J)$. The relations 
    $\partial_\alpha^2\partial_\beta Q(\omega)=\partial_\alpha\partial^2_\beta Q(\omega)=0$ imply that $\alpha=\beta=0$.  The set of relations   $\partial_{\alpha}\partial^2_{x_ix_j}Q(\omega)=-\partial_{x_i}\partial_{x_j}N(x)=0$, $i,j=1,\ldots,n$, can be summarized by $d^2N_x=0$, that is $x\in R= {\rm Rad}(\mathbb J)$ according to (\ref{E:RadicalRank3}). 
    Arguing similarly for $y$, one obtains that  $d^3Q_{\omega}=0$ implies that $
    \omega=( 0,x,y,0)$ with $x,y\in R$. This proves that the vertex $V(d^3Q)$ of $\tau(X)$ is included in $R_X$ and finishes the proof.      
    \end{proof}

In what follows, let $\mathbb J=\mathbb J_{\ss}\oplus R$ be the decomposition 
given in point {\it (2)} of Theorem  \ref{T:AlbertPenico}, where the embedding 
of  $\mathbb J_{\ss}=\mathbb J/R \hookrightarrow \mathbb J$  has been fixed once for all (hence is not indicated to simplify).  A straightforward verification gives that $R_X$ is nothing but the projectivization $\mathbb P(0\oplus R\oplus R\oplus 0)\subset \mathbb PZ_2(\mathbb J)$.  Hence setting $\mu_{\ss}=\mu_{\mathbb J_{\ss}}$  and    $X_{\ss}=X_{\mathbb J_{\ss}}$, we obtain the commutative
diagram
\begin{equation*}
    \xymatrix@R=0.85cm@C=1.7cm{  
\mathbb J  \,  \ar@{^{(}->}[r]^{\mu=\mu_{\mathbb J} } 
\ar@{->}[d]_{\pi_R}
   &\ar@{-->}[d]^{\pi_{R_X}}  X  &  \hspace{-2.4cm}\subset\mathbb PZ_2(\mathbb J)
     \\ 
 \mathbb J_{\ss}   \ar@{^{(}->}[r]^{\mu_{\ss}} 
   & X_{\ss}  &  \hspace{-1.9cm} \subset \mathbb PZ_2(\mathbb J_{\ss}),  
}
 \end{equation*}
   where  $\pi_R$   stands for the canonical linear projection $\mathbb J\rightarrow \mathbb J_{\ss}=\mathbb J/R$ and where $\pi_{R_X}$ denotes the restriction to $X$ of the linear projection $\mathbb PZ_2(\mathbb J)\dashrightarrow \mathbb PZ_2(\mathbb J_{\ss})$ from  the radical $R_X$ of $X$.  Since $\pi_R$ is surjective, this shows that 
$\pi_{R_X}(X)=X_{\ss}$.   By definition, $X_{\ss}$ is the {\it semi-simple  part} of $X$.  
\smallskip 

We have the following result, based on the classification of smooth varieties $X\in \overline{\boldsymbol{X}}(3,3)$.

\begin{prop} The semi-simple part  $X_{\ss}$ of $X$ belongs to $\overline{\boldsymbol{X}}(3,3)$ and is smooth. Moreover
\begin{enumerate}
\item[-] $r_{\ss}(\mathbb J)=3$ if and only if $X_{{\ss}}\in \boldsymbol{X}(3,3)$ hence 
is one of the varieties of
 the table of Theorem \ref{T:simple}; \smallskip
\item[-] $r_{\ss}(\mathbb J)=2$ if and only if $X_{{\ss}}$ is a scroll
$S_{1\ldots 122}$ of $S_{1\ldots 13}$ (in particular, $\dim(X)>1$); \smallskip  
\item[-] $r_{\ss}(\mathbb J)=1$ if and only if $X_{{\ss}}$ is the twisted cubic $v_3(\mathbb P^1)\subset \mathbb P^3$. \smallskip  
\end{enumerate}
\end{prop}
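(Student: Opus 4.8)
The plan is to establish the statement by transporting information back and forth along the $X$–$J$ correspondence set up earlier, relying on two pillars: the identification (proved in Theorem~\ref{T:XJ}) $X = X_{\mathbb J}$ for a rank $3$ Jordan algebra $\mathbb J$, and the explicit description of the semi-simple part via the commutative diagram immediately preceding the statement, which realizes $X_{\ss} = \pi_{R_X}(X) = X_{\mathbb J_{\ss}}$. First I would observe that $\mathbb J_{\ss}$ is a semi-simple Jordan algebra which has rank $r_{\ss}(\mathbb J) \in \{1,2,3\}$ (since $\rk(\mathbb J) = 3$ and passing to a quotient can only decrease the rank, while a nonzero algebra with unit has rank $\geq 1$). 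Next I would invoke the generalized $XJC$-correspondence of \S\ref{S:generalizedXJC}, which gives a bijection between $\overline{\boldsymbol{\mathcal J\!ordan}}_3^{\,n}/_{\mathrm{isot.}}$ and $\overline{\boldsymbol X}{}^n(3,3)/_{\mathrm{proj.}}$, sending $[\mathbb J_{\ss}]$ to $[X_{\mathbb J_{\ss}}] = [X_{\ss}]$; this already places $X_{\ss}$ in $\overline{\boldsymbol X}(3,3)$.

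\textbf{Smoothness.} For smoothness I would argue: a semi-simple Jordan algebra is a direct product of simple ones, and by Theorem~\ref{T:simple} (equivalence (i)$\Leftrightarrow$(ii)) a twisted cubic over a semi-simple rank $3$ Jordan algebra is smooth. The rank $1$ and rank $2$ cases must be handled by the degenerate version: by the tryptics at the end of \S\ref{S:generalizedXJC}, $\mathbb J_0^1$ corresponds to $v_3(\mathbb P^1) \subset \mathbb P^3$ (smooth), while $\mathbb J_0^n$ corresponds to the scroll $S_{1\dots13}$ and $\widetilde{\mathbb J}_1^n$ corresponds to $S_{1\dots122}$, both of which are smooth rational normal scrolls. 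So in every case $X_{\ss}$ is smooth.

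\textbf{The rank trichotomy.} This is the substantive part, and it reduces to: given that $X_{\ss}$ is smooth and lies in $\overline{\boldsymbol X}(3,3)$, correlate $\rk(\mathbb J_{\ss})$ with which of the three families $X_{\ss}$ falls into. I would do this via the classification of the objects appearing in the generalized $XJC$-diagram together with the table of Theorem~\ref{T:simple}: rank $3$ semi-simple Jordan algebras $\leftrightarrow$ the smooth varieties in $\boldsymbol X(3,3)$ listed in that table (Segre, $LG_3(\mathbb C^6)$, $G_3(\mathbb C^6)$, $OG_6(\mathbb C^{12})$, $E_7$-variety); rank $2$ semi-simple Jordan algebras with a cubic norm $\leftrightarrow$ $S_{1\dots122}$ or $S_{1\dots13}$ (these being exactly the degenerate rank $\leq 3$ normed algebras $\widetilde{\mathbb J}_1^n$, $\mathbb J_0^n$, whose ranks are $2$); rank $1$ $\leftrightarrow$ $\mathbb J_0^1 = \mathbb C$ $\leftrightarrow$ $v_3(\mathbb P^1) \subset \mathbb P^3$. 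Since $r_{\ss}(\mathbb J) = \rk(\mathbb J_{\ss})$ by definition, and since the generalized $XJC$-correspondence matches $[\mathbb J_{\ss}]$ with $[X_{\ss}]$, the three bulleted equivalences follow at once; the parenthetical $\dim(X) > 1$ in the rank $2$ case is automatic since $\mathbb J_0^n$ and $\widetilde{\mathbb J}_1^n$ have dimension $n > 1$.

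\textbf{Main obstacle.} The only real subtlety is making sure the correspondence used is the \emph{generalized} one, since $X_{\ss}$ is genuinely a rational normal scroll (not an element of $\boldsymbol X(3,3)$) precisely when $r_{\ss}(\mathbb J) \leq 2$; one must check that the rank $\leq 2$ semi-simple Jordan algebras with a cubic norm are exhausted by $\mathbb J_0^1$, $\mathbb J_0^n$, and $\widetilde{\mathbb J}_1^n$, i.e.\ that semi-simplicity forces the quadratic form in Example~\ref{EX:jordan}.(\ref{Ex:rank2}) to be either trivial or of rank $1$ in these degenerate cases — but a semi-simple rank $2$ Jordan algebra $\mathbb C\oplus W$ has \emph{nondegenerate} $q$, so in fact the rank $2$ semi-simple algebras appearing here are the ones with a genuine nondegenerate $q$, and the scroll picture must be read off from the corresponding normed-algebra data rather than from semi-simplicity of $\mathbb J_{\ss}$ alone; the bookkeeping of which normed algebra produces $S_{1\dots13}$ versus $S_{1\dots122}$ is the one place where care (though no hard computation) is needed, and it is already settled by the tryptics in \S\ref{S:generalizedXJC}.
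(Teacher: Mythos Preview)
Your approach is essentially the paper's: the paper offers no proof beyond the one-line remark that the result is ``based on the classification of smooth varieties $X\in \overline{\boldsymbol{X}}(3,3)$,'' and your argument via the commutative diagram identifying $X_{\ss}=X_{\mathbb J_{\ss}}$ together with the generalized $XJC$-correspondence of \S\ref{S:generalizedXJC} is exactly how that classification is meant to be invoked.

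Your ``Main obstacle'' paragraph correctly flags the one genuine subtlety but does not quite resolve it. When $r_{\ss}(\mathbb J)=2$, the algebra $\mathbb J_{\ss}$ is semi-simple with \emph{nondegenerate} $q$, and hence is not isomorphic (as a Jordan algebra) to either $\mathbb J_0^n$ or $\widetilde{\mathbb J}_1^n$ for $n>2$ --- those have $q=0$ or $q$ of rank $1$ and are not semi-simple. So the $J$-side of the tryptics cannot be matched directly against $\mathbb J_{\ss}$, and it is not clear that the pair $(\mathbb J_{\ss},\overline N)$ even lies in $\overline{\boldsymbol{\mathcal J\!ordan}}_3^{\,n_{\ss}}$ as defined in \S\ref{S:generalizedXJC}. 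The clean fix, which is what your last sentence gestures at, is to route the argument through the $C$-world instead: by \S\ref{S:JCinAction} the semi-simple part $(\overline F,[\overline N])$ lies in $\overline{\mathbf{Bir}}_{2,2}$, and since $\overline f$ is projectively linear when $r_{\ss}\le 2$, the classification in \S\ref{S:extCworld} forces $(\overline f,[\overline N])$ to be linearly equivalent to $({\rm Id},[\ell^3])$ or $({\rm Id},[\ell^2\ell'])$, which correspond to $S_{1\ldots 13}$ and $S_{1\ldots 122}$ via the tryptics. The point is that linear equivalence of normed Cremona maps is coarser than isotopy of normed Jordan algebras, so non-isomorphic algebras on the $J$-side can collapse to the same class on the $C$- and $X$-sides.
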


\begin{ex}[{\bf{continuation of Example \ref{Ex:Ce/e3}}}]
\label{Ex:RXA}
  {\rm  The radical  of the cubic curve $X_A\subset \mathbb P Z_2(A)=\mathbb P^7$ over $A=
  \mathbb C[\epsilon]/(\epsilon^3)$ is the  3-dimensional linear subspace 
  $R_{X_A}=\{ [0:r:r':0] \, \lvert \, r,r'\in R_A   \}\subset \mathbb P^7$.  The projection from $R_{X_A}$ induces a dominant rational map from $X_A$ onto  the twisted cubic curve $v_3(\mathbb P^1)\subset \mathbb P^3$ which is thus the semi-simple part of $X_A$.}
  \end{ex}

  \begin{ex} 
  \label{Ex:XH3sextonions}
  {\rm Let $\mathbb H_{\mathbb C}$ be  the complexification  of the  (real) algebra $\mathbb H$ of quaternions. Then $\mathbb H_{\mathbb C}$ can (and will) be identified with the complex algebra $M_{{2}}(\mathbb C)$ of $2\times 2$ complex matrices. For $M\in \mathbb H_{\mathbb C}$, let $\overline{M}$ stands for the adjoint $M-T(M){\bf Id}$. By definition, the  algebra of (complex) {\it sextonions} is the vector space $\mathbb S_{\mathbb C}=\mathbb H_{\mathbb C}\oplus M_{2\times 1}(\mathbb C)$ together with the product defined by   $(M,u)\cdot (N,v)=(MN,\overline{M}v+Nu)$ for $M,N\in \mathbb H_{\mathbb C}$ and $u,v\in M_{2\times 1}(\mathbb C)$, see \cite{landsbergmanivel2,westbury}.
This algebra can be embedded in the complexification $\mathbb O_{\mathbb C}$ of the octonions so that it  is alternative and it has an involution given  by $\overline{(M,u)}=(\overline{M},-u)$ for $(M,u)\in \mathbb S_{\mathbb C}$.  One  then defines the algebra ${\rm Herm}_3(\mathbb S_{\mathbb C})$ (see Example \ref{EX:jordan}.(\ref{L:Herm3})), which is  a rank 3 Jordan algebra of dimension 21. 

The cubic curve  $X=X_{{\rm Herm}_3(\mathbb S_{\mathbb C})}\subset \mathbb P^{43}$ over 
${\rm Herm}_3(\mathbb S_{\mathbb C})$ is singular along a smooth 10-dimensional quadric hypersurface, which spans the radical $R_{X}$ of $X$. Thus $\dim(R_X)=11$ (see  \cite[Corollary 8.14]{landsbergmanivel2}).  The semi-simple part of $X_{{\rm Herm}_3(\mathbb S)}$ is the orthogonal grassmannian variety $X_{{\rm Herm}_3(\mathbb H_{\mathbb C})}=OG_6(\mathbb C^{12})\subset \mathbb P^{31}$.}
\end{ex}

  \subsubsection{\bf  Radical ideals and extensions}  
   Let  $\boldsymbol{ I}$ be a proper projective subspace of $\mathbb P^{2n+1}$ and denote by $\pi_{\boldsymbol{ I}}: \mathbb P^{2n+1}\dashrightarrow \mathbb P^m$  the linear projection from ${\boldsymbol{ I}}$.  By definition, ${\boldsymbol{ I}}$ is a {\it radical ideal} for (or of) $X$ if $\boldsymbol{I}$ is included in  the radical $R_X$ of $X$ and if     the restriction of $\pi_{\boldsymbol{ I}}$ to $X$, again denoted by $\pi_{\boldsymbol{ I}}$, is such that $\pi_{\boldsymbol{ I}}(X)=\tilde{X}$ is still  3-RC by twisted cubics and extremal. 
     Since $X_{\ss}\in \overline{\boldsymbol{X}}(3,3)$, 
     $R_X$ itself is a radical ideal for $X$.      
   \smallskip 
   
  Let $I\subset \mathbb J$ be a radical ideal of the Jordan algebra $\mathbb J$.  Then $\overline{\mathbb J}=\mathbb J/I$ is a cubic Jordan algebra hence one can define the associated twisted cubic $X_{\overline{\mathbb J}}\subset \mathbb PZ_2(\overline{\mathbb J})\in\overline{\boldsymbol{X}}(3,3)$.   Then 
 $I\!\! I=\mathbb P(0\oplus I\oplus I\oplus 0)\subset \mathbb PZ_2(\mathbb J)$
  is a radical ideal of $X_{\overline{\mathbb J}}$.  More generally, the image of 
  such a
  $I\!\! I$ by any element of ${\rm Conf}(X)$  is again a radical ideal for $X$.

\begin{prop} 
\label{P:RadicalIdealX}
Any radical ideal  of $X$  comes  from a radical ideal of $\mathbb J$ by the construction presented above.
\end{prop}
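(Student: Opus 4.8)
The plan is to transport the statement to the $J$--world via the explicit model $X=X_{\mathbb J}$ and to read the radical ideal condition on the standard affine parametrization $\mu_{\mathbb J}\colon x\mapsto[1:x:x^\#:N(x)]$. Write $Z_2(\mathbb J)=\mathbb C\oplus\mathbb J\oplus\mathbb J\oplus\mathbb C$ as before, so that $R_X=\mathbb PZ_2(\mathbb J)$ contains $R=\Rad(\mathbb J)$ in the guise $R_X=\mathbb P(0\oplus R\oplus R\oplus 0)$ (the computation recalled just before Proposition \ref{P:RadicalIdealX}). A radical ideal $\boldsymbol I$ of $X$ is by definition a projective subspace $\boldsymbol I\subseteq R_X$, hence $\boldsymbol I=\mathbb P(0\oplus W_0\oplus 0)$ for a linear subspace $W_0\subseteq R\oplus R\subseteq\mathbb J\oplus\mathbb J$. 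Let $p_1,p_2\colon\mathbb J\oplus\mathbb J\to\mathbb J$ be the two projections and set $I_1=p_1(W_0)$, $I_2=p_2(W_0)$, two subspaces of $R$. The heart of the argument is to show that the radical ideal condition forces $I_1=I_2=:I$, that $W_0$ is the graph of a linear isomorphism $\phi\colon I\to I$, and that $I$ is an \emph{ideal} of $\mathbb J$ contained in $R$, i.e.\ a radical ideal of $\mathbb J$ in the sense of this section; granting this, $\boldsymbol I$ will be the image of $I\!\!I=\mathbb P(0\oplus I\oplus I\oplus 0)$ under a suitable conformal transformation, which is exactly the conclusion.

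To obtain the graph description I would argue as follows. By definition of a radical ideal, $\pi_{\boldsymbol I}(X)=\widetilde X$ belongs to $\overline{\boldsymbol X}{}^{\,m}(3,3)$ for some $m$, hence by the ($XJ$-)correspondence it is projectively equivalent to $X_{\widetilde{\mathbb J}}$ for a cubic Jordan algebra $\widetilde{\mathbb J}$ of dimension $m$; comparing dimensions gives $\dim Z_2(\mathbb J)/W_0=2m+2=\dim Z_2(\widetilde{\mathbb J})$, so $\dim W_0=2(n-m)$. Now $\pi_{\boldsymbol I}\circ\mu_{\mathbb J}$ is, up to closure, the map $x\mapsto\big[1:\overline{(x,x^\#)}:N(x)\big]$ valued in $\mathbb P\big(\mathbb C\oplus((\mathbb J\oplus\mathbb J)/W_0)\oplus\mathbb C\big)$, and matching it with the standard parametrization $z\mapsto[1:z:z^\#:\widetilde N(z)]$ of $X_{\widetilde{\mathbb J}}$ pins everything down. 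The cleanest way to perform this matching is to pass to the $C$--world: fix two general points $x^+,x^-$ of $X$, recall from \S\ref{S:Xradical} that $R_{x^+}$ and $R_{x^-}$ are identified with the radicals $R_f$ and $R_g$ of $f=\varphi_{X,x^+}$ and its inverse $g$, and check that $\boldsymbol I\subseteq R_X=R_{x^+}\oplus R_{x^-}$ is a radical ideal of $X$ if and only if the pair of its ``shadows'' in $R_{x^+}$ and $R_{x^-}$ is a radical ideal of $f$ in the sense of \S\ref{S:GenStruQqCremona}; then Proposition \ref{P:Radical-C} (Springer's \S9) identifies such pairs with ideals of $\mathbb J_f\cong\mathbb J$ contained in $\Rad(\mathbb J)$, and the very definition of \emph{corresponding} radical ideals of $f$ and $g$ is precisely that $\boldsymbol I$ is the join of the graph of an isomorphism between a single subspace $I\subseteq R$ and itself. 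Transitivity of $\Conf(\mathbb J)={\rm Aut}(X)$ on generic pairs of points of $X$ (Proposition \ref{P:ConfX=AutX}), together with its preservation of $R_X$, makes this independent of the choice of $(x^+,x^-)$. I expect this step --- ruling out a ``skew'' $W_0\subseteq R\oplus R$ and establishing that $I$ is an ideal rather than merely a subspace --- to be the only genuine obstacle; the rest is bookkeeping.

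Finally, once $I\subseteq R$ is recognized as a radical ideal of $\mathbb J$ and $W_0=\{(w,\phi(w)):w\in I\}$, the isomorphism $\phi$ is forced to be compatible with the restriction of $\#_{\mathbb J}$ to $I$ (only the square of $\#_{\mathbb J}$ enters the incidences defining $W_0$, since $\#_{\mathbb J}$ is quadratic), hence it extends to an element $\gamma\in\Str(\mathbb J)$ inducing the identity on $\mathbb J/I$. One then checks directly, from the formulas for the representation $\rho\colon\Conf(\mathbb J)\to PGL(Z_2(\mathbb J))$, that $\rho(\gamma)$ carries $I\!\!I=\mathbb P(0\oplus I\oplus I\oplus 0)$ onto $\boldsymbol I$. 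Thus $\boldsymbol I=\rho(\gamma)\cdot I\!\!I$ with $I$ a radical ideal of $\mathbb J$, which is exactly to say that $\boldsymbol I$ arises from the construction preceding the Proposition, completing the proof.
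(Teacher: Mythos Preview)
Your argument contains a dimensional inconsistency that undermines the whole structure. You correctly compute $\dim W_0=2(n-m)$, but then assert that $W_0$ is the graph of a linear isomorphism $\phi\colon I\to I$ with $\dim I=n-m$; such a graph has dimension $n-m$, not $2(n-m)$. The construction preceding the Proposition builds $I\!\!I=\mathbb P(0\oplus I\oplus I\oplus 0)$ as the \emph{full} direct sum $I\oplus I$, not a diagonal, and the general radical ideal of $X$ is likewise a direct sum rather than a graph. Your reading of ``corresponding radical ideals'' in the $C$-world is the source of the confusion: the pair $(I_f,I_g)$ consists of two subspaces living in the \emph{distinct} tangent spaces $T_{x^+}X$ and $T_{x^-}X$, and the associated radical ideal of $X$ is their join $\boldsymbol I^+\oplus\boldsymbol I^-\subset T_{x^+}X\oplus T_{x^-}X=\mathbb P^{2n+1}$, not the graph of a map between them. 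Consequently the final step---extending a putative $\phi$ to $\gamma\in\Str(\mathbb J)$ and using $\rho(\gamma)$ to carry $I\!\!I$ onto $\boldsymbol I$---is built on the wrong picture: $\rho(\gamma)$ acts by $[0:x:y:0]\mapsto[0:\gamma(x):\gamma^\#(y):0]$, which preserves the direct-sum decomposition and cannot send $I\oplus I$ to a half-dimensional graph.

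The paper's proof proceeds differently and supplies the geometric content that your sketch only gestures at. Fixing two general points $x^+,x^-$, it considers the differentials $\pi^\sigma=d(\pi_{I\!\!I})_{x^\sigma}\colon V^\sigma\to\tilde V^\sigma$, which are surjective since $\pi_{I\!\!I}\colon X\dashrightarrow\tilde X$ is dominant; hence each kernel $I^\sigma$ has dimension $n-m$. The substantive step is to show that $(\pi^+,\pi^-)$ is a morphism of Jordan pairs. This is done geometrically: because $\tilde X$ is extremal and $3$-RC, $\pi_{I\!\!I}$ must carry the $3$-covering family $\Sigma$ of twisted cubics on $X$ onto that of $\tilde X$, so the normalized parametrizations satisfy $\pi_{I\!\!I}\circ\alpha_v=\alpha_{\pi^\sigma(v)}$, whence $\tilde F^\sigma\circ\pi^\sigma=\pi^{-\sigma}\circ F^\sigma$; differentiating gives the intertwining relation $\tilde P^\sigma_{\pi^\sigma(v)}\circ\pi^{-\sigma}=\pi^\sigma\circ P^\sigma_v$ for the quadratic operators. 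Thus $(I^+,I^-)=\ker(\pi^+,\pi^-)$ is an ideal of the Jordan pair $V$, and since $I^+\oplus I^-\subseteq W_0$ with both sides of dimension $2(n-m)$, equality follows. Your instinct to route through Proposition~\ref{P:Radical-C} is not unreasonable---it is the $C$-avatar of this same Jordan-pair computation---but to make it work you would still need the twisted-cubic argument (or an equivalent) to verify that the ``shadows'' of $\boldsymbol I$ in $T_{x^\pm}X$ really satisfy the differential conditions $df(E_f)\subset E_g$ and $dg(E_g)\subset E_f$ of that Proposition, and you would need to replace the graph description by the correct direct-sum one.
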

\begin{proof}
 We continue to use the notation introduced above for $X$ and the tilded versions of these will stand for the corresponding notation for $\tilde{X}$. 

Let $I\!\! I\subset \mathbb P^{2n+1}$ be a non-trivial radical ideal for $X$.  Let $x^+$ and $x^-$ be two general points on $X$. 
For $\sigma=\pm$, one denotes by  $\pi^\sigma$  the differential of $\pi_{I\!\! I}$  at $x^\sigma$: since $x^\sigma$ is general, it is a well-defined surjective linear map from $V^\sigma$ onto $\tilde{V}^\sigma$ whose kernel $I^\sigma$ has dimension $i$.  We want to prove that $I=(I^+,I^-)$ is an ideal of the Jordan pair $V=(V^+,V^-)$ such that $V/I=(V^+/I^+,V^-/I^-)$ is isomorphic to $\tilde{V}=(\tilde{V}^+,\tilde{V}^-)$ (as Jordan pairs).

Let $\Sigma$ (resp. $\tilde \Sigma$) be the 3-covering family of twisted cubics on $X$ (resp. on $\tilde X$).   Then for $C\in \Sigma$ general, its image $\pi_{I\!\! I}(C)$ by the linear projection $\pi_{I\!\! I}$ is an irreducible rational  curve of degree $\leq 3$ included in $\tilde{X}$. Since the family  $\pi_{I\!\! I}(\Sigma)=\{\pi_{I\!\! I}(C)\}_{C\in \Sigma}$ is also 3-covering, it comes from \cite[Lemme 2.1]{PT} that $\pi_{I\!\! I}(\Sigma)=\tilde{\Sigma}$.  Since $x^+$ and $x^-$ are general points of $X$, 
we deduce  $\pi_{I\!\! I}(\Sigma_{x^+x^-})=\{\pi_{I\!\! I}(C)\, \lvert \, C\in \Sigma_{x^+x^-}\}=\tilde{\Sigma}_{\tilde{x}^+\tilde{x}^-}$.  Let $v\in V^\sigma$ be such that $F^\sigma$ and $\tilde{F}^\sigma$ 
are defined at $v$ and $\tilde{v}=\pi^\sigma(v)$ respectively. Let $\alpha_v: \mathbb P^1\rightarrow X$ be the projective parametrization of the twisted cubic element of 
$\Sigma_{x^+x^-}$ such that $d\alpha_v(s:1)/ds\lvert_{s=0}=v$ (see Section   \ref{S:Xradical}).  Then $\tilde{\alpha}_v=\pi_{I\!\! I}\circ \alpha_v: \mathbb P^1\rightarrow \tilde{X}$ is a projective parametrization of a twisted cubic in $\tilde{X}$ such that $\tilde{\alpha}_v(0)=\tilde{x}^\sigma$, $\tilde{\alpha}_v(\infty)=\tilde{x}^{-\sigma}$
 and $d\tilde{\alpha}_v(s:1)/ds\lvert_{s=0}=\pi^{\sigma}(v)=\tilde{v}$: with the notation of Section  \ref{S:Xradical}, one has $\tilde{\alpha}_v=\alpha_{\tilde{v}}$.  This implies that 
 $\tilde{F}^\sigma\circ \pi^\sigma=\pi^{-\sigma}\circ F^\sigma$ for $\sigma=\pm$.  Taking total derivatives, one obtains  
 $d(\tilde{F}^\sigma)_{\pi^\sigma(\cdot)} \circ \pi^\sigma=\pi^{-\sigma} \circ dF^\sigma$.
 Combined with the fact that $F^{-\sigma}\circ F^\sigma={\rm Id}_{V^\sigma}$ and 
 $\tilde{F}^{-\sigma}\circ \tilde{F}^\sigma={\rm Id}_{\tilde{V}^\sigma}$, this gives that 
 for  $v\in V^\sigma$ general: 
 \begin{align*} 
\big( d(\tilde{F}^\sigma)_{\pi^{\sigma}(v)}\big)^{-1}\circ \pi^{-\sigma}= & \, 
d( \tilde{F}^{-\sigma})_{\tilde{F}^{\sigma}\circ \pi^{\sigma}(v)} \circ \pi^{-\sigma}  \\
= & \, 
d( \tilde{F}^{-\sigma})_{\pi^{-\sigma}\circ F^\sigma(v)} \circ \pi^{-\sigma} \\
= & \, 
d( \tilde{F}^{-\sigma}\circ \pi^{-\sigma})_{ F^\sigma(v)}   = d(\pi^\sigma\circ F^{-\sigma}) _{ F^\sigma(v)}=\pi^\sigma\circ \big(d(F^\sigma)_v\big)^{-1}.
 \end{align*}

By density, this series of equalities implies that for $\sigma=\pm$, one has
$$
\tilde{P}^\sigma_{\pi^{\sigma}(v )}\circ \pi^{-\sigma}=\pi^\sigma \circ P^\sigma_v
$$ 
for every $v\in V^\sigma$, 
 where $P^\sigma$ and $\tilde{P}^\sigma$ stand for the quadratic operators of the Jordan pairs $V$ and $\tilde{V}$ respectively (see Remark \ref{R:CtoJPair}).  According to  Definition 1.3 in \cite{loos}, this means that $\pi=(\pi^+,\pi^-):V\rightarrow \tilde{V}$ is a surjective morphism  of Jordan pairs.  Consequently, ${\ker}(\pi)=(I^+,I^-)$ is an ideal of $V$ and $V/I\simeq \tilde{V}$.  For $\sigma=\pm$, let $\boldsymbol{I}^\sigma$ be the  closure  of $I^\sigma$ in $V^\sigma\subset T_{x^\sigma} X\subset \mathbb P^{2n+1}$.  We let  the reader verify that the radical ideal $I\!\! I$ from the beginning is  nothing but the direct sum $\boldsymbol{I}^+\oplus \boldsymbol{I}^-\subset T_{x^+} X \oplus T_{x^-} X=\mathbb P^{2n+1}$,  concluding the proof. 
 \end{proof}

If $I\!\! I\subset \mathbb P^{2n+1}$ is a radical ideal for $X$, we will say that $X$ is a  {\it (radical) extension} of $\tilde{X}=\pi_{I\!\! I}(X)\in \overline{\boldsymbol{X}}(3,3)$ by $I\!\! I$. 
 This extension is {\it null} if the generic fiber of $\pi_{I\!\! I}: X\dashrightarrow \tilde{X}$ is a linear subspace in $\mathbb P^{2n+1}$.  It is {\it split} if there exists a linear embedding $\boldsymbol{\iota}: \langle \tilde{X}\rangle =\mathbb P^{2\tilde{n}+1}\hookrightarrow \mathbb P^{2n+1}$ the image of which  is supplementary to $I\!\! I$ and is such that $\pi_{I\!\! I}\circ \sigma$ induces the identity when restricted to $\tilde{X}$.

\begin{prop} The $XJ$-correspondence induces  correspondences between 
null  (respectively  split) extensions in the $J$-world and in the $X$-world.
\end{prop}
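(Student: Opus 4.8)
The plan is to work with the explicit model $X=X_{\mathbb J}$ and to reduce both assertions to the analysis of radical ideals carried out above. By Proposition~\ref{P:RadicalIdealX} and its proof, after applying a suitable element of $\mathrm{Conf}(\mathbb J)=\mathrm{Aut}(X)$ one may assume that the radical ideal $\boldsymbol{I}\subset\mathbb P^{2n+1}$ is $\mathbb PZ_2$-standard, namely $\boldsymbol{I}=\mathbb P(0\oplus I\oplus I\oplus 0)\subset\mathbb PZ_2(\mathbb J)$ for a radical ideal $I$ of $\mathbb J$, that $\tilde X=\pi_{\boldsymbol I}(X)=X_{\overline{\mathbb J}}$ with $\overline{\mathbb J}=\mathbb J/I$, and that $\pi_{\boldsymbol I}\circ\mu_{\mathbb J}$ equals $\mu_{\overline{\mathbb J}}$ precomposed with the quotient $\mathbb J\to\overline{\mathbb J}$. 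Since everything in sight is invariant under isotopy (on the Jordan side) and under the linear equivalence used to put $\boldsymbol I$ in standard form, it suffices to prove the two equivalences in this normalised situation, recalling that on the Jordan side ``$\mathbb J$ is a null extension of $\overline{\mathbb J}$ by $I$'' means $I^2=0$, while ``$\mathbb J$ is a split extension of $\overline{\mathbb J}$ by $I$'' means that the exact sequence $0\to I\to\mathbb J\to\overline{\mathbb J}\to 0$ of Jordan algebras splits.

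For the null case I would compute the generic fibre of $\pi_{\boldsymbol I}:X\dashrightarrow\tilde X$. Over a general $\mu_{\overline{\mathbb J}}(\bar x)\in\tilde X$ this fibre is the closure of $\{\mu_{\mathbb J}(x+r)\mid r\in I\}$: a general point of $X$ lies in $\{\alpha\neq 0\}$, hence equals $\mu_{\mathbb J}(u)$, and $\pi_{\boldsymbol I}(\mu_{\mathbb J}(u))=\mu_{\overline{\mathbb J}}(\bar u)$ forces $\bar u=\bar x$, i.e. $u=x+r$ with $r\in I$ (note $N(x+r)=N(x)$ and $(x+r)^\#\equiv x^\#$ modulo $I$, the latter since $\mathbb J\#I\subset I$ by (\ref{E:UxRank3})). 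In the chart $\{\alpha=1\}$ the fibre is $\{(1,\,x+r,\,x^\#+x\#r+r^\#,\,N(x))\mid r\in I\}$, which is an affine-linear subspace precisely when the quadratic map $r\mapsto r^\#$ is affine on $I$. By (\ref{E:RadicalRank3}) every $r\in R\supseteq I$ satisfies $T(r)=S(r)=0$, so $r^\#=r^2-T(r)r+S(r)e=r^2$ on $I$; thus the fibre is linear if and only if $r^2=0$ for all $r\in I$, i.e., after polarisation, if and only if $I^2=0$. As this is exactly the null condition on the Jordan side, the ``null'' half of the statement follows.

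For the split case one implication is a direct construction: if $\mathbb J=\mathbb J'\oplus I$ with $\mathbb J'$ a subalgebra mapped isomorphically onto $\overline{\mathbb J}$ by the quotient map, then (\ref{E:RadicalRank3}) shows that $T$, $S$ and $N$ are constant on the cosets of $I$, so they descend to $\overline{\mathbb J}$ and their restrictions to $\mathbb J'$ are the corresponding forms of $\overline{\mathbb J}$; hence the adjoint of $\mathbb J$ preserves $\mathbb J'$ and restricts there to the adjoint of $\overline{\mathbb J}$, and the componentwise linear embedding $\boldsymbol{\iota}:\mathbb PZ_2(\overline{\mathbb J})=\mathbb PZ_2(\mathbb J')\hookrightarrow\mathbb PZ_2(\mathbb J)$ attached to $\mathbb J'\hookrightarrow\mathbb J$ is a section of $\pi_{\boldsymbol I}$ with image supplementary to $\boldsymbol I$ and with $\boldsymbol{\iota}(\mu_{\overline{\mathbb J}}(\bar x))=\mu_{\mathbb J}(x')$ for the corresponding $x'\in\mathbb J'$, so $\boldsymbol{\iota}(\tilde X)\subset X$; this exhibits $X$ as a split extension. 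For the converse I would start from a linear section $\boldsymbol{\iota}$ of $\pi_{\boldsymbol I}$ with supplementary image and $\boldsymbol{\iota}(\tilde X)\subset X$, normalise $\boldsymbol{\iota}$ so that it carries the base triple $(0_{\tilde X},\infty_{\tilde X},e_{\tilde X})$ to $(0_X,\infty_X,e_X)$, then use linearity together with the descriptions of $T_{0_X}X$ and $T_{\infty_X}X$ recalled in \S\ref{S:fromJ} to force $\boldsymbol{\iota}$ into the shape $[\alpha:\bar x:\bar y:\beta]\mapsto[\alpha:\lambda(\bar x):\lambda(\bar y):\beta]$ for a single linear section $\lambda$ of $\mathbb J\to\overline{\mathbb J}$ (that the two $\mathbb J$-slots coincide comes from testing at $e_X$ together with the identity $e\#v=T(v)e-v$), and finally impose that $\boldsymbol{\iota}([1:\bar x:\bar x^\#:\overline N(\bar x)])$ lie on $X$, which yields $\lambda(\bar x)^\#=\lambda(\bar x^\#)$ and $N(\lambda(\bar x))=\overline N(\bar x)$ for all $\bar x$; this says precisely that $\mathbb J':=\lambda(\overline{\mathbb J})$ is a subalgebra (it contains $e$, is closed under the adjoint, hence under squaring and, by polarisation, under the product) complementing $I$, so the Jordan sequence splits.

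The delicate point — and, I expect, the main obstacle — is the normalisation step in this last converse: one must check that the conformal transformations used to move the base triple to $(0_X,\infty_X,e_X)$ can be chosen inside the subgroup of $\mathrm{Conf}(\mathbb J)$ stabilising $\boldsymbol I$ and inducing the identity on $\tilde X$, so that the normalised $\boldsymbol{\iota}$ is still a linear section of $\pi_{\boldsymbol I}$ over the same $\tilde X$. Concretely this reduces to the fact that translations by elements of $I$ and the structure transformations of the form $\mathrm{id}_{\mathbb J}$ plus a linear map into $I$ preserve $\mathbb P(0\oplus I\oplus I\oplus 0)$ and act trivially on $\tilde X$, which in turn follows from the ideal property of $I$ and the identities $e\#r=-r$, $r^\#=r^2$ valid for $r\in R$. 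Granting this, each step above is compatible with isotopy of $\mathbb J$ (equivalently with the linear equivalence bringing $\boldsymbol I$ to standard form), so both equivalences descend to the equivalence classes and the proposition follows. An alternative route to the converse, which bypasses the explicit normalisation, would be to argue as in the proof of Proposition~\ref{P:RadicalIdealX} that $\boldsymbol{\iota}$ induces a section of the surjection of Jordan pairs $V\to\tilde V$ attached to $X\to\tilde X$, and then to pass to Jordan algebras by choosing an invertible element.
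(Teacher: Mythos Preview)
Your treatment of the null case is essentially identical to the paper's: both compute the generic fibre as $\{[1:x+i:x^\#+x\#i+i^\#:N(x)]\mid i\in I\}$, observe that $i^\#=i^2$ for $i\in I\subset R$, and conclude that linearity of the fibre is equivalent to $I^2=0$.

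For the converse in the split case, the paper takes precisely the ``alternative route'' you sketch in your final sentence. It chooses two general points $\tilde x^+,\tilde x^-$ on $\tilde X$, sets $x^\pm=\boldsymbol{\iota}(\tilde x^\pm)\in X$, and uses that a linear embedding sends twisted cubics to twisted cubics to deduce that the pair of differentials $(\iota^+,\iota^-)=(d\boldsymbol{\iota}_{\tilde x^+},d\boldsymbol{\iota}_{\tilde x^-})$ is an injective morphism of Jordan pairs $\tilde V\hookrightarrow V$; together with the fact that ${\rm Im}(\boldsymbol{\iota})$ and $\boldsymbol I$ are supplementary, this splits the Jordan-pair extension $I\hookrightarrow V\twoheadrightarrow\tilde V$. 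No normalisation of base points is needed, which is exactly the advantage of this approach over your primary one.

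Your primary approach also goes through, but the steps you call ``delicate'' genuinely are. The identification $\lambda_1=\lambda_2$ does follow from the tangent condition at $e_X$ as you indicate: writing out that $\boldsymbol{\iota}$ must carry the tangent vector $[0:\bar v:\bar e\#\bar v:d\overline N_{\bar e}(\bar v)]$ to a vector of the form $[0:w:e\#w:dN_e(w)]$ and using $e\#w=T(w)e-w$ together with $\lambda_1(\bar e)=\lambda_2(\bar e)=e$ forces $\lambda_1(\bar v)=\lambda_2(\bar v)$. The normalisation itself requires slightly more than you wrote: translations by $I$ handle the fibre over $0_{\tilde X}$, but to adjust the point over $\infty_{\tilde X}$ while fixing $0_X$ one must conjugate by $\rho(j)$ (which does preserve $\boldsymbol I=\mathbb P(0\oplus I\oplus I\oplus 0)$) and then translate again; the third point is then handled by a structure transformation of the form ${\rm Id}+(\text{linear map into }I)$. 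Each of these lies in the subgroup of ${\rm Conf}(\mathbb J)$ acting trivially on $\tilde X$, so the argument closes, but the Jordan-pair route is visibly cleaner.
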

\begin{proof} Let $\pi_{I\!\!I}:X\dashrightarrow \tilde X$ be a (radical) extension in the $X$-world corresponding to a radical extension
$0\rightarrow I \rightarrow {\mathbb J}\rightarrow {\tilde{ \mathbb J}}=\mathbb J/I \rightarrow 0$ in the  $J$-world (we use here Proposition \ref{P:RadicalIdealX}).   
 One can assume that $X=X_{\mathbb J}$  and  $\tilde{X}=X_{\tilde {\mathbb J}}$ 
and that the projection from ${I\!\!I}$ is induced by the linear map $Z_2(\mathbb J)\rightarrow Z_2(\tilde{\mathbb J})$ coming from the canonical projection $\mathbb J\rightarrow \mathbb J/I$. Let us denote by  $\tilde{x}$ the  class in $\tilde{ \mathbb J}$ of an element $x\in 
\mathbb J$. Since $I$ is assumed to be a radical ideal, $x\#i$ and $i^\#$ belong to $I$ for every $x\in \mathbb J$ and $i\in I$. In particular 
the class of $x^\#$ in $\mathbb J/I$ is $\tilde{x}^\#$. 
From this it follows that  $\pi_{I\!\!I}:X_{\mathbb J}\dashrightarrow X_{\mathbb J/I}$ is given  by 
$[1:x:x^\#:N(x)]  \longmapsto [  1:\tilde{x}:\tilde{x}^\#:N(x)   ]$ hence for any $x\in \mathbb J$, the fiber of $\pi_{{I\!\!I}}:X\dashrightarrow \tilde{X}$ over $\tilde{p}_{x}=
[  1:\tilde{x}:\tilde{x}^\#:N(x)   ]\in \tilde{X}$ is   
 $$\pi_{{I\!\!I}}^{-1}(\tilde{p}_x)=\overline{\big\{ [1:x+i: x^\#+x\#i+i^\#:N(x)]\, \big\lvert \, i\in I\, \big\}}\subset \mathbb PZ_2(\mathbb J).$$
   This fiber  is a linear subspace in $\mathbb PZ_2(\mathbb J)$ if and only if  $i^\#=0$ for every $i\in I$. Since  $I$ is radical,  $i^\#=i^2$ for every $i\in I$ so that $i^\#=0$ for every $i\in I$ if and only if $I^2=0$ (remember that any product in $I$ can be expressed as a linear combination of squares). This proves the proposition for the case of null extensions. \smallskip 

We now consider the case of split extensions. Clearly a split extension in the $J$-world yields
 a split extension in the $X$-world. On the contrary, assume that $\boldsymbol{\iota}: \langle \tilde{X}\rangle \hookrightarrow \mathbb P^{2n+1}$ is a splitting of an extension $\pi_{{I\!\!I}}:X\dashrightarrow \tilde{X}$.  If    $\tilde{x}^+$ and $\tilde{x}^-$ are two general points of $\tilde{X}$, then $x^+=\boldsymbol{\iota}(\tilde{x}^+)$ and 
$x^-=\boldsymbol{\iota}(\tilde{x}^-)$ are two points of $X$ for which the construction of the Jordan pair $(V^+,V^-)$ described in the proof of Proposition \ref{P:RadicalIdealX} can be performed.
For $\sigma=\pm$, let ${\iota}^\sigma: \tilde{V}^\sigma\hookrightarrow V^\sigma$ be the differential  at $\tilde{x}^\sigma$ of the restriction of $\boldsymbol{\iota}$ to $\tilde{X}$.  Since $\boldsymbol{\iota}$ is a linear embedding, it sends any twisted cubic included in $\tilde{X}$ onto a twisted cubic in $X$. From this,  one deduces that 
${\iota}=({\iota}^+,{\iota}^-): \tilde{V}\rightarrow V$ is an injective morphism of Jordan pairs. Finally,  from the fact that  ${\rm Im}(\boldsymbol{\iota})$ and $I\!\!I$ are supplementary in $\mathbb PZ_2(\mathbb J)$, one deduces that  $ {\iota}: \tilde{V}\rightarrow V$ gives a splitting of the extension of Jordan pairs $I\hookrightarrow V\stackrel{\pi}{\twoheadrightarrow} \tilde{V}$, concluding
the proof of all the assertions.
\end{proof}

\begin{ex} [{\bf{continuation of Example \ref {Ex:XH3sextonions}}}]
 {\rm  The decomposition $\mathbb S_{\mathbb C}=\mathbb H_{\mathbb C}\oplus \mathbb U_{\mathbb C}$ (with  $\mathbb U_{\mathbb C}=M_{2\times 1}(\mathbb C)$)  induces a decomposition  in direct sum 
 ${\rm Herm}_3(\mathbb S_{\mathbb C})={\rm Herm}_3(\mathbb H_{\mathbb C})\oplus 
 {\rm Alt}_3(\mathbb U_{\mathbb C})$ where ${\rm Alt}_3(\mathbb U_{\mathbb C})$ is the space of antisymmetric $3\times 3$ matrices with coefficients in $\mathbb U_{\mathbb C}$.   One has $\Rad({\rm Herm}_3(\mathbb S_{\mathbb C}))={\rm Alt}_3(\mathbb U_{\mathbb C})$ and ${\rm Herm}_3(\mathbb S_{\mathbb C})$ is a split and  null extension of ${\rm Herm}_3(\mathbb H_{\mathbb C})$ by ${\rm Alt}_3(\mathbb U_{\mathbb C})$.  The geometrical interpretation of this is that 
   $X=X_{{\rm Herm}_3(\mathbb S_{\mathbb C})}\subset \mathbb P^{43}$  is a split and null extension of $X_{{\rm Herm}_3(\mathbb H_{\mathbb C})}=OG_6(\mathbb C^{12})$. In this particular case,  the linear projection $\pi_{R_X}: X\dashrightarrow X_{{\rm Herm}_3(\mathbb H_{\mathbb C})}$   is surjective and any  of its fibers is a linear subspace of dimension $6$ in $\mathbb P^{43}$.}
\end{ex}

 \subsubsection{\bf  The Penico series of $X$}
 We use in this subsection the notation introduced in the proof 
 of Proposition \ref{P:RadicalIdealX}:  $x^+$ and $x^-$ are two general points on $X$, etc.   Let  $I\!\! I\subset R_X$ be a radical ideal associated to the radical ideal $(I^+,I^-)$ of the Jordan pair $V=(V^+,V^-)$.  For $\sigma=\pm$, define $\mathcal P(I^\sigma)$ as $P^\sigma_{I^\sigma}(V^{-\sigma})\subset V^\sigma$. Then $\mathcal P(I\!\! I)=\mathbb P(\mathcal P(I^+)\oplus \mathcal P(I^-))\subset \mathbb P^{2n+1}$ is a radical ideal of $X$.  \smallskip 

We can now define the {\it Penico series} of $X$ as  the decreasing  family  of projective subspaces $\mathbb P^{2n+1}\supset R^{[1]}_X \supset R^{[2]}_X \supset \cdots  \supset  R^{[\ell-1]}_X 
\supset R^{[\ell ]}_X \supset \cdots$ defined inductively by 
$$R_X^{[1]}=R_X \quad \mbox{ and } \quad 
R_X^{[k+1]}=\mathcal P\big(R_X^{[k]}\big) \quad \mbox{ for } k\geq 1.$$

One verifies that the $R^{[k]}_X$'s do not depend on the base points $x^+$ and $x^-$ but only on $X$ and that they are projectively attached to $X$.  It would be interesting to give a purely geometrical  characterization of the Penico series of $X$, in the same spirit of 
the characterization of the radical  of $X$ given in Proposition \ref{P:DefGeomRadX}.

\begin{ex}[{\bf{continuation of Example \ref{Ex:Ce/e3}}}]
\label{Ex:Penico-X}
  {\rm  We have seen  in Example \ref{Ex:RXA} that 
 the radical $R_{X_A}$ of the cubic curve $X_A\subset \mathbb P Z_2(A)=\mathbb P^7$ over $A=
  \mathbb C[\epsilon]/(\epsilon^3)$ is a 3-dimensional projective subspace in $\mathbb P^7$. One verifies easily $R^{[2]}_{X_A}$ is the projective line 
  $\{ [0:\lambda\,   \epsilon^2 :\lambda'\,   \epsilon^2:0]\, \lvert \, [\lambda,\lambda']\in \mathbb P^1 \}\subset R_{X_A}$, whereas  $R^{[\ell]}$ is empty for every $\ell\geq 3$. }
  \end{ex}

 For any $\ell\geq 1$, let $p^{[\ell]}$ be the restriction to $X$ of the linear projection from $R_X^{[\ell]}$ and denote by $X^{[\ell]}$ its  image (note that $X^{[\ell]}=X$ and 
 $p^{[\ell]}={\rm Id}_X$ if $R^{[\ell]}_X$ is empty).    Since  $R_X^{[\ell]}$ is  a radical ideal, 
$X^{[\ell]}$ belongs to $\overline{\boldsymbol{X}}(3,3)$. Moreover, it is not difficult to verify that for $k<\ell$, the linear subspace $p^{[\ell]}(R_X^{[k]})$ is a radical ideal for $X^{[\ell]}$ that is nothing but $R_{X^{[\ell]}}^{[k-\ell]}$.  If one denotes by  $\pi^{[\ell]}$  the restriction to $X^{[\ell]}$ of the linear projection from 
$p^{[\ell]}(R_X^{[\ell-1]})$, there is a commutative diagram 
 \begin{equation*}
    \xymatrix@R=0.7cm@C=1.7cm{  
 X  \ar@{-->}[r]^{p^{[\ell]} }  \ar@{-->}[rd]_{p^{[\ell-1]} }  & X^{[\ell] }
  \ar@{-->}[d]^{\pi^{[\ell]} } 
 \\ 
   &  X^{[\ell-1] }\, , 
 }
 \end{equation*}
where the maps in it are (restrictions of) dominant linear projections 
 sending isomorphically a general twisted cubic curve in a source space
    onto a general twisted cubic curve in the target space. \smallskip 
    
    A good reason  to consider the projections $\pi^{[\ell]}$ is certainly given by the following result,
    whose proof is left to the reader.
    
    \begin{prop}\label{P:linearfiber} For any $\ell\geq 1$, the generic fiber of the rational map 
   $\pi^{[\ell]}:  X^{[\ell]}\dashrightarrow  X^{[\ell-1]}$ is a linear subspace.
       \end{prop}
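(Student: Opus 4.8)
The plan is to work in the $J$-world, using the $XJ$-correspondence (Theorem~\ref{T:XJ}) to reduce the statement to the corresponding fact about Jordan algebras. Write $X=X_{\mathbb J}$ for a rank $3$ Jordan algebra $\mathbb J$, fix the decomposition $\mathbb J=\mathbb J_{\ss}\oplus R$ with $R=\Rad(\mathbb J)$, and recall from \S\ref{S:Xradical} that $R_X^{[\ell]}$ corresponds under $\mu_{\mathbb J}$ to $\mathbb P(0\oplus R^{[\ell]}\oplus R^{[\ell]}\oplus 0)\subset\mathbb P Z_2(\mathbb J)$, where $R^{[\ell]}$ is the Penico series of the ideal $R$. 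By the inductive description of the Penico series of $X^{[\ell]}$ one has $p^{[\ell]}(R_X^{[k]})=R_{X^{[\ell]}}^{[k-\ell]}$, so it suffices to treat the case $\ell=1$ relative to $X^{[\ell]}$, i.e.\ to show that the generic fiber of $\pi_{R^{[1]}_X/R^{[0]}_X}$—the projection of $X$ from $R^{[1]}_X=R_X$ onto $X^{[0]}=X_{\ss}$—is a linear subspace; but more precisely we want the one-step projection $\pi^{[\ell]}:X^{[\ell]}\dashrightarrow X^{[\ell-1]}$, which is the projection from the image of $R_X^{[\ell-1]}$ in $X^{[\ell]}$, i.e.\ projection along the rank $1$ \emph{quotient} ideal $R^{[\ell-1]}/R^{[\ell]}$.

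The key computation is then the following. By Theorem~\ref{T:AlbertPenico}(3), $\mathbb J^{[\ell]}=\mathbb J/R^{[\ell]}$ is a null extension of $\mathbb J^{[\ell-1]}=\mathbb J/R^{[\ell-1]}$, which by definition means that the ideal $\bar R=R^{[\ell-1]}/R^{[\ell]}$ of $\mathbb J^{[\ell]}$ satisfies $\bar R^2=0$. I would run the fiber computation exactly as in the proof of the split/null extension proposition above: after identifying $X^{[\ell]}$ with $X_{\mathbb J^{[\ell]}}$ and $X^{[\ell-1]}$ with $X_{\mathbb J^{[\ell-1]}}$, the map $\pi^{[\ell]}$ sends $[1:x:x^\#:N(x)]$ to $[1:\bar x:\bar x^\#:N(x)]$, where $\bar{\phantom{x}}$ denotes passage to $\mathbb J^{[\ell-1]}$; since $\bar R$ is a radical ideal one has, for $i\in \bar R$ and $x\in\mathbb J^{[\ell]}$, both $x\#i\in\bar R$ and $i^\#=i^2=0$ (every product in $\bar R$ is a linear combination of squares, and squares vanish because $\bar R^2=0$). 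Therefore the fiber over $[1:\bar x:\bar x^\#:N(x)]$ is exactly
\begin{equation*}
\big\{\,[1:x+i:x^\#+x\#i:N(x)]\ \big|\ i\in\bar R\,\big\}^{-},
\end{equation*}
which is the image of the affine-linear map $i\mapsto (1,x+i,x^\#+x\#i,N(x))$ and hence a linear subspace of $\mathbb P Z_2(\mathbb J^{[\ell]})$.

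For the case where $R^{[\ell]}_X$ is empty (so $X^{[\ell]}=X$ already equals $X^{[\ell-1]}$ up to the last nonempty step, or $R^{[\ell-1]}_X$ is itself empty) the statement is vacuous or trivial, so the substance is precisely the null-extension step above. The main obstacle is not conceptual but bookkeeping: one must be careful that the linear projection $\pi^{[\ell]}$ on $X^{[\ell]}$ really is the projection along the $\mathbb J^{[\ell-1]}$-image of $R_X^{[\ell-1]}$, i.e.\ that $p^{[\ell]}(R_X^{[\ell-1]})=R_{X^{[\ell]}}^{[0]}/R_{X^{[\ell]}}^{[?]}$ matches the quotient ideal $R^{[\ell-1]}/R^{[\ell]}$ of $\mathbb J^{[\ell]}$—this is exactly the compatibility $p^{[\ell]}(R_X^{[k]})=R_{X^{[\ell]}}^{[k-\ell]}$ already noted in the text, combined with the $XJ$-correspondence and the fact that the Penico series is isotopy-invariant. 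Once this identification is in place, and using the characterization $I^{[k+1]}=U_I(\mathbb J)$ of the Penico series (which gives $\bar R^2=0$ for $\bar R=R^{[\ell-1]}/R^{[\ell]}$), the display above is immediate and the proposition follows; these routine verifications are what we leave to the reader, as announced.
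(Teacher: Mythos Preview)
Your proof is correct and is precisely the argument the paper intends the reader to supply: since $\mathbb J^{[\ell]}$ is a null extension of $\mathbb J^{[\ell-1]}$ by Theorem~\ref{T:AlbertPenico}(3), the ideal $\bar R=R^{[\ell-1]}/R^{[\ell]}$ satisfies $\bar R^2=0$, and the explicit fiber computation you run is verbatim the one carried out in the proof of the null/split correspondence proposition just above (where it is shown that the fiber is linear if and only if $i^\#=0$ for every $i$ in the ideal, which for a radical ideal is equivalent to $i^2=0$).
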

    
      \subsubsection{\bf  The general structure of twisted cubics over Jordan algebras}  
    
    We are now in position of stating the translation in the $X$-world of Theorem \ref{T:AlbertPenico}.

     \begin{thm} 
 \label{T:structure-X}  Assume that $X\in \boldsymbol{X}(3,3)$ is not semi-simple (or equivalently that $X$ is not smooth).  Then 
 \begin{enumerate}
 \item[(1)] the restriction to $X$ of the linear projection from its radical $R_X$ induces a dominant rational map
 $$
 \pi_{R_X}: X\dashrightarrow {X}_{\ss}
 $$
 over a semi-simple 3-RC variety ${X}_{\ss}\in \boldsymbol{\overline{X}}(3,3)$, the restriction of which to a general twisted cubic $C \subset X$ is an isomorphism onto its image $\pi_{R_X}(C)$, which is then a twisted cubic curve in ${X}_{\ss}$; 
 \medskip 
  \item[(2)] there exists a linear embedding $\sigma: \mathbb PZ_2(\mathbb J_{\ss})\hookrightarrow \mathbb PZ_2({\mathbb J})$ whose image is supplementary to $R_X$  such that  
  $$\sigma\big({X}_{\ss}\big)\subset X
  \qquad \mbox{and}\qquad 
 \pi_{R_X}\circ \sigma\in {\rm Aut}\big(X_{\ss}\big).$$ 
  
  Moreover, $\sigma$ is unique, up to composition to the left by a projective automorphism of $X$;
 \medskip 
  \item[(3)]  the radical $R_X$ is solvable: there exists a positive integer 
  $t$ such that $R_X^{[t]}$ is empty.  Moreover,  $X^{[\ell]}$ is a radical null extension of $X^{[\ell-1]}$ for $\ell=2,\ldots,t$ so that   $X=X^{[t]}$ can be obtained from its semi-simple part $X_{\ss}=X^{[1]}$ by the successive series of null radical extensions represented below
  \begin{equation*}
    \xymatrix@R=0.85cm@C=1.17cm{  
 X  \ar@{-->}[r]^{\pi^{[t]} }  & X^{[t-1]}  \ar@{-->}[r]^{\pi^{[t-1]} } & \cdots   \ar@{-->}[r]^{\pi^{[\ell+1]} } &  X^{[\ell]}  
 \ar@{-->}[r]^{\pi^{[\ell]} } &  X^{[\ell-1]} \ar@{-->}[r]^{\pi^{[\ell-1]} } &  \cdots 
 \ar@{-->}[r]^{\pi^{[3]} }
  & X^{[2]}  \ar@{-->}[r]^{\pi^{[2]} }
& X_{\ss}.
 }
 \end{equation*}
 \end{enumerate}
 \end{thm}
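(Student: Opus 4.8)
The plan is to transport Theorem~\ref{T:AlbertPenico} from the $J$-world to the $X$-world using the dictionary already established, namely the $XJ$-correspondence together with Proposition~\ref{P:RadicalIdealX} (radical ideals of $X$ come from radical ideals of $\mathbb J$) and the identification $R_X=\mathbb P(0\oplus R\oplus R\oplus 0)$, $X_{\ss}=X_{\mathbb J_{\ss}}$ worked out just before the statement. Assertion~(1) is essentially already in hand: we saw that $\pi_{R_X}$ factors the affine embedding through $\mu_{\ss}$, hence $\pi_{R_X}(X)=X_{\ss}\in\overline{\boldsymbol X}(3,3)$; since $R_X$ is a radical ideal, every general twisted cubic $C\subset X$ maps isomorphically onto a twisted cubic $\pi_{R_X}(C)\subset X_{\ss}$ (this is exactly the content of the commutative triangle of linear projections preceding Proposition~\ref{P:linearfiber}, applied with $\ell$ equal to the index where $R_X^{[\ell]}=R_X$, i.e. $\ell=1$). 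So for (1) I would simply assemble these observations.

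For assertion~(2), I would invoke part~(2) of Theorem~\ref{T:AlbertPenico}: pick the Albert--Penico splitting $\mathbb J=\sigma_0(\mathbb J_{\ss})\rtimes R$, with $\sigma_0:\mathbb J_{\ss}\hookrightarrow\mathbb J$ a Jordan-algebra embedding, unique up to a left automorphism of $\mathbb J$. Applying the functor $\mathbb J'\mapsto Z_2(\mathbb J')=\mathbb C\oplus\mathbb J'\oplus\mathbb J'\oplus\mathbb C$ and projectivizing, $\sigma_0$ induces a linear embedding $\sigma:\mathbb PZ_2(\mathbb J_{\ss})\hookrightarrow\mathbb PZ_2(\mathbb J)$ whose image is the span of the points $[1:\sigma_0(a):\sigma_0(a)^\#:N_{\ss}(a)]$; since $\sigma_0$ is an algebra homomorphism and $\sigma_0(a)^\#$ has the same class modulo $R$ as computed in $\mathbb J_{\ss}$, this image is $\sigma(X_{\ss})\subset X_{\mathbb J}=X$, it is supplementary to $R_X=\mathbb P(0\oplus R\oplus R\oplus 0)$, and $\pi_{R_X}\circ\sigma$ is the identity of $X_{\ss}$ (in particular lies in $\operatorname{Aut}(X_{\ss})$). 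Uniqueness of $\sigma$ up to a left projective automorphism of $X$ follows from the uniqueness clause in Theorem~\ref{T:AlbertPenico}(2) together with Proposition~\ref{P:ConfX=AutX}, which identifies $\operatorname{Aut}(X)$ with $\rho(\operatorname{Conf}(\mathbb J))$ and hence lets us realize every automorphism of $\mathbb J$ (a fortiori the ambiguity in $\sigma_0$) as a projective automorphism of $X$.

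For assertion~(3), solvability of $R_X$ is just the translation of solvability of $R=\Rad(\mathbb J)$ (Theorem~\ref{T:AlbertPenico}(3)): under the dictionary $R_X^{[k]}=\mathbb P(0\oplus R^{[k]}\oplus R^{[k]}\oplus 0)$, which I would check by matching the inductive definition $R_X^{[k+1]}=\mathcal P(R_X^{[k]})$ with the Jordan-pair formula $R^{[k+1]}=U_{R^{[k]}}(\mathbb J)$ recalled in~\S\ref{S:Jordan-II}; since $R^{[t]}=0$ for some $t$, we get $R_X^{[t]}=\emptyset$. That $X^{[\ell]}$ is a \emph{null} radical extension of $X^{[\ell-1]}$ for $\ell=2,\dots,t$ is the geometric shadow of the fact that $R^{[\ell-1]}/R^{[\ell]}$ has trivial product (the null-extension remark after~\eqref{E:nullradicalEXT}); concretely one applies the null/split-extension correspondence (Proposition preceding Example~\ref{Ex:RXA}, proved via the fiber formula $\pi_{I\!\!I}^{-1}(\tilde p_x)=\overline{\{[1:x+i:x^\#+x\#i+i^\#:N(x)]\mid i\in I\}}$ being linear iff $I^2=0$) to the radical ideal $I=R^{[\ell-1]}/R^{[\ell]}$ of the quotient algebra $\mathbb J/R^{[\ell]}$, and uses Proposition~\ref{P:linearfiber} to see the fibers of $\pi^{[\ell]}$ are linear. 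The commutative tower of projections is then the image under $\mathbb J'\mapsto X_{\mathbb J'}$ of the tower of quotient maps $\mathbb J/R^{[\ell]}\twoheadrightarrow\mathbb J/R^{[\ell-1]}$. The only genuinely non-routine point, which I would treat with care, is verifying the compatibility $R_X^{[k]}=\mathbb P(0\oplus R^{[k]}\oplus R^{[k]}\oplus 0)$ and the identity $\mathcal P(I^\sigma)=P^\sigma_{I^\sigma}(V^{-\sigma})\leftrightarrow U_I(\mathbb J)$ linking the intrinsic Penico operation $\mathcal P$ on projective subspaces of $R_X$ to Penico's series of the algebra; everything else is a transcription through the already-established equivalences, and I would leave those transcriptions to the reader as the paper does elsewhere.
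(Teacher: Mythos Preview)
Your proposal is correct and follows exactly the strategy the paper adopts: the paper does not give an explicit proof of Theorem~\ref{T:structure-X} at all, but presents it as the direct translation of Theorem~\ref{T:AlbertPenico} into the $X$-world once the dictionary (the identifications $R_X=\mathbb P(0\oplus R\oplus R\oplus 0)$, $X_{\ss}=X_{\mathbb J_{\ss}}$, $R_X^{[k]}\leftrightarrow R^{[k]}$, Propositions~\ref{P:ConfX=AutX} and~\ref{P:RadicalIdealX}, and the null/split-extension correspondence) has been set up in the preceding subsections. Your outline reproduces precisely this transcription, including the one point you flag as needing care---the compatibility of the Penico series on both sides---which the paper likewise treats only implicitly; the only slip is a harmless misreference (the null/split-extension Proposition with the fiber formula comes after Proposition~\ref{P:RadicalIdealX}, not before Example~\ref{Ex:RXA}).
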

    \smallskip 

  \subsubsection{\bf  Null extension and Verra construction}

  It follows from part  {\it (3)}  of  Theorem \ref{T:structure-X}  that the notion of radical null extension  is particularly relevant when dealing with varieties in the class $ {\boldsymbol{X}}(3,3)$. 
 Notwithstanding  the considerations and results of the preceding sections are not fully satisfying from the intrinsic point of view. For instance the  construction of all  radical null extensions of a given $X\in {\boldsymbol{X}}(3,3)$ shows immediately that it is desirable to have 
  an intrinsic  geometric characterization of such objects, the term `intrinsic' meaning here `in term of $X$ alone'.   This section is dedicated to this purpose. 
  \smallskip 
    
    Let us recall that if $X\in {{\boldsymbol{X}}}(3,3)$, then $X\subset\p^{2n+1}$ is a {\it variety with one apparent double point}, briefly an {\it OADP--variety},
meaning that through a general point of $\p^{2n+1}$ there passes a unique secant line to $X$, 
see for example \cite[Corollary 5.4]{PR} for a proof\footnote{The name  `OADP--variety'  comes from the fact that the projection
of $X$ from a general point acquires only one double point as (further) singularities (see also \cite{CR} for relations between twisted cubics over Jordan algebras and OADP-varieties).}. If $\pi:X'\dashrightarrow X$  is a radical null extension in the $X$-world then 
 the general fiber of $\pi$ is a linear subspace.  This shows that  $X'$ is an OADP-variety obtained from $X$ by the so called {\it Verra construction} of new OADP-varieties from a given one. We recall brievely this geometric construction below, 
 referring to \cite[\S 3]{CR} for more details and proofs. \medskip 

Let  $Y\subset \p^{2(n+r)+1}$ be a degenerate OADP--variety of dimension $n$,
which spans a linear space $V$ of dimension $2n+1$. 
Let $\mathcal C_W(Y)$ be the cone over $Y$ with vertex a linear space $W\subset \p^ {2(n+r)+1}$ 
of dimension $2r-1$ in direct sum with $V$.   Assume that $Y' \subset \mathcal C_W(Y)$ is an irreducible non--degenerate  variety of dimension $n+r$, that is not secant defective and 
which intersects the general ruling $\Pi\simeq \p^ {2r}$ of $\mathcal C_W(Y)$ 
along a linear subspace of dimension $r$.
Then the linear projection of $\p^ {2(n+r)+1}$ 
from $W$ onto $V$ restricts to $Y'$ to a dominant map $\pi: Y'\map Y$ having  linear fibers of dimension $r$ that are generically disjoint.
 We shall say that $Y'$ {\it is obtained from $Y$ via Verra's construction} or also that $Y'$ is a {\it Verra variety}. It is not difficult to prove that Verra varieties are OADP--varieties.\smallskip 

Let $Y'$ be a Verra variety as above. Then $\pi^{-1}(y)$ is a linear subspace of dimension $r-1$ of $W$ for $y\in Y$ general. Therefore $y\mapsto \pi^{-1}(y)$ defines a rational map 
$ \gamma_{Y'}: Y\dashrightarrow \mathbb G(r-1,W)=G_{r}(\mathbb C^{2r}) $.  Moreover,  $ \gamma_{Y'}(y_1)$ and $ \gamma_{Y'}(y_2)$ are skew subspaces of $W$ when $y_1$ and $y_2$ are two  general points of $Y$.
Conversely, let  $\mathcal V_Y^r$ be the set of rational maps
$\gamma: 
Y\dashrightarrow \mathbb G(r-1,\mathbb P^{2r-1}) $ satisfying the condition that  
$ \gamma(y_1)$ and $ \gamma(y_2)$ are skew if $y_1$ and $y_2$ are general in $Y$.   
It can be verified that  for any $\gamma\in \mathcal V_Y^r$,  if  $Y_0$ stands for the open subset of $ Y$ on which  $\gamma$ is defined, then 
$$
Y_{\!\gamma}=\overline { \bigcup_{y\in Y_0}\big\langle y,\gamma(y)\big\rangle }\subset \mathbb P^{2(n+r)+1}
$$
is an OADP--variety that is  obtained from $Y$ by   Verra's construction.  
This gives an identification between the set of $(n+r)$-dimensional Verra varieties constructed from $Y$ up to projective equivalence and the quotient of the set $\mathcal V_Y^r$ by a  certain  relation of equivalence that the interested reader 
could make explicit without difficulty. \smallskip 

Since a radical null extension $\pi:X'\dashrightarrow X$ is obtained by Verra's construction from $X$, there exists $\gamma_{X'}\in \mathcal V_X^r$ such that $X'=X_{\gamma_{X'}}$.  Nevertheless one verifies easily that not every $\gamma\in  \mathcal V_X^r$ is such that $X_\gamma$ is a radical null extension of $X$, see also Example \ref{Ex:simplext} below. A necessary and sufficient 
assuring that $X'\in {{\boldsymbol{X}}}(3,3)$ is given by the following result:

\begin{thm} Let $X\in  {{\boldsymbol{X}}}(3,3)$ and let $\gamma\in  \mathcal V_X^r$. The following conditions are equivalent: 
\begin{enumerate}
\item  the Verra variety $X_{\gamma}$ belongs to ${\boldsymbol{X}}(3,3)$ so that in particular  is a radical null extension of $X$\smallskip 
\item  the restriction  of $\gamma$ to a general cubic curve $C\subset X$ is an embedding and  $\gamma(C)$ is a line in  $G_r(\mathbb C^{2r})$.
\end{enumerate}
\end{thm}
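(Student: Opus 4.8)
The strategy is to transport the question through the $XJ$-correspondence and Proposition \ref{P:RadicalIdealX}: a Verra variety $X_\gamma$ lying over $X$ corresponds, when it belongs to $\overline{\boldsymbol X}(3,3)$, to a null extension $0\to I\to \mathbb J'\to \mathbb J\to 0$ of the associated Jordan algebra with $I^2=0$, and conversely any such algebraic null extension produces a Verra variety by the construction at the end of \S\ref{S:fromJ}. So the content of the theorem is really a dictionary statement: the condition on $\gamma$ of sending a general cubic $C\subset X$ to a \emph{line} in $G_r(\mathbb C^{2r})$ is precisely the geometric shadow of the null extension being \emph{through a Jordan ideal}, i.e.\ of the fiber data assembling into an actual (null) algebra extension rather than a more general OADP-extension. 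The proof thus has two implications to establish.

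\textbf{Step 1: (1) $\Rightarrow$ (2).} Suppose $X_\gamma\in\boldsymbol X(3,3)$; then by Theorem \ref{T:structure-X}(3) the projection $\pi:X_\gamma\dashrightarrow X$ is a radical null extension, so $X_\gamma=X_{\mathbb J'}$ for a rank $3$ Jordan algebra $\mathbb J'$ with $\mathbb J'/I\cong\mathbb J$, $I$ a radical ideal with $I^2=0$ (Proposition \ref{P:RadicalIdealX} and the proof of the null-extension correspondence). Using the explicit description of $\pi_{I\!\!I}$ from that proof, the fiber over a point $\tilde p_x=[1:\tilde x:\tilde x^\#:N(x)]$ of $X$ is $\overline{\{[1:x+i:x^\#+x\# i+i^\#:N(x)]\mid i\in I\}}$, and since $i^\#=i^2=0$ this is the linear space $\langle \tilde p_x\rangle\oplus\mathbb P(0\oplus I\oplus 0\oplus 0)$ translated by the bilinear term $x\# i$. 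Then I would take a general cubic $C\subset X$, lift it to the parametrization $\alpha_v$ as in the proof of Theorem \ref{T:XJ}, and compute $\gamma(\alpha_v(s:t))$: the assignment $[s:t]\mapsto$ (the $r-1$-plane in $W$ cut out of the fiber) is given, in Plücker coordinates on $\mathbb G(r-1,W)$, by an expression linear in $[s:t]$ up to the $x\# i$ cross-terms, which one checks span only a line because $I$ is an ideal (so $\mathbb J\#I\subset I$ and the cross-terms live in the fixed space $\mathbb P(I\oplus I)$ sliced linearly along $C$). Hence $\gamma|_C$ is a morphism and $\gamma(C)$ is a line in $G_r(\mathbb C^{2r})$; the generality of $C$ gives that $\gamma|_C$ is an embedding since $X_\gamma$ being nondegenerate and $3$-RC forces $\pi$ to be generically one-to-one on cubics.

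\textbf{Step 2: (2) $\Rightarrow$ (1).} Conversely, assume $\gamma|_C$ is an embedding onto a line in $G_r(\mathbb C^{2r})$ for $C\subset X$ general. The variety $X_\gamma\subset\mathbb P^{2(n+r)+1}$ is automatically an OADP-variety (Verra). One must show it is $3$-RC by twisted cubics and extremal, i.e.\ lies in $\overline{\boldsymbol X}(3,3)$, hence (by Proposition \ref{P:RadicalIdealX} again, or by verifying the $XC$-criterion via the second fundamental form as in \cite[\S5]{PR}) that it is a radical null extension of $X$. The key point: take three general points of $X_\gamma$; they lie over three general points of $X$, through which passes a cubic $C\subset X$; the condition that $\gamma(C)$ is a line in $G_r(\mathbb C^{2r})$ means that over $C$ the ruling linear spaces $\langle y,\gamma(y)\rangle$, $y\in C$, sweep out a rational normal scroll $S$ over $C$ in which there is a twisted cubic through any three prescribed points (one in each of the three fibers), because a scroll over $v_3(\mathbb P^1)$ of the shape $S_{1\ldots 13}$ or lower is $3$-RC by twisted cubics by the degenerate examples of Example \ref{EX:X(3,3)}. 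Lifting the original cubic and choosing this scroll-cubic through the three given points exhibits a twisted cubic in $X_\gamma$, so $X_\gamma$ is $3$-RC by cubics; extremality follows from a dimension count $\dim\langle X_\gamma\rangle=2(n+r)+1$, which is forced by the OADP property plus non-degeneracy. Then $X_\gamma\in\overline{\boldsymbol X}(3,3)$ and, since its radical has dimension $2r-1$ and the fibers of $\pi$ are linear (Verra) with $\pi$ null by construction, it is a radical null extension of $X$; as $X\in\boldsymbol X(3,3)$ is not a scroll, neither is $X_\gamma$, so $X_\gamma\in\boldsymbol X(3,3)$.

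\textbf{Main obstacle.} The delicate step is Step 2, specifically showing that the ``$\gamma(C)$ is a line'' condition is \emph{exactly} what makes the fiber data into a \emph{null Jordan ideal} extension and not merely an abstract OADP-extension — one has to verify that the resulting bilinear cross-terms $x\# i$ obey the Jordan-pair identities so that Proposition \ref{P:RadicalIdealX} applies, equivalently that the scroll swept over each general cubic glues consistently over all of $X$ into a single variety $3$-RC by cubics. I would handle this by working on the Jordan-pair level: the line condition translates into the statement that the candidate multiplication $V^+\times I^+\to I^+$ (read off from $\gamma$ along cubics through a fixed pair $x^+,x^-$) is the linearization $d\#$ of an adjoint, which by the converse direction of \cite{McCrimmon-axioms} (as used in Theorem \ref{P:jordanXf}) forces the Jordan identities and hence the existence of $\mathbb J'$. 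The routine computations — the Plücker-linearity check in Step 1 and the scroll/dimension bookkeeping in Step 2 — are left to the reader, as is consistent with the paper's style for results in this section.
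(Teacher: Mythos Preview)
Your Step 2 core argument --- lifting three general points of $X_\gamma$ to a twisted cubic by passing through the scroll $\pi^{-1}(C)$ over a general cubic $C\subset X$, and using that the line condition on $\gamma(C)$ forces this scroll to be $S_{1\ldots13}\subset\mathbb P^{2r+3}\in\overline{\boldsymbol X}(3,3)$ --- is exactly the paper's proof of $(2)\Rightarrow(1)$. For $(1)\Rightarrow(2)$ the paper writes only ``clearly'': your Jordan-theoretic computation is not wrong, but the direct geometric observation suffices (if $X_\gamma\in\boldsymbol X(3,3)$ then $\pi^{-1}(C)$ is an $(r+1)$-dimensional extremal variety $3$-RC by cubics in $\mathbb P^{2r+3}$, hence the scroll $S_{1\ldots13}$, which forces $\gamma(C)$ to be a line).

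Your ``Main obstacle'' is a red herring. You worry that one must check Jordan-pair identities on the cross-terms $x\# i$ in order to know the extension is through a genuine Jordan ideal, and you propose invoking \cite{McCrimmon-axioms} at the Jordan-pair level. The paper needs none of this: the argument is purely geometric. Once the scroll-lifting shows $X_\gamma$ is $3$-RC by twisted cubics and non-degenerate in $\mathbb P^{2(n+r)+1}$, it lies in $\overline{\boldsymbol X}(3,3)$ by definition; the linear projection $\pi$ from $W$ has image $X\in\boldsymbol X(3,3)$, so $W$ is automatically a radical ideal (it sits in the vertex of $\tau(X_\gamma)$ since $X_\gamma\subset\mathcal C_W(X)$), and the extension is null because the fibers are linear by the Verra construction. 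The Jordan structure on the extension then comes \emph{for free} from the $XJ$-correspondence (Theorem \ref{T:XJ}) applied to $X_\gamma$, rather than having to be built by hand. Your detour through McCrimmon's axioms would eventually close the circle, but it inverts the logic of the $XJC$-principle and is unnecessary.
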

\begin{proof}
Clearly  {\it (1)} implies {\it (2)} and we now  prove the converse. 
Let $X'=X_{\gamma}$ where $\gamma\in \mathcal V_X^r$ is such that {\it (2)} holds
and denote by $\pi:X'\dashrightarrow X$  the (restriction of the) linear projection defining $X'$ as a Verra variety over $X$. 
If $x_1',x_2'$ and $x_3'$ are three general points on $X'$ then the $x_i=
\pi(x_i')$'s are three general points on $X$.  Let $C$ be the twisted cubic included in $X$ passing trough the $x_i$'s.  Clearly, $\pi^{-1}(C)$ is nothing but the Verra variety $C_{\gamma_C}$ over $C$,  where $\gamma_C$ stands for the restriction of $\gamma$ to $C$.  Since $C$ is a general cubic in $X$, it follows from {\it (2)} that $C_{\gamma_C}$ is the $(r+1)$-dimensional rational normal scroll $S_{1\ldots 13}$ in $ \mathbb P^{2r+3}$. The later being 
an element of the class $\overline{\boldsymbol{X}}(3,3)$, there exists a twisted cubic $C'\subset C_{\gamma_C}$ passing trough $x_1',x_2'$ and $x_3'$ and such $\pi(C')=C$.  This shows that $X'=X_\gamma$ is 3-RC by cubic curves.  Then $X_\gamma$ is a radical extension of $X$. Since   the general fiber of $\pi:X_\gamma\dashrightarrow X$ is linear,  this radical extension is null by the definition, concluding the proof.
\end{proof}

The following examples show that there exist $\gamma\in  \mathcal V_X^r$ such that $X_{\gamma}\not\in\overline{\boldsymbol{X}}(3,3)$.

\begin{ex}
\label{Ex:simplext}{\rm
Let $\mathbb J$ be a rank 3 Jordan algebra of dimension $n\geq 1$ with generic norm $N(x)$. Let $\mathbb J'$ be a  split radical extension of $\mathbb J$ by a Jordan bimodule $R$ of dimension 1.

First of all, since $R^2\subset R$ and $r^3=0$ for every $r\in R$ (and since $R\subset {\rm Rad}(\mathbb J')$), it follows that $R^2=0$.  Because the extension $R\hookrightarrow \mathbb J'\twoheadrightarrow \mathbb J$ is split, one can assume that  $\mathbb J'= \mathbb J\oplus R$ with product 
given by 
\begin{equation*}
\label{eq:productJprime}
\qquad 
(x_1,r_1)*(x_2,r_2)=(x_1*x_2,r_1\varphi(x_2)+r_2\varphi(x_1)) \qquad \mbox{for }\, x_1,x_2\in \mathbb J, \, r_1,r_2\in R, 
\end{equation*}
where  $\varphi:\mathbb J\to\mathbb C$ is a certain (fixed) linear form. 
The unity of $\mathbb J'$ is $e'=(e,0)$, yielding   $\varphi(e)=1$. 

Reasoning as in the proof of Theorem \ref{T:structureBir22} and recalling that $r^2=0$ for every $r\in R'$, we deduce that
there exists also a linear form $f:\mathbb J\to\mathbb C$ such that
$(x,r)^\#=(x^\#, rf(x))$ for every $(x,r)\in \mathbb J'$. 
By definition of radical  we have $N(x')=N(x,r)=N(x)$ for $x'=(x,r)\in \mathbb J'$ so that the identity $N(x')x'=({x'}^\#)^\#$ is equivalent to
\begin{equation}\label{eq:structurenormJprime}
N(x)=f(x)f(x^\#)
\end{equation}
for every $x\in\mathbb J$.

Now if $X_{\mathbb J}\subset\p^{2n+1}$ is a twisted cubic over a simple rank three Jordan algebra $\mathbb J$ (so with  $n\in \{6,9,15,27\}$) 
 and if $Z^{n+1}\subset\p^{2n+3}$ is obtained from $X_{\mathbb J}$
via Verra construction, then $Z\not\in\overline{\boldsymbol{X}}^{n+1}(3,3)$.
Indeed, otherwise $Z$ would be projectively equivalent to $X_{\mathbb J'}$ 
for a certain 1-dimensional null radical extension ${\mathbb J'}$  of $\mathbb J$, that is necessarily split (this follows from the fact that $\mathbb J$ is simple).  Then \eqref{eq:structurenormJprime} would imply that the norm $N(x)$ of $\mathbb J$ is a reducible polynomial of degree 3,
which is not the case. 
}
\end{ex}

Part  {\it (3)} of   Theorem \ref{T:structure-X} points out that 
among radical null extensions the split ones  are the most interesting. 
 Accordingly to the general principle of the XJC-correspondence, given $X\in \boldsymbol{X}(3,3)$, it would be interesting to get a characterization in geometric terms of the rational maps $\gamma\in  \mathcal V_X^r$ satisfying condition {\it (2)} of the preceding theorem and such that the radical extension $X_\gamma\dashrightarrow X$ is not only null but also split.  We  intend to return on this and on other related questions in the future.

\end{document}